\newtheorem{theorem}{Theorem}[section]
\newtheorem{lemma}[theorem]{Lemma}
\newtheorem{proposition}[theorem]{Proposition}
\newtheorem{corollary}[theorem]{Corollary}
\newtheorem{conjecture}[theorem]{Conjecture}
\theoremstyle{definition}
\newtheorem{definition}[theorem]{Definition}
\newtheorem{definition-lemma}[theorem]{Definition/Lemma}
\newtheorem{remark}[theorem]{Remark}
\newtheorem{example}[theorem]{Example}
\newcommand{\op}[1]{\operatorname{#1}}
\newcommand{\leftexp}[2]{{\vphantom{#2}}^{#1}{#2}}
\newcommand*{\DashedArrow}[1][]{\mathbin{\tikz [baseline=-0.25ex,-latex, dashed,#1] \draw [#1] (0pt,0.5ex) -- (1.3em,0.5ex);}}%
\newcommand{\dbcoh}[1]{\operatorname{D}^{\operatorname{b}}(\operatorname{coh }#1)}
\newcommand{\sidenote}[1]{}
\def\Z{\mathbb{Z}}
\def\Q{\mathop{\mathbb{Q}}}
\def\O{\mathcal{O}}
\def\P{\mathbb{P}}
\def\tand{\text{ and } }
\def\PP{{\mathbb P}}
\def\CC{{\mathbb C}}
\def\PP{{\mathbb P}}
\def\QQ{{\mathbb Q}}
\def\ZZ{{\mathbb Z}}
\def\cO{\mathcal{O}}
\def\cJ{\mathcal{J}}
\def\cO{\mathcal{O}}
\def\cX{\mathcal{X}}
\def\cG{\mathcal{G}}
\def\cY{\mathcal{Y}}
\def\cF{\mathcal{F}}
\def\cU{\mathcal{U}}
\def\cH{\mathcal{H}}
\begin{document}

\title{On the Griffiths groups of Fano manifolds of Calabi-Yau Hodge type}

\author[Favero]{David Favero}
\address{
  \begin{tabular}{l}
   David Favero \\
      \hspace{.1in} Universit\"at Wien, Fakult\"at f\"ur Mathematik\\
   \hspace{.1in}   Garnisongasse 3/14 O3.49a, Wien, \"Osterreich \\
   \hspace{.1in} Email: {\bf favero@gmail.com} \\
  \end{tabular}
}

\author[Iliev]{Atanas Iliev}
\address{
  \begin{tabular}{l}
   Atanas Iliev  \\ 
   \hspace{.1in} Department of Mathematics, Seoul National University \\
   \hspace{.1in} Seoul 151-747, Korea \\
   \hspace{.1in} Email: {\bf ailiev@snu.ac.kr} \\
  \end{tabular}
}

\author[Katzarkov]{Ludmil Katzarkov}
\address{
  \begin{tabular}{l}
   Ludmil Katzarkov \\
   \hspace{.1in} University of Miami, Department of Mathematics\\ 
   \hspace{.1in} PO Box 249085, Coral Gables, FL 33124-4250 USA \\
      \hspace{.1in} Universit\"at Wien, Fakult\"at f\"ur Mathematik \\
   \hspace{.1in}   Garnisongasse 3/14 O3.52, Wien, \"Osterreich \\
   \hspace{.1in} Email: {\bf lkatzark@math.uci.edu} \\
  \end{tabular}
}

\maketitle

\begin{center}
Dedicated to the memory of our friend A. Todorov
\vspace{12pt}
\end{center}

\begin{abstract}
A deep result of Voisin asserts that the Griffiths group of a general non-rigid Calabi-Yau (CY) 3-fold is infinitely generated.  This theorem builds on an earlier method of hers which was implemented by Albano and Collino to prove the same result for a general cubic sevenfold.  In fact, Voisin's method can be utilized precisely because the variation of Hodge structure on a cubic 7-fold behaves just like the variation of Hodge structure of a Calabi-Yau 3-fold.  We explain this relationship concretely using Kontsevitch's noncommutative geometry.  Namely, we show that for a cubic 7-fold, there is a noncommutative CY 3-fold which has an isomorphic Griffiths group.  

Similarly, one can consider other examples of Fano manifolds with with the same type 
of variation of Hodge structure as a Calabi-Yau threefold (FCYs). 
Among the complete intersections in weighted projective spaces, there 
are only three classes of smooth FCY manifolds; the cubic 7-fold $X_3$,
the fivefold quartic double solid $X_4$, and the fivefold intersection 
of a quadric and a cubic $X_{2.3}$.  We settle the two remaining cases, following Voisin's method to demonstrate that the Griffiths group for a general
complete intersection FCY manifolds, $X_4$ and $X_{2.3}$, is also infinitely generated. 

 In the case of $X_4$, we also show that there is a noncommutative CY 3-fold with an isomorphic Griffiths group.  Finally, for $X_{2.3}$ there is a noncommutative CY 3-fold, $\mathcal B$, such that the Griffiths group of $X_{2.3}$ surjects on to the Griffiths group of $\mathcal B$.  We finish by discussing some examples of noncommutative covers which relate our noncommutative CYs back to honest algebraic varieties such as products of elliptic curves and $K3$-surfaces.
\end{abstract}

\section{Introduction}
A fundamental approach to studying subvarieties of an algebraic variety, $X$,  is through the Chow ring, i.e., the ring of all algebraic cycles on $X$ up to rational equivalence with product given by the intersection pairing.  Then again, one can also study this ring up to algebraic equivalence, or homological equivalence for that matter.  One might wonder; what is the difference between these different types of equivalence? 

Well to compare, e.g., algebraic and homological equivalence we may simply study their difference, i.e., the group of algebraic cycles homologically equivalent to zero modulo algebraic equivalence.  This is called the Griffiths group.  The name and the notation for ${\rm Griff}^p(X)$ come from an example due to Griffiths \cite{Gr}, where he famously, ``put an end to the
belief that algebraic and homological equivalence of algebraic cycles 
might coincide.''\footnote{
see  S. Zucker's review of \cite{Cl}, MR720930.}

Griffiths' example was the quintic hypersurface.  Specifically, he showed that for a general quintic hypersurface,  the Griffiths group corresponding to $2$-cycles, ${\rm Griff}^2(X)$, is nonzero.
Moreover, Griffiths demonstrated that  ${\rm Griff}^2(X)$ has an element 
of infinite order.  This example was amplified by Clemens in \cite{Cl}, who showed that the rational Griffiths group, 
${\rm Griff}^2_{\QQ}(X)$
of these quintics has (countably) infinite dimension 
as a vector space over ${\QQ}$.

In a major advance \cite{Voi1}, Voisin reenvisioned Griffiths' example in a much more structured context.  Indeed, in loc.~cit. she recovers Clemens' amplification by providing a general way for both producing non-trivial algebraic cycles and showing that they are linearly independent in the rational Griffiths group.
This general philosophy led to seminal work  in \cite{Voi2} where Voisin proves that such a result is true for a general non-rigid 
Calabi-Yau threefold, i.e., ${\rm Griff}^2_{\QQ}(X)$ is a countably infinite vector space over $\Q$ for such threefolds.

Another example with nontrivial
Griffiths group was provided by Ceresa \cite{Ce}, who proved that 
if $C$ is a generic curve of genus $\ge 3$ embedded by the Abel-Jacobi 
map in its Jacobian, $J(C)$, and if $C^-$ is the image of $C \subseteq J(X)$ 
after the multiplication by $-1$,  
then $C - C^-$ is an element of infinite order in ${\rm Griff}^{g-1}(X)$.
In the special case when $g = 3$ (so that $X = J(C)$ is of dimension 3), 
Nori \cite{No} showed that, once again, ${\rm Griff}^2(X),$ is of infinite dimension over $\QQ$.\footnote{
see also \cite{Scho} for an example of a 3-fold, $X$, with $K_X = 0$ 
and ${\rm Griff}(X)$ of infinite rank over a field $k \not= {\CC}$.
}

A common theme among these examples is that the variety, $X$, has trivial canonical bundle, 
${\rm dim}(X) = n$ is odd the nontrivial Griffiths group which appears is the ``middle'' Griffiths group: $ {\rm Griff}^{\frac{n-1}{2}}(X)$.
On the other hand, utilizing Voisin's method in \cite{Voi1}, Albano and Collino demonstrated that the general cubic $7$-fold, $X_3$, has infinitely generated ${\rm Griff}^{3}(X_3)$ \cite{AC}.  Meanwhile, in \cite{No}, M. Nori constructs a class of Fano varieties with non-trivial (non-middle) 
Griffiths groups ${\rm Griff}^p_{\QQ}(X)$.  

While the example of Albano and Collino is seven as opposed to three-dimensional and Fano as opposed to Calabi-Yau, it actually bears remarkable homological and Hodge-theoretic resemblance to a Calabi-Yau threefold.  
Indeed, as early as the 1980's the cubic sevenfold $X$ was regarded in the physics literature as a mirror of a rigid Calabi-Yau 
threefold with large Picard group, see p. 58-60 in \cite{CHSW}.
In particular, the cubic 7-fold $X$ has a variation of Hodge structure
 (v.H.s.)
similar to that of a non-rigid Calabi-Yau threefold
(see e.g. \cite{CDP}).


One can ask whether there are other examples of higher-dimensional 
manifolds with a Hodge variation similar to that of a  Calabi-Yau 
threefolds.  The answer is a resounding yes.\footnote{The second named author is grateful to Maximilian Kreuzer, who presented him with a list of $3284$ 
such hypersurfaces in weighted projective spaces.}  Concretely, one can define the notion of a manifold of Calabi-Yau type and see such manifolds
manifest as Fano complete intersections in weighted projective space.  However, the restriction of being smooth yields 
only three projective families \cite{Schi, CDP, BBVW,IM}:  
\begin{itemize}

\item  the smooth cubic 7-folds $X_3 \subseteq \PP^8$,

\item the smooth hypersurfaces $X_4$ 
   of degree 4 in the weighted  projective space $\PP^6(1^6;2) = \PP^6(1:1:1:1:1:1:2)$, and 
                  
\item the smooth complete intersections $X_{2.3} \subseteq \PP^7$ 
                       of a quadric and a cubic.

\end{itemize}

Our approach to the study of Griffiths groups is therefore twofold.  First, like Albano and Collino, we employ Voisin's method in \cite{Voi1} to settle the two remaining cases and demonstrate that the rational Griffiths group is infinitely generated for all of the cases above.  Second, in these examples, the Hodge-theoretic comparisons can be categorified by comparing the bounded derived categories of coherent sheaves on these spaces to that of 3-dimensional Calabi-Yau category, or in the language of Kontsevitch, a 3-dimensional Calabi-Yau noncommutative space.   Therefore, we concretely tie these families back to Voisin's theorem by abstracting the situation to the noncommutative setting.  This culminates in the following result:

\begin{theorem}
Suppose $X$ is a smooth Fano-Calabi-Yau complete intersection in weighted projective space.  There is a noncommutative space, $\mathcal A$, and an isomorphism of Griffiths groups,
\[
 {\rm Griff}_{\QQ}(X) =  {\rm Griff}_{\QQ}(\mathcal A).
\]
If $X$ is sufficiently general, then $ {\rm Griff}_{\QQ}(X) =  {\rm Griff}_{\QQ}(\mathcal A)$ is a countably infinite vector space over $\QQ$.    Furthermore, when $X$ is a cubic $7$-fold or a hypersurface of degree $4$ in $\PP^6(1^6;2)$, then  $\mathcal A$ is a $3$-dimensional Calabi-Yau.
In the final case, when $X$ is a smooth complete intersection of a quadric and a cubic  in $\P^7$, there is another $3$-dimensional Calabi-Yau noncommutative space, $\mathcal B$ which is a localization of $\mathcal A$ and   ${\rm Griff}_{\QQ}(X)  =  {\rm Griff}_{\QQ}(\mathcal A)$ surjects onto  $ {\rm Griff}_{\QQ}(\mathcal B)$. 
 \end{theorem}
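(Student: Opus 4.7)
The plan is to reduce the entire statement to a noncommutative incarnation of Voisin's infinite-generation argument. In each of the three cases one expects a Kuznetsov-type semiorthogonal decomposition
\[
\dbcoh{X} \;=\; \langle \mathcal{A},\, \mathcal{E}_1, \ldots, \mathcal{E}_r\rangle,
\]
where $\mathcal{E}_1, \ldots, \mathcal{E}_r$ is an exceptional collection and $\mathcal{A}$ is the noncommutative space appearing in the statement. For $X_3$ one takes $\mathcal{E}_i = \mathcal{O}_{X_3}(i)$, $i = 0, \ldots, 5$; for the weighted hypersurface $X_4$ one uses Orlov's equivalence between graded matrix factorizations of the defining quartic and $\dbcoh{X_4}$, peeling off the twists of $\mathcal{O}_{X_4}$; for $X_{2.3}$ one applies Kuznetsov's decomposition for complete intersections of a quadric and cubic. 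In cases (1) and (2), a direct Serre-functor computation using $S_X = (- \otimes \omega_X)[\dim X]$ together with the mutation formulas yields $S_{\mathcal{A}} = [3]$, so $\mathcal{A}$ is a $3$-Calabi-Yau noncommutative space in Kontsevich's sense. In case (3), the naive complement $\mathcal{A}$ carries an extra contribution from the spinor/exceptional component of the ambient quadric; Verdier-localizing this contribution produces the subcategory $\mathcal{B}$ with $S_{\mathcal{B}} = [3]$.

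For the equality of Griffiths groups, I use that the rational Griffiths group of a smooth proper noncommutative space is built from the Chern character into Hochschild (or negative cyclic) homology modulo algebraic equivalence, and that both the numerator and denominator are additive along semiorthogonal decompositions. Each exceptional object contributes only an algebraically trivial class, so the inclusion $\mathcal{A} \hookrightarrow \dbcoh{X}$ induces ${\rm Griff}_{\QQ}(X) = {\rm Griff}_{\QQ}(\mathcal{A})$. For $X_{2.3}$, the projection $\mathcal{A} \to \mathcal{B}$ arising from the localization is surjective on the Hochschild homology piece that hosts the middle Abel-Jacobi map, producing the asserted surjection ${\rm Griff}_{\QQ}(X) = {\rm Griff}_{\QQ}(\mathcal{A}) \twoheadrightarrow {\rm Griff}_{\QQ}(\mathcal{B})$; the possible lack of injectivity is exactly because the localized-out piece could detect additional classes.

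To prove that ${\rm Griff}_{\QQ}(X)$ is countably infinite-dimensional for sufficiently general $X$ in each family, I would follow Voisin's method \cite{Voi1,Voi2}, as adapted by Albano-Collino \cite{AC} for the cubic sevenfold: sweep a pencil of linear sections through $X$, form the associated normal function on the pencil, and compute its infinitesimal invariant through a Griffiths--Koszul complex built from the Hodge filtration of the variation. Because the variations on $X_3$, $X_4$, and $X_{2.3}$ are all of Calabi-Yau $3$-fold type, this Koszul complex is formally identical to the one used in \cite{Voi2}, and the infinite-generation reduces to a Jacobi-ideal nonvanishing statement that can be checked case by case.

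The main obstacle I anticipate is case (3). Here the semiorthogonal decomposition naturally produces an $\mathcal{A}$ which is not $3$-CY on the nose, and one must identify the correct Verdier localization $\mathcal{B}$, verify that its Serre functor is indeed $[3]$, and track precisely how Voisin's cycles on $X_{2.3}$ descend through both the semiorthogonal decomposition and the subsequent localization. Doing this requires a careful comparison of the noncommutative Hodge structures on $X$, $\mathcal{A}$, and $\mathcal{B}$, and is what makes the map to ${\rm Griff}_{\QQ}(\mathcal{B})$ merely surjective rather than an isomorphism.
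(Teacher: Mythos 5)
Your overall architecture --- peel off an exceptional collection, use additivity of the Griffiths group along semiorthogonal decompositions, identify the residual category as a $3$-Calabi-Yau, and quote Voisin's method for infinite generation --- is indeed the paper's architecture, but several individual steps are wrong or unsupported as written. First, your justification for discarding the exceptional objects is backwards: if the class of an exceptional object $E$ were algebraically trivial it would a fortiori be homologically trivial, and $\langle E\rangle$ would then contribute a copy of $\QQ$ to the Griffiths group rather than nothing. The correct argument (Lemma~\ref{lem: exceptional Griffiths}) is that $\chi(E,E)=1$ together with Grothendieck--Riemann--Roch forces $\op{ch}(E)\cdot\op{td}(X)^{1/2}\neq 0$ in cohomology, so the cycle class map is \emph{injective} on $\op{K}_0(\langle E\rangle)\otimes\QQ\cong\QQ$ and its kernel --- the Griffiths group --- vanishes. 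Second, the Serre functor of the complement is not obtained from mutation formulas, and it is not clear how you would carry that computation out; instead $\mathcal A$ is identified via Orlov's theorem with the graded singularity category $\op{D}^{\op{gr}}_{\op{sg}}(A)$, whose Serre functor is $(-a)[n-1]$ with $a$ the Gorenstein parameter, and the matrix-factorization periodicity $[2]\cong(\op{deg}(w))$ converts this into $[3]$ in the two hypersurface cases.

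The most serious gap is case (3). The category $\mathcal B$ is not a ``spinor component of the ambient quadric,'' nor can it be simultaneously a subcategory and a Verdier localization as your sketch suggests; it is Positselski's category $\op{D}^{\op{abs}}[\mathsf{Fact}(R/f,\Z,g)]$, a genuine Verdier \emph{quotient} of $\mathcal A=\op{D}^{\op{gr}}_{\op{sg}}(R/(f,g))$, and the surjection ${\rm Griff}_{\QQ}(\mathcal A)\twoheadrightarrow{\rm Griff}_{\QQ}(\mathcal B)$ is immediate from the paper's definition of the Griffiths group of a Verdier quotient as the corresponding quotient of Griffiths groups; no Hochschild-homology argument is needed or available. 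Its $3$-CY property again comes from $[2]\cong(3)$ together with the Gorenstein parameter $6$ of $R/f$. Finally, your description of the infinite-generation step (``sweep a pencil of linear sections'') misstates Voisin's method: the normal functions live over the moduli space of $X$ and arise from Hodge classes on Noether--Lefschetz-special divisors $Y$, with $Y\subseteq X$ for $X_4$ but $Y\supseteq X$ (a cubic $6$-fold containing $X_{2.3}$) in the last case --- the latter requiring the paper's new device of passing to the associated nodal cubic $7$-fold in order to express the infinitesimal invariant as multiplication in a graded Jacobian ring. That explicit verification on Fermat models occupies Sections 4 and 5 and cannot simply be outsourced to \cite{Voi2}.
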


For certain families, the noncommutative CY 3-folds appearing above can be related back to algebraic varieties with trivial canonical class by utilizing the categorical covering picture from \cite{BFK11}.  For example, if $f(x_0,x_1,x_2), g(x_3,x_4,x_5), h(x_6,x_7,x_8)$ all define smooth elliptic curves, $E_1, E_2, E_3$ respectively, then the product $E_1 \times E_2 \times E_3$ is a $\Z_3 \times \Z_3$-cover of the noncommutative CY 3-fold corresponding to the cubic sevenfold, $X_3$,  defined by $f+g+h$.  From this we are able to obtain an isomorphism:
\[
 {\rm Griff}_{\QQ}(E_1 \times E_2 \times E_3)^{\Z_3 \times \Z_3} \cong {\rm Griff}_{\QQ}(X_3)^{\Z_3 \times \Z_3},
\]
where the ${\Z_3 \times \Z_3}$-action on ${\rm Griff}_{\QQ}(E_1 \times E_2 \times E_3)$ is determined by a correspondence on $E_1 \times E_2 \times E_3 \times E_1 \times E_2 \times E_3$ and the ${\Z_3 \times \Z_3}$-action on ${\rm Griff}_{\QQ}(X_3)$ comes from an easily described subgroup of $\op{PGL}(9)$.

This paper is organized as follows.  In \S\ref{sec: background}, we gather some basic definitions and discuss their relevance in the literature.  In \S\ref{sec: deformations}, we describe to necessary background to implement Voisin's method.  In \S\ref{x4}, we use this method to prove that the Griffiths group for the general smooth hypersurface $X_4 \subseteq \PP^6(1^6;2)$ is infinitely generated.  Similarly in \S\ref{x23}, we do the same for $X_{2.3}$, the general intersection of a quadric and cubic in $\PP^7$.  We then pass to the categorical portion of the paper where, in \S\ref{sec: admissible}, we extend the notion of Griffiths groups to certain types of noncommutative spaces and show that this notion behaves well with respect to semi-orthogonal decompositions.  In \S\ref{sec: CY and FCY}, we apply this formalism to the examples in the above list and compare each of these examples with the Griffiths group of a noncommutative CY 3-fold.  Finally, in \S\ref{sec: categorical covers}, we implement the categorical covering picture in \cite{BFK11}, to establish a connection between these noncommutative CY 3-folds and certain 3-dimensional algebraic varieties with trivial canonical-class.

\vspace{2.5mm}
\noindent \textbf{Acknowledgments:}
The authors owe their sincere gratitude to Pranav Pandit, Maximillian Kreuzer, Matthew Ballard, Bertrand T\"oen, and Maxim Kontsevich for stimulating and extremely useful conversations and would like to thank them all  for their time, patience, and insight. 
The first and third named authors were funded by NSF DMS 0854977 FRG, NSF DMS 0600800, NSF DMS 0652633 FRG, NSF DMS 0854977, NSF DMS 0901330, FWF P 24572 N25, by FWF P20778 and by an ERC Grant.
\vspace{2.5mm}


 \section{Background} \label{sec: background}

\subsection{Algebraic cycles and Griffiths groups}
Let $X$ be a non-singular variety over an algebraically closed field $k$.  Unless otherwise stated,  we assume that $k = {\CC}$. 
Let $Z^p(X)$ be the free Abelian group generated by the irreducible subvarieties 
on $X$ of codimension $p$. 
Let $Z^p_{rat}(X)$ be the set of all cycles, $z \in Z^p(X)$, rationally equivalent to zero,
let $Z^p_{alg}(X)$ be the set of all  
cycles, $z \in Z^p(X)$, algebraically equivalent to $0$, and let $Z^p_{hom}(X)$ be the set of of all cycles, $z \in Z^p(X)$, 
homologically equivalent to $0$.  We have containments,
\[
Z^p_{rat}(X) \subseteq Z^p_{alg}(X) \subseteq Z^p_{hom}(X).
\]
The $p$-th Chow group,
\[
CH^p(X) := Z^p(X) / Z^p_{rat}(X),
\]
 is the quotient group of $Z^p(X)$ 
by rational equivalence of algebraic cycles on $X$.  For ${\rm dim}(X) = n$, 
one can equivalently use the notation $Z_p(X) = Z^{n-p}(X)$ and 
$CH_p(X) = CH^{n-p}(X).$

Similarly let us define the notation,
\[
CH^p_{alg}(X) := Z^p_{alg}(X)/ Z^p_{rat}(X) 
\]
and 
\[
CH^p_{hom}(X) := Z^p_{hom}(X)/ Z^p_{rat}(X). 
\]
The group, $CH^p_{hom}(X)$, can alternatively be described as  the kernel of the cycle class 
map,
\[
\alpha : CH^p(X) \rightarrow H^{2p}(X, {\bf Z}).
\] 
Meanwhile, the subgroup, 
\[
CH^p_{alg}(X) \subseteq CH^p_{hom}(X),
\]
 is a divisible algebraic group.

\begin{definition} The $p$-th \textbf{Griffiths group} of $X$ is the quotient,
\[
{\rm Griff}^p(X) = CH^p_{hom}(X)/CH^p_{alg}(X).
\]
\end{definition}
\noindent  Note that ${\rm Griff}^n(X) = {\rm Griff}^1(X) = 0$. 

The Griffiths group of $X$ can also be realized as
\[
Z^p_{hom}(X) / Z^p_{alg}(X)
\]
which is the kernel of the cycle class homomorphism
\[
c_{alg} : Z^p(X) /  Z^p_{alg}(X) \to H^{2p}(X, \Z)
\]

Let $\op{K}_0(X)$ denote the Grothendieck group of algebraic vector bundles on $X$ and $\op{K}_0^{sst}(X)$ denote the Grothendieck group of algebraic vector bundles modulo algebraic equivalence.
The Chern character map induces a rational isomorphism,
 \begin{equation}
 \op{ch}: \op{K}_0(X) \otimes \Q \to CH^*(X) \otimes \Q,
 \end{equation}
which preserves algebraic equivalence and yields,
\begin{equation} \label{eq: semitopological K-theory}
 \op{ch}_{sst}: \op{K}_0^{sst}(X) \otimes \Q \to Z^p(X) /  Z^p_{alg}(X) \otimes \Q.
 \end{equation}

Hence, the total rational Griffiths group,
\[
{\rm Griff}_{\QQ}(X) := \bigoplus_{p=0}^{\op{dim}(X)} {\rm Griff}^p(X) \otimes \QQ,
\]
is isomorphic to,
\[
\op{ker}(c_{alg} \circ \op{ch}_{sst}).
\]

This will be the starting point for our categorical definition of the total rational Griffiths group.  
Namely, in \S\ref{sec: admissible}, we show that for an admissible subcategory of the bounded derived category of coherent sheaves on $X$,  $\mathcal A \subseteq \dbcoh{X}$,  we can restrict the map $c \circ \op{ch}_{sst}$ to $\op{K}_0^{sst}(\mathcal A)$ and define ${\rm Griff}_{\QQ}(\mathcal A)$ as the kernel of this restriction.







\medskip

\subsubsection{\bf Manifolds of Calabi-Yau type}
\begin{definition}
Let $X$ be a smooth compact complex  variety of odd dimension $2n+1$, $n\ge 1$. 
We call $X$ a {\it generalized Calabi-Yau manifold} if
\begin{enumerate}
\item
the middle Hodge structure is similar to that of a Calabi-Yau threefold, 
i.e.: 
$$h^{n+2,n-1}(X)=1, \qquad and \quad  h^{n+p+1,n-p}(X)=0 \;\;for\;p\ge 2;$$
\item
for any generator $\omega\in H^{n+2,n-1}(X) \cong \CC$, the contraction map 
$$H^1(X,TX)\stackrel{\omega}{\longrightarrow} H^{n-1}(X,\Omega_X^{n+1})$$
is an isomorphism; 
\item
the Hodge numbers $h^{k,0}(X), k = 1,2,...,2n$ are zero. 
\end{enumerate}
\end{definition}
Notice that for $n = 1$ the above definition coincides with the 
definition of a Calabi-Yau threefold.  Also, recent joint work of Manivel and the second author \cite{IM} provides a series of examples of 
manifolds of Calabi-Yau type of dimension $>3$.  These examples come from certain complete intersections in homogeneous varieties, 
starting from hypersurfaces in projective spaces.

The starting point for this definition is the following property described by Donagi and Markman,
which holds for all Calabi-Yau threefolds (see \cite{DM1},\cite{DM2}): 

\medskip

\noindent
{\bf (DM)}  
{\sl The relative intermediate Jacobian forms an integrable system over the gauged\\ 
${}$\hspace{1.3cm} moduli space of any Calabi-Yau threefold.}

\medskip

As remarked in \cite{DM2}, this property cannot 
be generalized for Calabi-Yau varieties $X$ of higher dimension $n \ge 4$.
Indeed, an intermediate Jacobian exists 
only for manifolds of odd dimension, so (DM) cannot even be stated 
correctly for Calabi-Yau manifolds of even dimension.  
On the other hand, in the case when $n > 3$ is odd,
 there is still a natural 2-form on the relative intermediate Jacobian,
$\sigma$, inherited by the cotangent fibration over the moduli space of $X$.
However, while the fibers of the relative intermediate Jacobian are still 
isotropic with respect to $\sigma$, the Yukawa cubic 
(or the Donagi-Markman cubic) on the tangent fibration over the moduli space 
of $X$ vanishes.  Hence $\sigma$ is degenerate over the 
general point, see Remark 7.8 and Theorem 7.9 in \cite{DM2}.    
In contrast, on the gauged moduli spaces of the generalized Calabi-Yau 
(2n+1)-folds as above the relative intermediate Jacobian can form  
an integrable system, see \cite{IM}.  

All known examples of manifolds of Calabi-Yau type are Fano.
Hence, we introduce the following terminology: 
\begin{definition}
A {\it Fano-Calabi-Yau (FCY) manifold} 
is a manifold $X$ of Calabi-Yau type that is Fano, 
i.e., one for which the anticanonical class, $-K_X$, is ample. 
\end{definition}

FCY manifolds are always of odd dimension at least $5$. 
Another common property of FCY manifolds  
and Calabi-Yau manifolds is that the deformations 
of Fano manifolds are not obstructed, see e.g. \cite{Ran}.
Unobsructedness of deformation spaces
is an important property of Calabi-Yau manifolds 
known as the Bogomolov-Tian-Todorov theorem,
see e.g. \cite{Voi4}.   
As a consequence, one can speak about moduli spaces $\cX$ of FCY manifolds $X$ 
and relative Jacobians $\cJ(\cX) \rightarrow \cX$ 
above them.  

\bigskip

\section{Deformations of triples, Noether-Lefschetz loci, and infinitesimal invariants} \label{sec: deformations}

\subsection{Deformations of triples $(\lambda,Y,X)$ and Noether-Lefschetz loci}\label{lyx}
For a FCY manifold $X \subseteq \PP^N$ of dimension $2n+1$,  
denote by $\cX$ its deformation space. Suppose for 
simplicity that ${\rm Pic}\ X = \ZZ H$ where 
$H$ is the hyperplane section. In particular
$-K_X = rH$ for some integer $r = r(X) > 0$, i.e. 
$X$ is a prime Fano manifold of index $r$;  
and suppose in addition that the index $r(X) \ge 2$.
Fix a positive integer $d < r(X)$. Then the smooth 
divisors $Y \in |\cO_X(d)|$ will be Fano manifolds, 
and denote by $\cY$ their deformation space.
\footnote{This deformation space exists by result of Z. Ran \cite{Ran}.}
Let 
\begin{equation}\label{projections}
\cY \stackrel{q}\longleftarrow \cG = \{ (Y,X): Y\subseteq X \} \stackrel{p}\longrightarrow \cX
\end{equation}
with $q$ and $p$ being the restrictions of the natural projections 
of $\cY \times \cX \supseteq \cG$ to $\cY$ and $\cX$. 
Consider the following diagram 

\begin{picture}(100,80)
\put(12,18){\makebox(0,0){0}}
    \put(16,20){\vector(2,1){10}}
\put(10,30){\makebox(0,0){0}}
    \put(15,30){\vector(1,0){10}}
\put(35,30){\makebox(0,0){$T_Y(-X)$}}
    \put(47,30){\vector(1,0){16}}
    \put(44,32){\vector(2,1){10}}
\put(50,50){\makebox(0,0){0}}
    \put(52,48){\vector(1,-1){04}}
\put(62,40){\makebox(0,0){$T_{Y\subseteq X}$}}
    \put(70,43){\vector(2,1){16}}
    \put(66,37){\vector(1,-1){04}}
\put(73,30){\makebox(0,0){$T_X$}}
    \put(78,30){\vector(1,0){10}}
    \put(78,28){\vector(1,-1){10}}
         \put(83,32){\makebox(0,0){\scriptsize $\alpha$}}
\put(95,00){\makebox(0,0){0}}
\put(95,15){\makebox(0,0){$N_{Y|X}$}}
    \put(95,10){\vector(0,-1){06}}
    \put(99,12){\vector(1,-1){06}}
\put(95,30){\makebox(0,0){$T_X|_Y$}}
    \put(102,30){\vector(1,0){10}}
    \put(95,25){\vector(0,-1){06}}
    \put(98,22){\makebox(0,0){\scriptsize $\gamma$}}
\put(95,55){\makebox(0,0){$T_Y$}}
     \put(95,50){\vector(0,-1){14}}
     \put(99,57){\vector(2,1){10}}
         \put(98,43){\makebox(0,0){\scriptsize $\beta$}}
\put(95,72){\makebox(0,0){0}}
    \put(95,67){\vector(0,-1){06}}
\put(108,03){\makebox(0,0){0}}
\put(117,30){\makebox(0,0){0}}
\put(113,63){\makebox(0,0){0}}
\end{picture}

\bigskip

\noindent
in which the vertical and the horizontal row are 
tangent sequence for $Y \subseteq X$ and the adjoint 
sequence for $Y \subseteq X$ twisted by $T_Y$, 
and $T_{Y\subseteq X}$ is the kernel of the composition 
map $\gamma \circ \alpha$. Under the particular assumptions 
as above, there is an identification of the normal bundle,  $N_{Y|X} = \cO_Y(d)$.

Let $\lambda \in H^{n,n}_o(Y,\ZZ) \subseteq H^{2n}(Y)$ be the class 
of a primitive integer $n$-cycle $Z_\lambda$ on $Y$. 
The class $\lambda$ determines locally around $(Y,X) = (Y_o,X_o)$ 
a family $\cF_{\lambda} \subseteq \cG$  
defined by all local deformations $(Y_t,X_t)$ of $(Y,X)$ 
inside $\cG$ for which the class $\lambda \in H^{2n}(Y_t) = H^{2n}(Y)$ 
remains of type $(n,n)$. 

By definition, the Noether-Lefschetz locus $\cF \subseteq \cY$ 
is the set of all $Y$ for which the primitive integer cohomology 
$H^{n,n}_o(Y,\ZZ) \not= 0$;
in particular $\cF_{\lambda}$ is a component of $\cF$.

For fixed $X$, we define the Noether-Lefschetz locus inside $|\cO_X(d)|$,
$\cF(X)$, to be the set of all $Y \in |\cO_X(d)|$ 
that belong to $\cF$; and let 
$\cF(X)_{\lambda} \subseteq \cF(X)$ to be the set of all 
$Y \in |\cO_X(d)|$ that belong to $\cF_{\lambda}$.  

For a given triple, $(\lambda,Y,X)$, 
let $T\cF_\lambda$ be the tangent space to 
$\cF_\lambda \subseteq \cY \times \cX$ 
at $(Y,X) = (Y_o,X_o)$. 
Suppose in addition that the Kodaira-Spencer map 
$$
\rho: H^0(N_{Y|X}) \rightarrow H^1(T_Y)
$$
is injective. 
By \S 1 of \cite{Voi1} 
$$
T\cF_\lambda = \{ (v,u) \in H^1(T_Y) \times H^1(T_X):  \beta(v) = \alpha (u) 
\ \mbox{ and } \ u \bullet \lambda^{n,n} = 0 \}, 
$$ 
where \ $\bullet$ \ is the cup-product 
$$
H^1(T_Y) \otimes H^n(\Omega^n_Y) \rightarrow H^{n+1}(\Omega^{n-1}_Y), \ 
(u,\lambda) \mapsto u \bullet \lambda .
$$
Now consider the following diagram:

\begin{picture}(136,40)


\put(25,30){\makebox(0,0){$H^0(N_{Y|X})$}}
\put(36,30){\vector(1,0){14}}
\put(60,30){\makebox(0,0){$H^1(T_Y)$}}
\put(68,30){\vector(1,0){10}}
\put(88,30){\makebox(0,0){$H^1(T_Y|_X)$}}
\put(99,30){\vector(1,0){6}}
\put(116,30){\makebox(0,0){$H^1(N_{Y|X})$}}

\put(57,10){\makebox(0,0){$H^{n+1}(\Omega^{n-1}_Y)$}}

\put(57,27){\vector(0,-1){13}}
\put(33,27){\vector(1,-1){13}}

\put(32,20){\makebox(0,0){{\scriptsize $\lambda^{n,n}\circ \rho$}}}
\put(62,20){\makebox(0,0){{\scriptsize $\lambda^{n,n}$}}}
\put(43,32){\makebox(0,0){{\scriptsize $\rho$}}}

\end{picture}

\noindent
The assumptions on $X$ and $Y$ guarantee that
\[
H^1(N_{Y|X}) = H^1(\cO_Y(d)) = H^1(K_Y \otimes \cO(r)) = 0
\]
by the Kodaira vanishing theorem.
Therefore, we can apply the argument in \S 2 from \cite{AC} to conclude:

\begin{proposition}\label{lyx-nl}
Let $(\lambda,Y,X)$ be as above, let 
$p_*: T\cF_{\lambda} \rightarrow T{\cX}$ 
be the map induced by the projection $p: \cF_{\lambda} \rightarrow \cX$,
and suppose that the composition
$$
\lambda^{n,n}\circ \rho : H^0(N_{Y|X}) \rightarrow H^{n+1}(\Omega^{n-1}_Y)
$$
is an isomorphism. 
Then 

\begin{enumerate}

\item The map $p_*: T\cF_\lambda|_{Y,X} \rightarrow T{\cX}|_{X} = H^1(T_X)$   
is also an isomorphism, and hence the family $\cF_{\lambda}$ 
is smooth of codimension $h^{n+1,n-1}(Y)$ in $\cG$ at $(Y,X)$, 
and the projection $p: \cF_\lambda \rightarrow \cX$ is an isomorphism  
over a neighborhood of $X$. 

\item There are infinitely many 0-dimensional components of the Noether-Lefschetz locus 
$\cF(X) \subseteq |\cO_X(d)|$, forming a countable 
subseteq in $|\cO_X(d)|$.

\end{enumerate}

\end{proposition}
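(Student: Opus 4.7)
The plan is to handle the two statements separately: part (1) is a linear-algebra argument on the tangent spaces using Kodaira vanishing together with the hypothesis that $\lambda^{n,n}\circ\rho$ is an isomorphism, while part (2) is a density/monodromy argument in the style of Voisin \cite{Voi1}, bootstrapping from part (1).

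For part (1), the idea is to unwind the description
\[
T\cF_\lambda = \{(v,u) \in H^1(T_Y) \times H^1(T_X): \beta(v) = \alpha(u),\ v \bullet \lambda^{n,n} = 0\}
\]
and show directly that the projection $(v,u)\mapsto u$ is bijective. First I would note that the exact sequence coming from $0 \to T_Y \to T_X|_Y \to N_{Y|X} \to 0$ and the vanishing $H^1(N_{Y|X}) = H^1(\cO_Y(d)) = 0$ (already invoked in the excerpt) show that $\beta\colon H^1(T_Y)\to H^1(T_X|_Y)$ is surjective with kernel equal to $\op{im}\rho$. For injectivity of $p_*$: if $u = 0$ then $\beta(v) = 0$, so $v = \rho(s)$ for some $s \in H^0(N_{Y|X})$; the condition $v \bullet \lambda^{n,n} = 0$ becomes $(\lambda^{n,n}\circ\rho)(s) = 0$, and the isomorphism hypothesis forces $s = 0$, hence $v = 0$. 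For surjectivity: given $u \in H^1(T_X)$, lift $\alpha(u)$ to any $v_0 \in H^1(T_Y)$; the obstruction class $v_0 \bullet \lambda^{n,n} \in H^{n+1}(\Omega^{n-1}_Y)$ is hit by a unique $s \in H^0(N_{Y|X})$ via $\lambda^{n,n}\circ\rho$, and then $v := v_0 - \rho(s)$ is the required element of $T\cF_\lambda$. Smoothness of $\cF_\lambda$ of the expected dimension follows formally (the linearized problem is surjective with controlled kernel), and the codimension in $\cG$ is $\dim H^0(N_{Y|X}) = h^{n+1,n-1}(Y)$ by the hypothesis. The isomorphism $p\colon \cF_\lambda \to \cX$ locally near $X$ then follows from the inverse function theorem applied to the induced map of smooth germs.

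For part (2), I would combine (1) with the Voisin-style argument reviewed in \S 2 of \cite{AC}. The Noether-Lefschetz locus $\cF \subseteq \cY$ decomposes as a countable union of algebraic subvarieties indexed by primitive integral Hodge classes $\lambda$. By (1), every component $\cF_\lambda$ that contains a triple $(\lambda, Y, X)$ satisfying the infinitesimal hypothesis projects locally isomorphically onto a neighborhood of $X$ in $\cX$; passing to the fiber, the scheme $\cF(X)_\lambda \subseteq |\cO_X(d)|$ has an isolated, i.e. zero-dimensional, point at $Y$. To produce infinitely many such $\lambda$, one uses the density of integral Hodge classes for which the infinitesimal condition holds: after fixing one $(\lambda_0, Y_0, X_0)$ where the composition $\lambda^{n,n}\circ\rho$ is an isomorphism (which is an open condition in $Y$, $X$, and $\lambda$), the global monodromy on the primitive cohomology of the universal $Y$ over $\cY$ spreads the rational span of $\lambda_0$ across a real-analytic subspace of $H^{n,n}_o(Y,\R)$. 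Since the integral classes are dense in this subspace, one gets countably many integral $\lambda$ giving rise to distinct components $\cF_\lambda$ still satisfying the infinitesimal hypothesis, and intersecting countably many non-empty Zariski-open conditions on $X \in \cX$ (by Baire category) produces a general $X$ whose fiber $\cF(X)$ has infinitely many $0$-dimensional components.

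The linear-algebra content of (1) is routine once the diagram and the vanishing $H^1(N_{Y|X}) = 0$ are in place; the real obstacle is the monodromy/density input in (2). The key thing to verify carefully is that the monodromy orbit of $\lambda_0^{n,n}$ really does sweep out a full real subspace of $H^{n,n}_o(Y,\R)$ in the families of interest, so that rational approximation produces infinitely many genuinely distinct components $\cF_\lambda$ rather than translates of finitely many. This is exactly the point where one must invoke the specific geometry of the FCY family (as in \cite{Voi1} for quintics and \cite{AC} for cubic sevenfolds), and it is the step that will be reused in \S\ref{x4} and \S\ref{x23} for $X_4$ and $X_{2.3}$ respectively.
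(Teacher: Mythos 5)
Your part (1) is correct and is exactly the argument the paper is pointing to (it defers to Lemma 2.3 and Proposition 2.4 of [AC], which reproduce Voisin's computation): the long exact sequence of $0 \to T_Y \to T_X|_Y \to N_{Y|X} \to 0$ plus $H^1(N_{Y|X})=0$ gives $\ker\beta = \op{im}\rho$ and surjectivity of $\beta$, and then injectivity/surjectivity of $p_*$ follow from injectivity/surjectivity of $\lambda^{n,n}\circ\rho$ precisely as you say. Note that your reading of the tangent-space condition as $v\bullet\lambda^{n,n}=0$ with $v\in H^1(T_Y)$ is the consistent one (the paper's display writes $u\bullet\lambda^{n,n}$, but the cup product is only defined on $H^1(T_Y)$), and your identification of the codimension with $\dim H^0(N_{Y|X}) = h^{n+1,n-1}(Y)$ is right.

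For part (2) the paper again only cites references (M. Green's argument via Proposition 1.2.3 of [B-MS] and Propositions 2.4--2.5 of [AC]), so your task is to reconstruct that argument, and here the mechanism you invoke is not the right one. Green's density argument is \emph{local} and does not use global monodromy: monodromy preserves the integral lattice and cannot ``spread'' the rational span of $\lambda_0$ into a real-analytic family. What one actually does is consider, over a contractible neighborhood $U$ of $Y_0$ in $|\cO_X(d)|$, the real-analytic locus $V=\{(t,\mu): \mu\in H^{n,n}_o(Y_t,\R)\}$ inside the flat bundle $\cH^{2n}_o$, and use the flat trivialization to map $V$ to the single fiber $H^{2n}_o(Y_0,\R)$. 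The hypothesis that $\lambda^{n,n}\circ\rho$ is an isomorphism (the same infinitesimal input as in part (1)) says exactly that this map is submersive at $(Y_0,\lambda_0)$, hence open near that point; its image therefore contains an open set of $H^{2n}_o(Y_0,\R)$, in which the rational classes are dense. Each such rational class $\mu$ (after clearing denominators, an integral one) produces a point $t$ of $\cF(X)_\mu$ at which the isomorphism condition still holds by openness, hence a reduced $0$-dimensional component. This yields countably many distinct $0$-dimensional components of $\cF(X)$, dense in $|\cO_X(d)|$, with no Baire-category argument over $\cX$ needed for the statement as given (that step belongs to the later ``general $X$'' theorems). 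Since you yourself flagged the monodromy step as the one to verify, be aware that as written it would not go through; replacing it by the flat-transport submersivity argument above closes the gap.
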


\begin{remark}
Part (1) is the analog of Lemma 2.3 and Proposition 2.4 from \cite{AC},
which in turn reproduces the original argument of Voisin in \S1 of \cite{Voi1}.
Part (2) follows from (1) based upon an argument due originally 
to M. Green.  
By (1), the set $\cF(X)_\lambda$ is reduced and 0-dimensional.
In particular, the Noether-Lefschetz locus $\cF(X)$ in $|\cO_X(d)|$ 
has at least one 0-dimensional component. By an argument due originally 
to M. Green the latter implies that $\cF(X)$ has countably many 
$0$-dimensional components and they form a dense subset of $|\cO_X(d)|$, 
see the proof of Proposition 1.2.3 in \cite{B-MS} or Propositions 2.4 and 2.5 
in \cite{AC} together with the references found therein. 
\end{remark}




In \S\ref{x4} and \S\ref{x23} we study two examples of triples, $(\lambda,Y,X)$, 
that fulfill the conditions of the above proposition. In order to verify 
these conditions, we follow the approach used initially by C. Voisin in 
\cite{Voi1}, i.e., we verify these conditions using the graded 
rings of $X$ and $Y$. 




\medskip

\subsection{The infinitesimal invariant of a normal function associated 
to a deformation of a triple $(\lambda,Y,X)$}\label{lyxd}

Let $(\lambda,Y,X)$ be a triple which fulfills the conditions 
of Proposition \ref{lyx-nl}, and suppose 
further that the Hodge conjecture holds
for $Y$. 
Then by (1) of Proposition \ref{lyx-nl} the family, $\cF_\lambda$, is isomorphic to $\cX$ over a neighborhood, $\cU$
of 
$X = X_o \in \cX$. Since the argument is local, we can suppose that 
$\cU = \cX$. 
Thus for any $X_t \in \cX$ the class $\lambda$ 
is represented by an algebraic $n$-cycle $Z_{\lambda,t}$ on $Y_t \subseteq X_t$.
Since $\lambda$ is, by assumption, homologically equivalent to zero,
$Z_{\lambda,t} = \partial \Gamma_{\lambda,t}$ 
for some real $(2n+1)$-chain $\Gamma_{\lambda,t}$ on $X$. 

Let $\cH^{2n+1}_\cX \rightarrow \cX$ 
be the $(2n+1)^{\op{th}}$ cohomology 
bundle and $\cH^{i,j}_\cX \rightarrow \cX$, $i+j = n+1$  be its Hodge subbundles over $\cX$, with the holomorphic filtration 
$F^i\cH^{2n+1}_\cX = \oplus_{k \ge i} \cH^{k,2n+1-k}_\cX$. 
Let 
$$
\cJ_\cX = (\cF^{n+1}\cH^{2n+1}_\cX)^\vee /\cH_{2n+1}(\cX,\ZZ)  \ \longrightarrow \cX
$$ 
be the intermediate Jacobian bundle 
over $\cX$. The cycles $Z_{\lambda,t}, \ t \in \cX^o$ define a normal 
function
$$
\nu_{\lambda}: \cX^o \rightarrow \cJ(\cX^o), 
 \ \ X_t \longmapsto \nu_\lambda(t) := \Phi_{t}(Z_{\lambda,t}),  
$$ 
where $\Phi_t$ is the Abel-Jacobi map for $X_t$, see \S 7 Ch.II in Vol.II of \cite{Voi3}.
The
normal function $\nu_\lambda$ lifts to a holomorphic section 
$\psi_\lambda$ of $(\cF^{n+1}\cH^{2n+1}_\cX)^\vee$, defined on the sections 
$\omega_t$ of $\cF^{n+1}\cH^{2n+1}_\cX$
by
$$
\psi_\lambda(t)(\omega_t) = \int_{\Gamma_t} \omega_t,    
$$
where $\partial \Gamma_t = Z_{\lambda,t}$ -- see above.


Now we use the assumption that $X$ is a FCY manifold of dimension $2n+1$, 
i.e., the only nonzero 
middle Hodge numbers of $X$ are $h^{n+2,n-1} = h^{n-1,n+2} = 1$ and 
$h^{n+1,n} = h^{n,n+1}$. From this fact, it follows that the variation of Hodge structure is a map,
$$
\overline{\nabla}: \cH^{n+1,n}_{\cX} \otimes T_\cX \longrightarrow \cH^{n,n+1}_{\cX}.
$$  
Let ${\rm Ker}\overline{\nabla}$ be the kernel bundle of $\overline{\nabla}$.

The Griffiths infinitesimal invariant $\delta\nu_\lambda$ of the normal 
function $\nu_\lambda$ is a section of the dual bundle, $({\rm Ker}\overline{\nabla})^\vee$, 
defined as follows. 
Let $\sum_i \ \omega_i \otimes \chi_i \in {\rm ker}\overline{\nabla}$, 
and let 
$\tilde{\omega}_i(t)$ be sections of $\cF^{n+1}\cH^{2n+1}_\cX$
such that $\tilde{\omega}_i(0) = \omega_i$. Then
\begin{equation}\label{delta(nu)_Griffiths}
\delta\nu_\lambda (\sum_i \omega_i \otimes \chi_i) 
=
\sum_i \chi_i(\psi_\lambda(\tilde{\omega}_i) 
- \psi_\lambda(0)(\sum_i \nabla_{\chi_i}(\tilde{\omega}_i)),
\end{equation}
see p.721-722 in \cite{AC}, or \cite{Gr} and \cite{Voi1}. 

In \S\ref{x4} and \S\ref{x23} we show that the infinitesimal invariants 
$\delta{\nu}_\lambda$ of certain special primitive algebraic Hodge classes, 
$\lambda$, on two given FCY manifolds $X$ are non-zero. 
This fact together with the following lemma will provide us with the nontriviality of ${\rm Griff}_\QQ(X)$.  Later down the road, the fact that ${\rm Griff}_\QQ(X)$ is infinitely generated will follow from this as well.

\begin{lemma}\label{dnu-non-zero}
Let $X$, $Y$, $\lambda$, and $Z_\lambda$ be as above.
If $\delta{\nu}_\lambda \not= 0$ then for the general 
$X_t \in \cX$ the algebraic $n$-cycle $Z_{\lambda,t}$ 
represents a non-torsion element of ${\rm Griff}(X_t)$. 
\end{lemma}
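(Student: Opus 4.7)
The plan is to argue by contrapositive: assuming $Z_{\lambda,t}$ is torsion in $\op{Griff}(X_t)$ for the general $t \in \cX$, deduce that $\delta\nu_\lambda$ must vanish.

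The first reduction is to the case where $Z_{\lambda,t}$ itself is algebraically equivalent to zero on a Euclidean neighborhood of $X = X_o$ in $\cX$. The locus on which $N \cdot Z_{\lambda,t} \sim_{\op{alg}} 0$ for some positive integer $N$ is a countable union of constructible subsets of $\cX$, since algebraic equivalences between cycles of bounded homology class are parametrized by smooth connected curves inside countably many finite-type moduli spaces. If this locus contains the general $X_t$, then one of these countably many components must have non-empty interior. Since $\delta\nu_\lambda$ is merely rescaled under the replacement $\nu_\lambda \mapsto N\cdot\nu_\lambda$, we may therefore assume $Z_{\lambda,t} \sim_{\op{alg}} 0$ for all $t$ in a neighborhood $\cU$ of $X_o$.

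The second step is to show that the Griffiths infinitesimal invariant of a normal function coming from an algebraically trivial cycle vanishes. For $t \in \cU$ the Abel--Jacobi image $\nu_\lambda(t)$ lies in the algebraic intermediate Jacobian $J_a(X_t) \subseteq J(X_t)$, the maximal abelian subvariety. Its tangent space at the origin, viewed inside $H^{2n+1}(X_t,\CC)/F^{n+1} = H^{n,n+1}(X_t) \oplus H^{n-1,n+2}(X_t)$, corresponds to the largest sub-Hodge structure of $H^{2n+1}(X_t)$ of level $\leq 1$; for an FCY manifold this projects entirely into the $H^{n,n+1}(X_t)$ summand. Unwinding the formula \eqref{delta(nu)_Griffiths}, the kernel condition $\sum_i \omega_i \otimes \chi_i \in \op{Ker}\overline{\nabla}$ combined with Griffiths transversality forces $\sum_i \nabla_{\chi_i}\tilde\omega_i \in \cF^{n+1}\cH^{2n+1}_\cX$, so that both terms in $\sum_i \chi_i(\psi_\lambda(\tilde\omega_i)) - \psi_\lambda(0)(\sum_i \nabla_{\chi_i}\tilde\omega_i)$ are well-defined; the resulting scalar records the failure of $\psi_\lambda$ to be flat against the one-dimensional $H^{n+2,n-1}$-direction inside $\cF^{n+1}\cH^{2n+1}_\cX$, and this failure vanishes as soon as $\nu_\lambda$ takes values in the level-$\leq 1$ sub-VHS.

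The main obstacle is Step 2, specifically identifying the dual bundle $(\op{Ker}\overline{\nabla})^\vee$ in which $\delta\nu_\lambda$ lives with the non-algebraic $(n-1,n+2)$-direction, and verifying that the Abel--Jacobi integrals $\psi_\lambda$ are flat against the complementary directions when $Z_{\lambda,t}$ moves in an algebraic family. The cleanest route is to invoke Voisin's original lemma (\cite{Voi1}, \S1; see also Ch.~7 of Vol.~II of \cite{Voi3}): a normal function valued in a flat abelian subscheme of $\cJ_\cX$ has zero Griffiths infinitesimal invariant. Applying this to the relative algebraic sub-Jacobian bundle yields $\delta\nu_\lambda = 0$, contradicting the hypothesis. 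Note that the FCY condition is what makes the relevant quotient $H^{n+2,n-1}$ one-dimensional and non-zero in the first place, so that non-vanishing of $\delta\nu_\lambda$ is a genuine obstruction.
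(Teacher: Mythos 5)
The paper offers no argument of its own for this lemma---its proof is the single line ``See \cite{Voi1} or \cite{B-MS}''---and your write-up is a correct reconstruction of precisely the argument those references supply: reduce by the countability/Baire-category argument to the case where $Z_{\lambda,t}$ is algebraically trivial in a neighborhood of $X_o$ (rescaling $\lambda$ by the torsion order $N$, which only rescales $\delta\nu_\lambda$), and then invoke the vanishing of the Griffiths infinitesimal invariant for a normal function valued in the algebraic, level~$\le 1$ part of the intermediate Jacobian, which for an FCY manifold is exactly the complement of the one-dimensional $H^{n+2,n-1}$-direction that $\delta\nu_\lambda$ detects. So you take essentially the same route; you have simply spelled out the details the paper delegates to its citations.
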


\begin{proof}
See \cite{Voi1} or \cite{B-MS}. 
\end{proof}


\bigskip

\section{The Griffiths group of the 5-fold quartic double solid $X_4$}\label{x4}

\subsection{The 5-fold quartic double solid and its quadratic sections}

A 5-fold quartic double solid is a double covering 
$$
\pi:X \rightarrow \PP^5
$$ 
branched over a quartic hypersurface $B \subseteq \PP^5$. 
It can be represented as a hypersurface $X = X_4$ of degree $4$ 
in the weighted projective space $\PP^6(1^6;2) = \PP^6(1:1:1:1:1:1:2)$. 
If the opposite is not explicitly stated we assume that $X$ is smooth, 
which is equivalent to the smoothness of its branch locus $B$.

If $(x;y) = (x_0:...:x_6:y)$ are the coordinates in $\PP^6(1^6;2) = \PP^6(x;y)$
and $B = (f_4(x) = 0)$ is the equation of $B \subseteq \PP^5 = \PP^5(x)$ then 
the equation of $X = X_4 \subseteq \PP^6(x;y)$ is 
$$
y^2 - f_4(x) = 0.
$$  
In turn, any smooth hypersurface $X_4 \subseteq \PP^6(1^6;2)$ 
which does not contain the point $(0;1)$ is equal to a 5-fold quartic double 
solid over $\PP^5$. To see this, let 
$$
f(x;z) = z^2 + 2q(x)z + r(x) = 0,
$$ 
${\rm deg}\ q(x) = 2, {\rm deg}\ r(x) = 4$ 
be the equation of $X = X_4 \subseteq \PP^6(x;z) = \PP^6(1^6;2)$.
Then $f(x;z) = (z + q(x))^2 - (q(x)^2 - r(x))$, and after changing 
the weight 2 variable $z$ by $y = z + q(x)$ the equation of 
$X$ becomes $y^2 - f_4(x) = 0$, where 
$$
f_4(x) = q(x)^2 - r(x).
$$
Clearly the map $(x;y) \mapsto X$ restricts to a double covering 
$
\pi: X \rightarrow \PP^5 = \PP^5(x), 
$
with branch locus equal to the quartic 
hypersurface, $B$, defined by the equation $f_4(x) = 0$.




\medskip

\subsection{The graded ring of $X_4 \subseteq \PP^6(1^6;2)$ 
            and of its quadratic sections}\label{rxy-1}

Let 
$$
f(x;y) = y^2 - f_4(x)
$$
be the equation of $X = X_4$ in $\PP^6(1^6;2)$.
In the graded algebra, 
$$
S(X) = \CC[x;y] = \CC[x_0,...,x_5,y],
$$
with ${\rm deg}\ x_i = 1$, ${\rm deg}\ y = 2$ the graded Jacobian 
ideal $J(X)$ of $X$ is generated by the partials 
${\partial f(x,y)}/{\partial x_i} = -{\partial f_4(x)}/{\partial x_i}$
and  ${\partial f(x,y)}/{\partial y} = 2y$.
Let 
$$
R(X) = S(X)/J(X) = \bigoplus_d R_d(X)
$$ 
be the graded Jacobian ring of $X$.
By \cite{Na} 
$$
H_o^{5-p,p}(X) \cong R_{4p-4}(X), \mbox{ for } p = 0,...,5, 
$$
which yields 
$$
h^{5,0}(X) = h^{0,5}(X) = 0, \
h^{4,1}(X) = h^{1,4}(X) = 1, \ 
h^{3,2}(X) = h^{2,3}(X) = 90.
$$

Let $X \subseteq \PP^6(1^6;2)$ be given by $f(x;y) = y^2 - f_4(x) = 0$ as above.
A quadratic section $Y \subseteq X$ which does not contain 
the point $(0;1)$ is given inside $X$ by an equation 
$$
q(x;y) = y - f_2(x) = 0.
$$

\begin{lemma}\label{f22-f4}
If the quadratic section $Y$ of $X$ is as above, 
then the rational projection 
$$
\PP^6(1^6;2) = \PP^6(x;y) \rightarrow \PP^5(x),\ \ (x;y) \mapsto (x)
$$
sends $Y$ isomorphically onto the quartic hypersurface 
$Y_4 \subseteq \PP^5(x)$ defined by the equation 
$$
f(x) = f_2(x)^2 - f_4(x) = 0.
$$
\end{lemma}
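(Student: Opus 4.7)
The plan is to give an explicit pair of inverse morphisms between $Y$ and $Y_4$. The content is essentially substitution; the only subtlety is verifying that the maps are well defined on the weighted projective space $\PP^6(1^6;2)$.

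First I would describe the rational projection, $\pi: \PP^6(x;y) \dashrightarrow \PP^5(x)$, $(x_0:\dots:x_5:y) \mapsto (x_0:\dots:x_5)$. This is a morphism of weighted projective varieties away from the single indeterminacy locus $(0:\dots:0:1)$, since if $(x_0,\dots,x_5) \neq (0,\dots,0)$ then the image point is well defined. The hypothesis that $Y$ does not contain $(0;1)$ (equivalently, that the coefficient of $y$ in the defining equation is nonzero, so we may normalize the equation of $Y$ to $y - f_2(x) = 0$) ensures $\pi$ restricts to a morphism on $Y$.

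Next I would compute the image: on $Y$ the equation $y = f_2(x)$ holds, so substituting into the equation $y^2 - f_4(x) = 0$ of $X$ yields $f_2(x)^2 - f_4(x) = 0$. Thus $\pi(Y) \subseteq Y_4$. For the inverse, I would define $\sigma : Y_4 \to Y$ by $(x_0:\dots:x_5) \mapsto (x_0:\dots:x_5 ; f_2(x))$. This is well defined as a map to the weighted projective space $\PP^6(1^6;2)$, because under the rescaling $x_i \mapsto \lambda x_i$ the $y$-coordinate transforms as $f_2(\lambda x) = \lambda^2 f_2(x)$, which matches the weight $2$ of $y$; moreover, for $(x) \in Y_4$ one has $f_2(x)^2 = f_4(x)$, so $(x; f_2(x))$ satisfies $y^2 - f_4(x) = 0$ and obviously $y - f_2(x) = 0$, i.e.\ $\sigma(x) \in Y$.

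Finally I would observe that $\pi \circ \sigma = \mathrm{id}_{Y_4}$ is tautological, and $\sigma \circ \pi = \mathrm{id}_Y$ because any point of $Y$ has the form $(x; f_2(x))$. Hence $\pi|_Y : Y \xrightarrow{\sim} Y_4$ is an isomorphism. There is no hard step; the only item that might trip up a careless reader is the weight-matching in the definition of $\sigma$, so I would make that verification explicit.
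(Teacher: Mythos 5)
Your proposal is correct and follows essentially the same route as the paper, which simply substitutes $y = f_2(x)$ into $y^2 - f_4(x) = 0$; you have just spelled out the inverse map $\sigma$ and the weight-$2$ compatibility that the paper leaves implicit.
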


\begin{proof}
In the first equation $y^2 - f_4(x)$ of $Y$ replace $y$ by $f_2(x)$ 
coming from the second equation $y - f_2(x) = 0$.  
\end{proof}

Via the interpretation of $Y$ as a quartic hypersurface 
$f(x) = f_2(x)^2 - f_4(x) = 0$ in $\PP^5$, 
its middle primitive cohomology can be computed 
by the formulas from \cite{Na}:
$$
H_o^{4-p,p}(Y) = R_{4p-2}(Y), \ p = 0,...,4,
$$
where 
$R = S(Y)/J(Y) 
= \CC[x_0,...,x_5]/(\frac{\partial f}{\partial x_0}, ..., \frac{\partial f}{\partial x_5})$
is the graded Jacobian ring of $Y$. 
This yields 
$$
h^{4,0}(Y) = h^{0,4}(Y) = 0, \ 
h^{3,1}(Y) = h^{1,3}(Y) = 21, \
h_o^{2,2}(Y) = 141.
$$ 

\medskip

\subsection{Cycles on quadratic sections $Y \subseteq X$}\label{yx}

Let $Y$ be a smooth quadratic section of $X$ which does not 
contain the point $(0;1)$. 
By the discussion from the preceding section, $Y$ is isomorphic to a quartic fourfold $Y_4$.
Therefore by \cite{CM}, the Hodge conjecture holds for $Y$. 
Let $\lambda \in H^{2,2}_o(Y,\ZZ)$ be a Hodge class on $Y$, 
representing a primitive algebraic 2-cycle $Z_\lambda \subseteq Y \subseteq X$, 
see \S\ref{lyxd}.
For the given triple $(\lambda,Y,X)$ the $2^{\op{nd}}$ diagram from \S\ref{lyx}  
becomes

\begin{picture}(136,33)


\put(27,25){\makebox(0,0){$H^0(\cO_Y(2))$}}
\put(40,25){\vector(1,0){11}}
\put(60,25){\makebox(0,0){$H^1(T_Y)$}}
\put(69,25){\vector(1,0){6}}
\put(86,25){\makebox(0,0){$H^1(T_X|_Y)$}}
\put(96,25){\vector(1,0){6}}
\put(118,25){\makebox(0,0){$H^1(\cO_Y(2)) = 0$}}

\put(60,5){\makebox(0,0){$H^3(\Omega^1_Y)$}}

\put(60,22){\vector(0,-1){13}}
\put(40,22){\vector(1,-1){13}}

\put(39,15){\makebox(0,0){{\scriptsize $\lambda^{2,2}\circ \rho$}}}
\put(65,15){\makebox(0,0){{\scriptsize $\lambda^{2,2}$}}}
\put(45,27){\makebox(0,0){{\scriptsize $\rho$}}}

\end{picture}

Below we  translate the above diagram into the language of the graded ring 
$R(Y)$.
In order to do so, we will need to use the following identities that can either be obtained 
 directly, by using the adjoint and the tangent sequence for 
$Y \cong Y_4 \subseteq \PP^5$, or by the Griffiths residue calculus.
\begin{equation}\label{RY2}
H^1(\cO_Y(2)) \cong R_2(Y) \ , \ \ \ \ H^1(T_Y) \cong R_4(Y)
\end{equation}
The equation above, together with the identifications from \ref{rxy-1},  allows us to rewrite the diagram above as,

\begin{picture}(136,33)


\put(37,25){\makebox(0,0){$R_2(Y)$}}
\put(45,25){\vector(1,0){11}}
\put(65,25){\makebox(0,0){$R_4(Y)$}}
\put(74,25){\vector(1,0){6}}
\put(91,25){\makebox(0,0){$H^1(T_X|_Y)$}}
\put(101,25){\vector(1,0){6}}
\put(111,25){\makebox(0,0){$0$}}

\put(65,5){\makebox(0,0){$R_{10}(Y)$}}

\put(65,22){\vector(0,-1){13}}
\put(45,22){\vector(1,-1){13}}

\put(45,15){\makebox(0,0){{\scriptsize $P_\lambda\circ e$}}}
\put(69,15){\makebox(0,0){{\scriptsize $P_\lambda$}}}
\put(50,27){\makebox(0,0){{\scriptsize $e$}}}

\end{picture}

\noindent
where 
$P_\lambda$ is, by slight abuse of notation, multiplication by the polynomial class
$P_{\lambda}$ corresponding 
to $\lambda^{2,2} \in H^{2,2}_o(Y) \cong R_6(Y)$,
and $e$ is multiplication by the polynomial class 
$e = f_2(x) \in R_2(Y)$. 

\medskip

\subsection{The infinitesimal invariant 
            for $(\lambda,Y,X)$ in terms of $R(Y)$}\label{dry-1} 









Let $(\lambda,Y,X)$ be as above, 
$$
\overline{\nabla}: H^{3,2}(X) \otimes H^1(T_X) \rightarrow H^{2,3}(X)
$$
be a variation of Hodge structure for $X$, 
and $\delta{\nu}_\lambda \in ({\rm Ker} \overline{\nabla})^\vee$
%
be the infinitesimal invariant of ${\nu}_\lambda$, \ see \ref{lyxd}. 

Since $X$ is a Fano-Calabi-Yau manifold, the cup-product 
with the unique form  (modulo $\CC^*$), $\omega^{4,1}$,  on $X$  
defines an isomorphism,
$$
{\omega}: H^1(T_X) = H^{-1,1}(X) \stackrel{\sim}\longrightarrow H^{3,2}(X).
$$ 

By the identifications $H^{3,2}(X) \cong R_{4}(X)$ and $H^{2,3}(X) = R_{8}(X)$
from \S\ref{rxy-1}, the v.H.s. $\overline{\nabla}$
is identified with a mapping  
$$
\mu_X: R_{4}(X) \otimes R_{4}(X) \rightarrow R_{8}(X).
$$
It follows from \cite{Voi1}, that in the situation above,
describing the deformations of a triple, $(\lambda,Y,X)$, the following takes place:

\begin{lemma}
The map 
$$
\mu_X: R_4(X) \otimes R_4(X) \rightarrow R_8(X)
$$  
is induced by multiplication of monomials in the homogeneous 
graded ring,
\[
S(X) = \CC[x;y] = \CC[x_0,...,x_7,y].
\]
\end{lemma}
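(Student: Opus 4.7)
The plan is to invoke the Griffiths residue calculus (in the form extended to weighted projective hypersurfaces by Dolgachev and Steenbrink) to realize the primitive Hodge groups $H^{5-p,p}_o(X) \cong R_{4p-4}(X)$ by explicit meromorphic representatives, and then to differentiate the Gauss-Manin connection in these coordinates. The outcome will be that $\mu_X$ is, up to a universal nonzero scalar, ordinary multiplication of polynomial classes in $R(X) = S(X)/J(X)$.

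First, fix the weighted Euler volume form $\Omega$ on $\PP^6(1^6;2)$, and recall that under the Griffiths-Dolgachev-Steenbrink isomorphism a class $[P] \in R_{4p-4}(X)$ is represented by the residue $\text{Res}(P\,\Omega/f^{p+1})$. In particular the generator $\omega^{4,1}$ corresponds to $1 \in R_0(X) = \CC$, an element of $H^{3,2}(X)$ to $[P] \in R_4(X)$, and an element of $H^{2,3}(X)$ to $[Q] \in R_8(X)$. An infinitesimal deformation $f \mapsto f + tg$ with $g \in S_4(X)$ is identified with its Kodaira-Spencer class $[g] \in R_4(X) \cong H^1(T_X)$, and I would check that the cup-product isomorphism $\omega : H^1(T_X) \xrightarrow{\sim} H^{3,2}(X)$ used in \S\ref{dry-1} coincides, under these identifications, with the identity on $R_4(X)$: contraction with $\omega$ corresponds to multiplication by the class $[1] \in R_0(X)$.

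The core computation is then the Gauss-Manin derivative at $t=0$,
\[
\frac{d}{dt}\bigg|_{t=0}\text{Res}\!\left(\frac{P\,\Omega}{(f+tg)^3}\right) = -3\,\text{Res}\!\left(\frac{Pg\,\Omega}{f^4}\right),
\]
whose projection to $H^{2,3}(X) \cong R_8(X)$ is $-3\,[Pg]$. Thus, after the identifications above, $\mu_X([P],[g]) = -3\,[Pg]$ in $R_8(X)$, which (absorbing the nonzero scalar) is precisely the map induced by monomial multiplication in the graded ring $S(X)$.

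The main obstacle is bookkeeping rather than substance: one must carefully set up Griffiths residues in the weighted projective setting, verify that the orbifold point $(0;1) \in \PP^6(1^6;2)$ does not contribute (it lies off $X$ by the smoothness hypothesis that $X$ avoids it), and confirm that the Kodaira-Spencer identification $H^1(T_X) \cong R_4(X)$ agrees with the $\omega$-contraction isomorphism built into the definition of $\mu_X$. Once these identifications are aligned, the residue differentiation displayed above is the classical Griffiths calculation, and the lemma follows immediately.
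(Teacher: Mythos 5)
Your argument is correct, but note that the paper itself offers no proof of this lemma: it is stated as a consequence of Voisin's work, i.e.\ of the Carlson--Griffiths description of the infinitesimal variation of Hodge structure of a hypersurface via its Jacobian ring. What you have written out is precisely the computation underlying that citation, transported to the weighted projective setting, so you are supplying the proof the paper omits rather than taking a genuinely different route. The two points that actually require checking are the ones you flag yourself: (i) that the residue calculus and the isomorphisms $H^{5-p,p}_o(X)\cong R_{4p-4}(X)$ extend to quasi-smooth hypersurfaces of $\PP^6(1^6;2)$ avoiding the orbifold point --- this is the Dolgachev--Steenbrink theory and is exactly the content of the reference [Na] that the paper already invokes for the Hodge-number computation, so nothing new is needed; and (ii) that the identification $H^1(T_X)\cong R_4(X)$ built into the definition of $\mu_X$ (contraction with $\omega^{4,1}$ followed by $H^{3,2}(X)\cong R_4(X)$) agrees with the Kodaira--Spencer identification $g\mapsto[g]$, which follows from the same Carlson--Griffiths statement applied to the pairing $R_0\otimes R_4\to R_4$ with $\omega^{4,1}\leftrightarrow 1\in R_0(X)$. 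Your residue differentiation then yields $\mu_X([P],[g])=-3[Pg]$, and the harmless nonzero scalar does not affect the conclusion that $\mu_X$ is induced by multiplication in $S(X)$. (Incidentally, the ring in the lemma should read $\CC[x_0,\dots,x_5,y]$; the indices running to $7$ are a slip carried over from the cubic-sevenfold case.)
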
 




Again by \cite{Voi1}, under certain conditions  
the infinitesimal invariant $\delta_{\lambda}\nu$ 
can be regarded as a linear form on the kernel of the multiplication 
map 
$$
\mu_Y: R_4(Y) \otimes R_4(Y) \rightarrow R_{8}(Y).
$$ 
More precisely, let  
$y = f_2(x)$ be the equation of $Y$ in $X = (y^2 = f_4(x)) \subseteq \PP^6(x;y)$,
and $f(x) = f_2(x)^2 - f_4(x) = 0$ be the equation of $Y$ in $\PP^5(x)$,
representing $Y$ as a quartic hypersurface in $\PP^5 = \PP^5(x)$.
By the isomorphism, $H^{2,2}_o(Y) \stackrel{\sim}\longrightarrow R_6(Y)$, 
the class $\lambda$ corresponds to $P_{\lambda} \in R_6(Y)$.
Let $e \in R_2(Y)$ be the class defined by the quadric form 
$f_2(x)$. Then the following takes place (see \cite{Voi1} or \cite{AC}):

\begin{lemma}\label{voisin-1}
If the multiplication by $P_{\lambda}.e$ induces an isomorphism 
$$
f_{\lambda} = R_2(Y) \rightarrow R_{10}(Y), 
$$ 
then for any $\omega = \Sigma\ Q_i \otimes R_i \in ker(\mu_Y)$, 
we have the following equality for the infinitesimal invariant:

$$
\delta{\nu}_{\lambda}(\Sigma\ Q_i \otimes R_i) 
= 
\Sigma\ P_{\lambda}Q_i(f_{\lambda}^{-1}(P_{\lambda}R_i)). 
$$
\end{lemma}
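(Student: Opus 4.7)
The plan is to carry out the residue-theoretic computation of Voisin \cite[\S 2]{Voi1}, adapted as in \cite[\S 2]{AC}, within the FCY framework established in the preceding sections. All the necessary ingredients are in place: the isomorphism $p_*: T\cF_\lambda \xrightarrow{\sim} T\cX$ of Proposition \ref{lyx-nl}(1), the Jacobian-ring identifications of \S\ref{rxy-1} and \S\ref{yx}, and the graded-ring description of the cup product $H^1(T_Y) \otimes H^{2,2}_o(Y) \to H^{1,3}(Y)$ as the multiplication $R_4(Y) \otimes R_6(Y) \to R_{10}(Y)$.

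The first step is to geometrically interpret formula (3.1). Given $\sum Q_i \otimes R_i \in \ker(\mu_Y)$ with $Q_i, R_i \in R_4(Y)$, the class $R_i$ lifts to a tangent vector $\chi_i \in H^1(T_X) = R_4(X)$, and by Proposition \ref{lyx-nl}(1) this lifts uniquely to a tangent vector $(v_i, \chi_i) \in T\cF_\lambda|_{(Y,X)}$ of the Noether-Lefschetz family. The tangent-sequence diagram of \S\ref{lyx} combined with the compatibility $\chi_i \bullet P_\lambda = 0$ in $R_{10}(Y)$ forces the normal-bundle component $\rho^{-1}(v_i) \in H^0(N_{Y|X}) = R_2(Y)$ to satisfy $f_\lambda\bigl(\rho^{-1}(v_i)\bigr) = P_\lambda \cdot R_i$. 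The assumption that $f_\lambda = P_\lambda \cdot e$ is an isomorphism then yields the explicit identification $\rho^{-1}(v_i) = f_\lambda^{-1}(P_\lambda R_i) \in R_2(Y)$.

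The second step is the residue calculation proper. Integration by parts on the chain $\Gamma_{\lambda,t}$ combined with the Poincar\'e residue formula on $Y \subseteq X$ rewrites the expression $\chi_i\bigl(\psi_\lambda(\tilde\omega_i)\bigr) - \psi_\lambda(0)\bigl(\nabla_{\chi_i}\tilde\omega_i\bigr)$ as the pairing of the normal variation $f_\lambda^{-1}(P_\lambda R_i) \in H^0(N_{Y|X})$ against $\omega_i \cup \lambda^{2,2} \in H^{n+2}(Y,\Omega^n_Y)$. Under the Griffiths residue description this pairing becomes multiplication in the Jacobian ring of $Y$ landing in the top class $R_{12}(Y)$, contributing $P_\lambda Q_i \cdot f_\lambda^{-1}(P_\lambda R_i)$ for each index $i$. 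Summing and invoking the kernel condition $\sum Q_i R_i = 0$ in $R_8(Y)$ to cancel the residual boundary terms produces the stated formula.

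The principal technical obstacle is the bookkeeping of identifications: one must track cohomology classes through the adjunction sequence, the Poincar\'e residue on $Y$, and the isomorphisms to Jacobian rings, and verify that the ambiguity in the lift $v_i$ modulo $\op{im}(e : R_2(Y) \to R_4(Y))$ contributes nothing once paired against the kernel relation. Both matters are worked out explicitly in \cite{Voi1} and \cite{AC}, and the argument applies verbatim here since the relevant Hodge numbers and Kodaira vanishing statements from \S\ref{rxy-1}--\S\ref{yx} match exactly those assumed in those references.
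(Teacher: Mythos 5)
Your outline is correct and takes essentially the same approach as the paper: the paper offers no proof of this lemma at all, simply citing \cite{Voi1} and \cite{AC}, and your two-step sketch (identifying the normal-bundle component of the lifted tangent vector as $f_{\lambda}^{-1}(P_{\lambda}R_i)$ from the Noether--Lefschetz condition, then evaluating the infinitesimal invariant by Griffiths residue calculus in the Jacobian ring, landing in the socle $R_{12}(Y)$) is precisely the argument carried out in those references. Nothing further is needed.
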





\medskip

\subsection{Cycles on the Fermat quartic fourfold} 
Let $Y$ be the Fermat quartic fourfold, i.e., the quadratic section of $X$ as in Lemma \ref{f22-f4} 
given by the equation,
$$
f(x) = f_2(x)^2 - f_4(x) = x_0^4 + \ ... \ + x_5^4.
$$

Let 
$$
R(Y) = S(Y)/J(Y) = \CC[x_0,...,x_5]/(x_0^3,....,x_5^3)
= \bigoplus_{d\ge0} R_d(Y)
$$ 
be the graded Jacobian ring of $Y$. 
By \S\ref{rxy-1}, the primitive cohomology satisfies 
$
H^{2,2}_o(Y) \cong R_6(Y).
$
Following \cite{Shi}, we now describe the rational cohomology classes 
in $H^{2,2}_o(Y)$ and their corresponding elements from $R_6(Y)$. 

Let $\mu_4$ be the group of 4-th roots $\zeta_i$ of unity, and let 
$G = (\mu_4)^6/\Delta$, where $\Delta$ is the diagonal subgroup. 
If $\ZZ_4 = \ZZ/4\ZZ$, then the character group 
$\hat{G}$ is naturally embedded in $(\ZZ_4)^5$ as 
$$
\hat{G} \stackrel{\sim}\longrightarrow \{ \alpha = (a_0,...,a_5): a_0+...+a_5 = 0 \} 
\subseteq (\ZZ_4)^6;
$$
the character $\alpha \in \hat{G} = \op{Hom}(G,\CC^*)$ representing $(a_0,...,a_5)$ sends 
the element $[\zeta_0,...,\zeta_5] \in G = (\mu_4)^6/\Delta$  
\ to \ $\alpha([\zeta_0,...,\zeta_5]) = \zeta_0^{a_0}...\zeta_5^{a_5}$.
Let 
$$
\hat{G}^* = \{ \alpha = (a_0,...,a_5) \in \hat{G}: a_i \not=0, i = 0,...,5 \}.
$$

For $\alpha = (a_0,...,a_5) \in \hat{G}^*$, define its norm 
$$
|\alpha| = \frac{<a_0> + ... + <a_5>}{4},
$$ 
where $<a_i>$ is the unique integer between $1$ and $3$ 
congruent to $a_i$ modulo $4$. 
The natural action 
$$
g = [\zeta_0,...,\zeta_5]: (x_0:...:x_5) \mapsto (\zeta_0x_0,...,\zeta_5x_5)
$$
of $G$ on $\PP^5$ restricts to an action of $G$ on the Fermat quartic $Y \subseteq \PP^5$; 
which in turn induces a representation $g^*$  of $G$ on the primitive cohomology 
group $H^4_o(Y,\CC)$. 
Let 
$$
V_{\alpha} = \{ \lambda \in H^4_o(Y,\CC): g^*(\lambda) = \alpha(g)\lambda \},
$$  
be the eigenspaces of $g^*$ in $H^4_o(Y,\CC)$
defined by the characters $\alpha \in \hat{G}$.  

With the above notation, the results in \cite{Shi} yield the following:

\begin{enumerate}
\item The primitive cohomology obeys the identity, 
$$
H^{2,2}_o(Y,\QQ)\otimes_\QQ \CC = \bigoplus_{\alpha \in B} V_{\alpha}, 
$$  
where 
$
B = \{ \alpha \in \hat{G}^* : |\alpha| = |3.\alpha| = 3 \}
$
and $3.\alpha =3.(a_0,...,a_5) = (3a_0,...,3a_5)$. 

\item Let $C_o(Y)_\QQ$ denotes the subspace of $H^4_o(Y,\QQ)$ spanned by 
classes of primitive algebraic 2-cycles on $Y$.  There is an equality, 
$$
C_o(Y) = C_o(Y)\otimes_\QQ \CC = H^{2,2}_o(Y,\QQ)\otimes_\QQ \CC.
$$
\end{enumerate}

\begin{remark}
In general, the space $C_o(Y)$ spanned by classes of primitive algebraic 2-cycles 
on a fourfold $Y$ is a subspace of $H^{2,2}_o(Y,\QQ)\otimes_\QQ \CC$; the 
coincidence (2) is the statement of the Hodge conjecture for the Fermat 
quartic fourfold, see \cite{Shi}. 
\end{remark}

\medskip

\subsection{The isomorphism $H^{2,2}_o(Y) \rightarrow R_6(Y)$ in coordinates}

Let $Y = (x_0^4 + \ ... \ x_5^4 = 0)$ be the Fermat quartic fourfold. 
Since the graded ring of $Y$ is 
$$
R(Y) = S(Y)/J(Y) = \CC[x_0,...,x_5]/(x_0^3,...,x_5^3)
$$ 
then in a monomial 
$x_0^{b_0}...x_5^{b_5}$, representing a non-zero class 
modulo $J(Y)$, the coordinates $x_i$ can enter only with degrees 
$0$, $1$ and $2$. 
We therefore use the following terminology; we call a nonzero monomial 
$f = x_0^{b_0}x_1^{b_1}...x_5^{b_5}$ a monomial of type $(2^p1^q)$
if $b_i=2$ for $p$ distinct values of $i$ and $b_i=1$ for $q$ distinct values of $i$.  
Now it is easy to see that $R_6(X)$ is generated by the following 141 monomials, 
regarded as classes modulo $J(Y) = (x_0^3,...,x_5^3)$: 

\medskip

{}\hspace{4cm}   $20$ monomials $f$ of type $(222)$;

{}\hspace{4cm}   $90$ monomials $f$ of type $(2211)$;

{}\hspace{4cm}   $30$ monomials $f$ of type $(21111)$;

{}\hspace{4cm}    $1$ monomial  $f$ of type $(111111)$.

\medskip

By definition, $\alpha = (a_0,...,a_5) \in B$ 
iff $|\alpha| = |3.\alpha| = 3$. Since $B \subseteq \hat{G}^*$,
the coordinates $a_i$ take values $k = 1, 2$ and $3$. 
For an element $\alpha = (a_0,...,a_5) \in \hat{G}^*$,
let  
$$
d_k(\alpha) = \#\{ i : a_i = k \}, 
$$
be the number of occurrences of the number $k \in \{ 1,2,3 \}$ 
among the coordinates $a_i$ of $\alpha$.
We call $\alpha$ an element of type $(3^{p}2^{q}1^{r})$ 
if $d_3(\alpha) = p$, $d_2(\alpha) = q$ and $d_1(\alpha) = r$. 
As in \cite{AC}, the isomorphism 
$$
j: H_o^{2,2}(Y) \rightarrow R_6(Y)
$$
is given by:
$$
j:\alpha = (a_0,...,a_5) \longmapsto x_o^{a_0-1}x_1^{a_1-1}...x_5^{a_5-1}.
$$

Now, by 
a simple combinatorial check, we describe all possible $\alpha \in B$
and their corresponding monomials by $j$ as follows:

\begin{itemize}

\item  $20$ elements $\alpha $ of type $(333111)$ 
                $\stackrel{j}\longrightarrow$  
                the $20$ monomials of type $(222)$;

\item $90$ elements of $\alpha $ of type $(332211)$ 
                $\stackrel{j}\longrightarrow$  
                the $90$ monomials of type $(2211)$;

\item  $30$ elements of $\alpha$ of type $(322221)$ 
                $\stackrel{j}\longrightarrow$  
                the $30$ monomials of type $(21111)$;

\item  $1$ element $\alpha$ of type $(222222)$ 
                $\stackrel{j}\longrightarrow$  
                the unique monomial of type $(111111)$.

\end{itemize}

Recall that by (1) the 1-dimensional eigenspaces $V_{\alpha}$ 
of the above $141$ characters  $\alpha$ span the space 
of primitive cohomology $H_o^{2,2}(Y)$.




\medskip

\subsection{Infinite generation of the Griffiths group of $X_4$.} \label{sec: Fermat}

Let $Y$ be the Fermat hypersurface in $\PP^5(x_0:...:x_5)$
defined by 
$$
f(x) = f_2(x)^2 - f_4(x) = x_0^4 + ... + x_5^4 = 0.
$$
Then $R(Y) = \CC[x_0:...x_4]/I(Y)$, where 
$I(Y) = (x_0^3,...,x_5^3)$.

To simplify the notation, for $0 \le i \le j \le ... \le k$ we write 
$$
x_{ij...k} := x_ix_j...x_k
$$ 
for both the monomial and its class in $R(Y)$.
For example $x_{001123} = x_0^2x_1^2x_2x_3 \in R(Y)$.

We call two monomials $x_{ij...k}$ and $x_{i'j'...k'}$ 
{\it dual} 
if \  $x_{ij...k} \ . \ x_{i'j'...k'} = x_{0011...55}$;
for the dual monomial of $x_{ij...k}$ we shall use the 
notation $x_{\widehat{ij...k}}$, i.e. 
$$
x_{\widehat{ij...k}} \ . \ x_{ij...k} = x_{0011...55}.
$$

Let us first find monomials $P_{\lambda}$ and $e$ 
which fulfill the conditions of Lemma \ref{voisin-1}.
To this end, let 
$$
P_{\lambda} = P_{\lambda_{a,b}} = a(x_{001123} + x_{234455}) + b(x_{012233} +x_{014455})
$$
and 
$$
e = ux_{01} + vx_{23} + wx_{45}.
$$
Then 
$$
P_{\lambda_{a,b}}.e = a(ux_{\widehat{0123}}+ vx_{\widehat{0011}} 
                  + vx_{\widehat{4455}} + wx_{\widehat{2345}})
$$

We verify that for generic $a$ and $b$ the linear map 
$$
P_{\lambda_{a,b}}.e: R_2(Y) \rightarrow R_8(Y)
$$ 
is an isomorphism.
In bases $x_{ij}$ and $x_{\widehat{ij}}$  of $R_2(Y)$ and $R_8(Y)$ 
respectively, $P_{\lambda_{a,b}}.e$ acts as follows:
\begin{eqnarray*}
x_{00} \mapsto avx_{\widehat{11}}, & x_{11} \mapsto avx_{\widehat{00}}\\
x_{22} \mapsto bux_{\widehat{33}}, & x_{33} \mapsto bux_{\widehat{22}}\\
x_{44} \mapsto (av+bu)x_{\widehat{55}}, & x_{55} \mapsto (av+bu)x_{\widehat{44}}
\end{eqnarray*}
\begin{eqnarray*}
x_{04} \mapsto bwx_{\widehat{15}}, & x_{15} \mapsto bwx_{\widehat{04}}\\
x_{05} \mapsto bwx_{\widehat{14}}, & x_{14} \mapsto bwx_{\widehat{05}}\\
x_{24} \mapsto awx_{\widehat{35}}, & x_{35} \mapsto awx_{\widehat{24}}\\
x_{25} \mapsto awx_{\widehat{34}}, & x_{34} \mapsto awx_{\widehat{25}}\\
x_{02} \mapsto (au+bv)x_{\widehat{13}}, & x_{13} \mapsto (au+bv)x_{\widehat{02}}\\
x_{03} \mapsto (au+bv)x_{\widehat{12}}, & x_{12} \mapsto (au+bv)x_{\widehat{03}}
\end{eqnarray*}
\begin{eqnarray*}
x_{01} \mapsto avx_{\widehat{01}} + (au+bv)x_{\widehat{23}} + bwx_{\widehat{45}}\\
x_{23} \mapsto (au+bv)x_{\widehat{01}} + bux_{\widehat{23}} + awx_{\widehat{45}}\\
x_{01} \mapsto bwx_{\widehat{01}} + awx_{\widehat{23}} + (av+bu)x_{\widehat{45}}\\
\end{eqnarray*}
Therefore the matrix of $P_{\lambda_{a,b}}.e$ is 
$$
M_{a,b} \ = \ 
\left( \begin{array}{cc}
0  & av \\ 
av & 0
\end{array} \right)
\oplus 
\left( \begin{array}{cc}
0  & bu \\ 
bu & 0
\end{array} \right)
\oplus
{\left( \begin{array}{cc}
0  & aw \\ 
aw & 0
\end{array} \right)}^{\oplus 2}
\oplus 
{\left( \begin{array}{cc}
0  & bw \\ 
bw & 0
\end{array} \right)}^{\oplus 2}
\oplus 
$$
$$
\oplus \ 
{\left( \begin{array}{cc}
0     & av+bu \\ 
av+bu & 0
\end{array} \right)}
\oplus
{\left( \begin{array}{cc}
0     & au+bv \\ 
au+bv & 0
\end{array} \right)}^{\oplus 2}
\oplus \ A_{a,b} \ ,
$$
where 
$$
A_{a,b} \ = \ 
{\left( \begin{array}{ccc}
av    & au+bv & bw\\ 
au+bv & bu    & aw\\
bw    & aw    & av+bu
\end{array} \right)}.
$$

The determinant 
$$
{\rm det} M_{a,b} = a^6b^6u^2v^2w^8(au+bv)^4(av+bu)^2{\rm det} A_{a,b},
$$
where 
$$
{\rm det} A_{a,b} = -a^2bu^3 -(ab^2+a^3)u^2v - (a^2b+b^3)uv^2 - ab^2v^3 
              + (2a^2b-b^3)uw^2 + (2ab^2-a^3)vw^2.
$$

Therefore if 
$$
abuvw(au+bv)(av+bu){\rm det}A_{a,b} \not=0
$$ 
then
$$
f_\lambda = P_{\lambda_{a,b}}.e: R_2(Y) \rightarrow R_8(Y)
$$ 
is an isomorphism, 
and we can apply Voisin's formula from Lemma \ref{voisin-1} 
to compute the infinitesimal invariant  $\delta_{\lambda_{a,b}}$. 
We take 
$$b = 1, u = v = 1 \ \ {\rm and} \ \ w = h
$$ 
where $h$ is a transcendental number.
It follows from the preceding discussion that
$$
f_a := P_{a,1}.e : R_2(Y) \rightarrow R_8(Y)
$$ 
is an isomorphism if $a(a+1) \not= 0$ (since $h$ is transcendental and $a$ is rational and not equal to $-1$, 
the determinant $det(A_{a,1}) = -(a+1)(a^2+a+1)+(a^2-3a+1)h^2$ 
is always nonzero).

Since $f_a = P_{a,1}.e$ is an isomorphism, we can apply the formula from 
Lemma \ref{voisin-1} to evaluate the infinitesimal invariant 
$\delta_{\lambda_{a,1}}$ at the elements of ${\rm Ker}(\mu_Y)$.
The goal is to find elements $v \in {\rm Ker}(\mu_Y)$
such that $\delta_{\lambda_{a,1}}(v) \not= 0$, which by 
Lemma \ref{dnu-non-zero} to see that $\lambda$ 
is an element of infinite order in ${\rm Griff}(X)$,  
cf. \S 4 of \cite{AC}.

Let $Q = x_{2233}$ and $R = x_{0123}$. Then $v = Q \otimes R \in {\rm Ker}(\mu_Y)$, 
and by Lemma \ref{voisin-1} 
$$
\delta_{\lambda_{a,b}}(Q \otimes R) = P_{\lambda_{a,1}}Q(f_a^{-1}(P_{\lambda_{a,1}}R)),
$$
where $f_a^{-1}: R_8(Y) \rightarrow R_2(Y)$ is the inverse to isomorphism 
$f_a = P_{\lambda_{a,1}}$. 
Since 
$$
P_{\lambda_{a,1}}R = (ax_{001123} + ax_{234455} + x_{012233} + x_{014455})x_{0123}
                   = ax_{\widehat{01}} + x_{\widehat{23}}
$$                   
then 
$$
f_a^{-1}(P_{\lambda_{a,1}}R) = f_a^{-1}(ax_{\widehat{01}} + x_{\widehat{23}}) 
= A_{a,1}^{-1}(ax_{\widehat{01}} + x_{\widehat{23}})  
= \frac{1}{{\rm det} A_{a,1}}(c_{01}x_{01} + c_{23}x_{23} + c_{45}x_{45}),
$$             
where $c_{ij}$ are the minors of the matrix $A_{a,1}$ in the basis 
$x_{01},x_{23},x_{45}$. Now
$$
Q(f_a^{-1}(P_{\lambda_{a,1}}R)) 
= \frac{1}{{\rm det} A_{a,1}} x_{2233}(c_{01}x_{01} + c_{23}x_{23} + c_{45}x_{45})
= \frac{c_{01}}{{\rm det} A_{a,1}}x_{012233},
$$
and hence 
$$
\delta(a) 
= \delta\nu_{\lambda_{a,1}}(Q\otimes R) 
= \frac{c_{01}}{{\rm det} A_{a,1}}P_{\lambda_{a,1}}x_{012233}
=  \frac{c_{01}}{{\rm det} A_{a,1}}x_{001122334455}  =
$$


$$
\ = \ a^2-a + \frac{a^4-a^3+2a-1}{(a^3-3a+1)h^2 +a^2+a+1}.          
$$                   

Since $h$ is transcendental, and $a^3-3a+1$ has no rational roots, 
the above expression never vanishes for rational $a$.  
This provides infinitely many $\lambda_{a,1}$ 
with non-zero infinitesimal invariant, and hence
(by Lemma \ref{dnu-non-zero}) -- infinitely many 
non-torsion elements in ${\rm Griff}(X)$.
This yields our main result about the Griffiths group of 
the 5-fold quartic double solid $X = X_4$:

\begin{theorem} \label{thm: infinitely generated case 1}
For the general $X = X_4 \subseteq \PP^6(1^6;2)$ the Griffiths group 
${\rm Griff}^3_{\QQ}(X)$ is infinitely generated as a vector space over 
the rationals $\QQ$.                  
\end{theorem}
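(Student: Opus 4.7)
The plan is to apply Voisin's method, packaged in Proposition~\ref{lyx-nl}, Lemma~\ref{voisin-1}, and Lemma~\ref{dnu-non-zero}, to a carefully chosen one-parameter family of primitive Hodge classes on a distinguished quadratic section. First, I would specialize to the point $X_0 \in \cX$ whose quadratic section $Y$ is the Fermat quartic fourfold $x_0^4 + \cdots + x_5^4 = 0$; at this point the graded Jacobian ring $R(Y) = \CC[x_0,\ldots,x_5]/(x_0^3,\ldots,x_5^3)$ is completely explicit, and by Shioda's theorem every primitive Hodge class is algebraic.

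Next, I would exhibit a two-parameter family of classes
\[
P_{\lambda_{a,b}} = a(x_{001123} + x_{234455}) + b(x_{012233} + x_{014455}) \in R_6(Y),
\]
together with a quadratic class $e = u\,x_{01} + v\,x_{23} + w\,x_{45} \in R_2(Y)$ representing the equation of $Y$ inside $X_0$. The first technical step is to verify the hypothesis of Lemma~\ref{voisin-1}, namely that $f_\lambda = P_{\lambda_{a,b}} \cdot e : R_2(Y) \to R_8(Y)$ is an isomorphism. Writing this map as an explicit matrix in the monomial basis $\{x_{ij}\}$ of $R_2(Y)$ and its dual basis $\{x_{\widehat{ij}}\}$ of $R_8(Y)$ should produce a block-diagonal structure reducing nonvanishing of the determinant to a polynomial condition coming from one $3\times 3$ diagonal block. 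Setting $b = u = v = 1$ and $w = h$ with $h$ transcendental should force the determinant to be nonzero for all rational $a$ outside a finite set.

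With the isomorphism hypothesis in hand, the next step is to apply the formula of Lemma~\ref{voisin-1} to evaluate $\delta\nu_{\lambda_{a,1}}$ at a fixed element of $\ker(\mu_Y)$. A natural choice is $Q \otimes R := x_{2233} \otimes x_{0123}$: after multiplying by $P_{\lambda_{a,1}}$ and inverting $f_{a}$, only the coefficient of $x_{012233}$ should survive, and it can be extracted from a single minor of the $3\times 3$ block. The resulting value $\delta(a)$ is then a rational function of $a$ with coefficients involving $h$, and transcendence of $h$ ensures that $\delta(a)$ vanishes for only finitely many rational $a$. By Lemma~\ref{dnu-non-zero}, each such $a$ then produces a class which is non-torsion in ${\rm Griff}^3(X_t)$ for $X_t$ general in $\cX$.

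The main obstacle I anticipate is upgrading ``infinitely many non-torsion classes'' to genuine infinite generation as a $\QQ$-vector space, since the classes $\lambda_{a,1}$ for different rational $a$ could in principle satisfy $\QQ$-linear relations in ${\rm Griff}^3_{\QQ}$. I would handle this by the standard Green-style spread argument used by Voisin~\cite{Voi1} and reproduced by Albano--Collino~\cite{AC}: the density of the zero-dimensional Noether--Lefschetz components $\cF(X)_{\lambda_{a,1}}$ in $|\cO_X(2)|$ (part~(2) of Proposition~\ref{lyx-nl}), together with a countability and monodromy argument on the total Noether--Lefschetz locus, lets one extract a $\QQ$-linearly independent infinite subfamily of the constructed classes, giving the desired infinite generation.
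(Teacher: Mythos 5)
Your setup and computation track the paper's own proof essentially verbatim: the specialization to the Fermat quartic fourfold, the classes $P_{\lambda_{a,b}} = a(x_{001123}+x_{234455})+b(x_{012233}+x_{014455})$ and $e = ux_{01}+vx_{23}+wx_{45}$, the block-diagonal matrix with its single $3\times 3$ block controlling the determinant, the choice $b=u=v=1$, $w=h$ transcendental, and the evaluation at $Q\otimes R = x_{2233}\otimes x_{0123}$ are all exactly what the paper does, and you correctly combine $\delta(a)\neq 0$ with Lemma \ref{dnu-non-zero} to get infinitely many non-torsion classes.

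The gap is in your final paragraph, which is where the theorem (as opposed to the lemmas) is actually proved. The mechanism for $\QQ$-linear independence is not the density of the $0$-dimensional Noether--Lefschetz components, nor monodromy on the Noether--Lefschetz locus: all of the classes $\lambda_{a,1}$ live on the one Fermat fourfold $Y$ and are explicit linear combinations of monomial classes, so no monodromy permutes them, and part (2) of Proposition \ref{lyx-nl} only supplies many components, not independence of the associated cycles in ${\rm Griff}$. The argument the paper uses (following Albano--Collino, Theorem 4.2) is quantitative: writing $\delta(a) = p_a + q_a/(r_a t + s_a)$ with $t=h^2$, $p_a=a^2-a$, $q_a=a^4-a^3+2a-1$, $r_a=a^3-3a+1$, $s_a=a^2+a+1$, one checks that the real numbers $\delta(1),\delta(2),\delta(3),\dots$ are linearly independent over $\QQ$ (the partial fractions $1/(r_a t+s_a)$ have pairwise distinct poles and $t$ is transcendental). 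Since $\lambda\mapsto\delta\nu_\lambda$ is linear and the infinitesimal invariant annihilates normal functions of cycles algebraically equivalent to zero, a relation $\sum_i c_i Z_{\lambda_{a_i,1},t}=0$ in ${\rm Griff}_{\QQ}(X_t)$ for general $t$ (spread over $\cX$ by the usual countability argument) would force $\sum_i c_i\,\delta(a_i)=0$, a contradiction. Your proposal extracts only nonvanishing of $\delta(a)$ from the explicit formula, which yields non-torsion classes but cannot exclude $\QQ$-linear relations among them; you need the linear independence of the values $\delta(a_i)$ themselves, and that is the step your sketch omits.
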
                   
                 
\begin{proof}
It is sufficient to see that there exists an infinite 
sequence of integers $a_1,a_2,...$ such that 
$\delta(a_i) = \delta\nu_{\lambda_{a_i,1}}(Q\otimes R)$
are linearly independent over $\QQ$. 
For this we rewrite 
$$
\delta(a) =  {p_a} + \frac{q_a}{r_a t + s_a} 
$$
where 
$t = h^2$ 
and 
$p_a = a^2-a$, 
$q_a = a^4-a^3+2a-1$,  
$r_a = a^3-3a+1$,
$s_a = a^2+a+1$.
When the argument $a$ takes 
the values $i = 1,2,3,...$ then the real numbers 
$\delta(1),\delta(2),\delta(3),...$
generate an infinite dimensional vector space over $\QQ$.
The rest of the argument repeats the proof of Theorem 4.2 
in \cite{AC}. 
\end{proof}


\bigskip


\section{The Griffiths group of $X_{2.3} \subseteq \PP^7$}\label{x23}

\subsection{The Fano-Calabi-Yau fivefold $X_{2.3} \subseteq \PP^7$} 
Let 
$$
X = X_{2.3} = (q(x) = f(x) = 0)
$$ 
be a smooth complete 
intersection of a quadric, $q(x) = 0$, and a cubic, $f(x) = 0$, 
in $\PP^7(x) = \PP^7(x_0:...:x_7)$. 

Since the 5-fold $X = X_{2.3}$ is a complete intersection,
 by the Lefschetz hyperplane section theorem it follows that
all the primitive cohomology groups $H_o^{p,q}(X)$ 
for $p+q < 5 = {\rm dim}\ X$ are zero.  
By \S\ref{rxy-2} below, the middle Hodge numbers of $X$ are 

\begin{equation}\label{cohomology-of-x23}
h^{5,0}(X) = h^{0,5}(X) = 0, \ 
h^{4,1}(X) = h^{1,4}(X) = 1, \ 
h^{3,2}(X) = h^{2,3}(X) = 83. 
\end{equation}

Therefore 
the complete intersection of a quadric and a cubic 
$X_{2.3} \subseteq \PP^7$ is a FCY 5-fold.

\medskip

In the remainder of this section we shall verify that the Griffiths group 
${\rm Griff}_{\QQ}(X_{2.3})$ is infinitely generated as 
a vector space over the rationals.




\medskip

\subsection{Deformations of triples $(X,Y,\lambda)$ 
            and Noether-Lefschetz loci}\label{xyl} 
Let $X \subseteq \PP^N$ be a FCY manifold of dimension $2n+1$. 
Suppose that $X$ is an ample divisor 
in a Fano $(2n+2)$-fold $Y$. As in \S\ref{lyx}, we will assume that 
${\rm Pic}\ Y = \ZZ H$, $-K_Y = rH$ for some integer $r = r(X) \ge 2$, 
and $X \in |\cO_Y(d)|$ for some positive integer $d < r$. 
Denote by $\cX$ and $\cY$ the deformation spaces 
of $X$ and $Y$, and let $\cG$ be the incidence 
\begin{equation}\label{projections2}
\cX \stackrel{p}\longleftarrow \cG = \{ (X,Y): X\subseteq Y \} \stackrel{q}\longrightarrow \cY
\end{equation}
with its two natural projections $p$ and $q$. 

%
%
%
%
%

\medskip

Let $\lambda \in H^{n+1,n+1}_o(Y,\ZZ) \subseteq H^{2n}(Y)$ be the class 
of a primitive integer $(n+1)$-cycle $Z_\lambda$ on $Y$. 
The class $\lambda$ determines locally around $(Y,X) = (Y_o,X_o)$ 
a family $\cF_{\lambda} \subseteq \cG$  
defined by all local deformations $(X_t,Y_t)$ of $(X,Y)$ 
inside $\cG$ for which the class $\lambda \in H^{2n+2}(Y_t) = H^{2n+2}(Y)$ 
remains of type $(n+1,n+1)$. 

For fixed $X$, let $\cY_X = p^{-1}(X)$ be the family of all $(X,Y)$ 
such that $Y$ contains $X$, and suppose that $H^1(T_Y(-X)) = 0$.
Then the tangent space $T\cY_X$ at $(X,Y)$ is naturally identified 
with $H^1(T_Y(-X))$, see e.g. \cite{Tyu} or \cite{B-MS}. 
Now, by exchanging the places of $X$ and $Y$ from \S\ref{lyx}, 
and considering instead of inclusion $\lambda \subseteq Y \subseteq X$
the restriction $Y \supseteq \lambda \mapsto \lambda \cap X \subseteq X$,
we get a diagram

\begin{picture}(136,40)


\put(25,30){\makebox(0,0){$H^0(T_Y|_X)$}}
\put(37,30){\vector(1,0){6}}
\put(57,30){\makebox(0,0){$H^1(T_Y(-X))$}}
\put(70,30){\vector(1,0){11}}
\put(90,30){\makebox(0,0){$H^1(T_Y)$}}
\put(99,30){\vector(1,0){6}}
\put(116,30){\makebox(0,0){$H^1(T_Y|_X)$}}

\put(90,10){\makebox(0,0){$H^{n+3}(\Omega^{n+1}_Y)$}}

\put(90,27){\vector(0,-1){13}}
\put(70,27){\vector(1,-1){13}}

\put(67,20){\makebox(0,0){{\scriptsize $\lambda^{n+1,n+1}\circ \rho$}}}
\put(98,20){\makebox(0,0){{\scriptsize $\lambda^{n+1,n+1}$}}}
\put(75,32){\makebox(0,0){{\scriptsize $\rho$}}}

\end{picture}

\noindent
in which the map, $\rho: H^1(T_Y(-X)) \rightarrow H^1(T_Y)$, is interpreted as 
the Kodaira-Spencer map for the family $\cY_X$. 


Let $\cF \subseteq \cY$ be the Noether-Lefschetz locus 
of all $Y$ for which $H^{n+1,n+1}_o(Y,\ZZ) \not= 0$.
In particular, $\cF_{\lambda}$ is a component of $\cF$.

As in \S\ref{lyx}, for fixed $X$ 
one defines the Noether-Lefschetz locus, 
$\cF(X)$, inside $\cY_X$ to be the set of all $Y \in \cY_X$ 
that belong to $\cF(X)$; and define
$\cF(X)_{\lambda} \subseteq \cF(X)$ to be the set of all 
$Y \in \cY_X$ that belong to $\cF_{\lambda}$.  

For the given triple, $(X,Y,\lambda)$, 
let $T\cF_\lambda$ be the tangent space to 
$\cF_\lambda \subseteq \cY \times \cX$ 
at $(X,Y) = (X_o,Y_o)$. 
The following is the analog of Proposition \ref{lyx-nl}, 
in which the inclusion $Y \subseteq X$ is replaced by $X \subseteq Y$.

\begin{proposition}\label{xyl-nl}
Let $(X,Y,\lambda)$ be as above, let 
$p_*: T\cF_{\lambda} \rightarrow T{\cX}$ 
be the map induced by the projection $p: \cF_{\lambda} \rightarrow \cX$,
and suppose that the composition
$$
\lambda^{n+1,n+1}\circ \rho : H^1(T_{Y}(-X)) \rightarrow H^{n+3}(\Omega^{n+1}_Y)
$$
is an isomorphism. 
Then 

\begin{enumerate}
\item
 The map $p_*: T\cF_\lambda|_{Y,X} \rightarrow T{\cX}|_{X} = H^1(T_X)$   
is also an isomorphism, and hence the family $\cF_{\lambda}$ 
is smooth of codimension $h^{n+3,n+1}(Y)$ in $\cG$ at $(X,Y)$ 
and the projection, $p: \cF_\lambda \rightarrow \cX$, is an isomorphism  
over a neighborhood of $X$. 

\item There are infinitely many 0-dimensional components of the Noether-Lefschetz locus 
$\cF(X) \subseteq \cY_X$ which together form a countable 
subseteq in $\cY_X$.
\end{enumerate}
\end{proposition}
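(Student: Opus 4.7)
The plan is to mimic the proof of Proposition \ref{lyx-nl} verbatim, but with the roles of the two members of the pair interchanged: where in \ref{lyx-nl} the flexibility came from deforming $Y$ inside $X$ (tangent space $H^0(N_{Y|X})$), here the flexibility comes from deforming $Y$ keeping $X$ fixed (tangent space $H^1(T_Y(-X))$). The preceding diagram, together with the identification $T\cY_X = H^1(T_Y(-X))$, is set up precisely to make this parallel. I will first handle (1) via a Voisin-style description of $T\cF_\lambda$, then deduce (2) by the standard density argument of M. Green.

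For (1), Voisin's infinitesimal computation (as in \S1 of \cite{Voi1}) identifies $T\cF_\lambda$ at $(X,Y)$ with the pairs $(u,w) \in H^1(T_X) \times H^1(T_Y)$ that are compatible with the inclusion $X \subseteq Y$ and satisfy the Hodge condition $w \bullet \lambda^{n+1,n+1} = 0$ in $H^{n+3}(\Omega_Y^{n+1})$. Injectivity of $p_*$ is then immediate from the hypothesis: if $u=0$, the deformation of $Y$ fixes $X$, so $w = \rho(w')$ for some $w' \in H^1(T_Y(-X))$, and the Hodge condition becomes $(\lambda^{n+1,n+1}\circ \rho)(w') = 0$, forcing $w'=0$ since $\lambda^{n+1,n+1}\circ \rho$ is an isomorphism. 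For surjectivity, given $u \in H^1(T_X)$, one first lifts $u$ to \emph{some} deformation $(u,w_0)$ of the pair $(X \subseteq Y)$ (this is unobstructed because $X \in |\cO_Y(d)|$ with $d < r$ so the relevant $H^1(N_{X|Y})$ vanishes by Kodaira, exactly as in Proposition \ref{lyx-nl}). The obstruction to staying in $\cF_\lambda$ is $w_0 \bullet \lambda^{n+1,n+1} \in H^{n+3}(\Omega_Y^{n+1})$; by the hypothesis there is a unique $w' \in H^1(T_Y(-X))$ with $\rho(w') \bullet \lambda^{n+1,n+1} = -w_0 \bullet \lambda^{n+1,n+1}$, and $(u, w_0+\rho(w'))$ is the desired lift. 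This proves $p_*$ is an isomorphism, from which smoothness of $\cF_\lambda$ at $(X,Y)$ and the codimension statement $h^{n+3,n+1}(Y)$ follow by a dimension count in $\cG$; the isomorphism $p \colon \cF_\lambda \to \cX$ on a neighborhood of $X$ is then the inverse function theorem.

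For (2), I apply Green's density argument exactly as in the remark following Proposition \ref{lyx-nl}. Part (1) says that $\cF(X)_\lambda$ is reduced and $0$-dimensional at $Y$, giving at least one $0$-dimensional component of $\cF(X)$. Green's countability argument (see the proof of Proposition 1.2.3 in \cite{B-MS}, or Propositions 2.4-2.5 of \cite{AC}) then promotes this to countably many $0$-dimensional components forming a dense subset of $\cY_X$: the integral lattice sections of the local system $R^{2n+2}\pi_*\ZZ$ on $\cY_X$ are countable, each one cuts out a proper analytic subset, and the one corresponding to $\lambda$ is already $0$-dimensional, so generic small perturbations of $\lambda$ by elements of the local monodromy orbit yield new $0$-dimensional components. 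The main obstacle in this program is the surjectivity step of $p_*$ above, but it is handled cleanly by the Kodaira vanishing for $N_{X|Y} = \cO_X(d)$ together with the bijectivity assumption on $\lambda^{n+1,n+1}\circ \rho$, so the argument goes through verbatim by analogy with the $Y \subseteq X$ case.
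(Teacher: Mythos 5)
The paper does not actually prove this proposition: its ``proof'' is the remark pointing to Proposition 2.4.1 and the proof of Proposition 1.2.3 in \cite{B-MS}. Your reconstruction follows exactly the architecture of that reference and of Proposition \ref{lyx-nl}: describe $T\cF_\lambda$ by Voisin's infinitesimal criterion, deduce injectivity of $p_*$ from injectivity of $\lambda^{n+1,n+1}\circ\rho$, deduce surjectivity from surjectivity of $\lambda^{n+1,n+1}\circ\rho$ together with a lifting statement for $T\cG\rightarrow T\cX$, and then run Green's density argument for part (2). The injectivity argument, the correction term $\rho(w')$ in the surjectivity argument, and the treatment of (2) are all as intended.

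The one step that does not survive the role reversal verbatim is the lifting statement, and the justification you give for it is the wrong one. In Proposition \ref{lyx-nl} the fibre of $\cG\rightarrow\cX$ over $X$ is (an open subset of) $|\cO_X(d)|$, the relevant sequence is $0\rightarrow T_{Y\subseteq X}\rightarrow T_X\rightarrow N_{Y|X}\rightarrow 0$, and surjectivity of $T\cG\rightarrow T\cX$ is governed by the vanishing of $H^1(N_{Y|X})=H^1(\cO_Y(d))$, which is where Kodaira enters. In the present proposition the fibre of $\cG\rightarrow\cX$ over $X$ is $\cY_X$, whose tangent space is $H^1(T_Y(-X))$, and the sequence computing $p_*$ is $0\rightarrow T_Y(-X)\rightarrow T_{X\subseteq Y}\rightarrow T_X\rightarrow 0$; the obstruction to lifting $u\in H^1(T_X)$ to $T\cG$ therefore lies in $H^2(T_Y(-X))$, not in $H^1(N_{X|Y})$. (The group $H^1(N_{X|Y})=H^1(\cO_X(d))$ does vanish by Kodaira, but it controls surjectivity of $q_*:T\cG\rightarrow T\cY$, which is not what you need.) The gap is repairable: in the case actually used, $X=X_{2.3}\subseteq Y=Y_3\subseteq\PP^7$, one has $H^2(T_Y(-X))=H^2(T_Y(-2))=0$ by the same Bott-type vanishing the paper invokes to identify $H^1(T_Y(-X))\cong R_1(Y)$, or one can observe directly that every small deformation of $X_{2.3}$ is again a complete intersection of a quadric and a cubic and hence lies on a cubic 6-fold, so $\cG\rightarrow\cX$ is a smooth surjection. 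But as written, the parenthetical ``unobstructed because $H^1(N_{X|Y})$ vanishes, exactly as in Proposition \ref{lyx-nl}'' invokes the wrong cohomology group, precisely because the fibration structure of $\cG$ over $\cX$ here is not the same as in \ref{lyx-nl}; this is the point the paper itself flags when it reinterprets $\rho$ as the Kodaira--Spencer map of the family $\cY_X$.
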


\begin{remark}
For a proof of \ref{xyl-nl} -- see Proposition 2.4.1 and 
the proof of Proposition 1.2.3 in \S 2 of  \cite{B-MS}.   
\end{remark}

\medskip

\subsection{The infinitesimal invariant of a normal function associated 
to a deformation of a triple $(X,Y,\lambda)$}\label{xyld}

${}$

\medskip

 Let $(X,Y,\lambda)$ be a triple which satisfies the conditions 
of Proposition \ref{xyl-nl}, and suppose 
as in \S\ref{lyxd} that the Hodge conjecture holds 
for $Y$. 

Then by (1) of Proposition \ref{xyl-nl}
the map $p_*: T\cF_\lambda \rightarrow T_\cX$ is a local 
isomorphism, and we can proceed as in \S\ref{lyxd} to 
define a normal function $\nu_\lambda: \cX \rightarrow \cJ(\cX)$ 
and its infinitesimal invariant 
$\delta{\nu}_{\lambda} \in ({\rm Ker} \overline{\nabla})^\vee$,
where $\overline{\nabla}: \cH^{n+1,n}_{\cX} \otimes T_\cX \longrightarrow \cH^{n,n+1}_{\cX}$
is the variation of Hodge structure for the Fano-Calabi-Yau  $(2n+1)$-fold $X$. 

The first difference between this situation and the one considered in  \S\ref{lyx} -- \S\ref{lyxd}  
is that instead of regarding $n$-cycles $Z_\lambda$ on $2n$-folds 
$Y \subseteq X$ as $n$-cycles on $X$, we instead consider $(n+1)$-cycles 
$Z_\lambda$ on $(2n+2)$-folds $Y \supseteq X$ and then look at 
the Abel-Jacobi map for their restrictions 
$Z'_{\lambda} = Z_{\lambda} \cap X$, which are already 
$n$-cycles on $X$.  

The second, and perhaps more important difference for our purposes, is 
that instead of varying $Y$ inside $X$,  $Y$ 
varies as a submanifold of $X$. In the former case, the way to interpret the infinitesimal invariant 
$\delta{\nu}_{\lambda}$ in terms of the graded ring of $Y$ was known already by Voisin.\footnote{see also \cite{W}.}

However, as far as the authors are aware, in our latter case there is 
no such translation. 
Nevertheless, as we shall see in the next subsection, in the example we consider of
$X = X_{2.3} \subseteq \PP^7$  this obstacle 
can be overcome.  This is essentially due to the observation that the general cubic 
6-folds, $Y = Y_3$, containing the given $X$ are the same as 
the general hyperplane sections of a nodal 7-fold cubic 
$Z = Z_3 \subseteq \PP^8$ uniquely attached to $X$.
With this observation, one can rewrite the infinitesimal 
invariant $\delta{\nu}_\lambda$ in terms of the graded rings 
of the cubic 6-folds $Y$.

%
%
%
%

\medskip

\subsection{The graded ring of $X_{2.3}$ 
            and of cubic $6$-folds $Y_3$}\label{rxy-2} Let
$$
X = X_{2.3} = (q(x) = f(x) = 0)
$$
be a complete intersection of a quadric $q(x) = 0$ and a cubic $f(x) = 0$ 
in $PP^7(x) = \PP^7(x_0:...:x_7)$.
To understand  the groups, $H_o^{5-p,p}(X)$, we follow \cite{Na} 
and use the Cayley trick to 
represent the primitive cohomology groups 
$H_o^{5-p,p}(X) = H_o^p(\Omega^{5-p}_X)$
as components of the bigraded ring of a hypersurface.

Let $W = \PP_{\PP^7}(\cO(-2)\oplus \cO(-3))$,
and let 
$D_X \subseteq W$ 
be the hypersurface defined by 
$$
F(x;y,z) = yf(x) + zq(x) = 0.
$$

Introduce bidegrees of the variables $(x) = (x_0:...:x_7)$, $y$ and $z$
as follows:
$$
{\rm deg}\ y = (1,-3),\ \ {\rm deg}\ z = (1,-2),\ \ {\rm deg}\ x_i = (0,1),\ i = 0,...,7.
$$

In the bigraded polynomial ring 
$$
S(X) := \CC[x_0,....,x_7,y,z],
$$ 
denote by 
$F_{x_i} = \frac{\partial F}{\partial x_i}$, $i = 0,..,7$,
$F_y = \frac{\partial F}{\partial y} = f(x)$, and 
$F_z = \frac{\partial F}{\partial z} = q(x)$ the 
partial derivatives of $F = F(x;y,z) = F(x_0,...,x_7;y,z)$.
Let 
$$
J(X) \ = \ 
\big( \frac{\partial F}{\partial x_0}, ..., \frac{\partial F}{\partial x_7},
\frac{\partial F}{\partial y},\frac{\partial F}{\partial z} \big) 
\  = \  
\big< \frac{\partial F}{\partial x_0}, ..., \frac{\partial F}{\partial x_7},f(x),q(x) \big>
$$ 
be the Jacobian ideal of $F$, 
and let 
$$
R(X) = S(X)/J(X) = \bigoplus_{a,b} R_{a,b}(X)
$$
be the {\it Jacobian ring} of $X = X_{2.3}$ 
decomposed into bigraded parts $R_{a,b}(X)$. 
Then
$$
H_o^{5-p,p}(X) = R_{p,-3}(X), \ \mbox{ for } p = 0,...,5
$$
-- see \cite{Na}.
Since all Hodge numbers, $h^{p,q}$, with $p+q$ odd  come from primitive classes, we get

\begin{equation}\label{cohomology-of-x23-again}
h^{5,0}(X) = h^{0,5}(X) = 0, \ 
h^{4,1}(X) = h^{1,4}(X) = 1, \ 
h^{3,2}(X) = h^{2,3}(X) = 83. 
\end{equation}










\medskip

\subsection{Cycles on cubic 6-folds $Y \supseteq X$}\label{zly}

Let $X = X_{2.3} = (q(x) = f(x) = 0)$ 
be a general smooth complete intersection of a quadric 
and a cubic in $\PP^7$, and let $Y = Y_3 \subseteq \PP^7$ be a smooth cubic 
6-fold containing $X$. Let $\cX$ and $\cY$ be the deformation spaces of 
$X$ and $Y$, and let 

$$
\cX \stackrel{p}\longleftarrow 
\cG = \{ (X,Y) : X \subseteq Y \} 
\stackrel{q}\longrightarrow \cY
$$ 

\medskip

\noindent
be the variety of pairs $(X,Y) \subseteq \cX \times \cY$
with $X \subseteq Y$ and its projections to $cX$ and $\cY$.

By \cite{Ste}, the Hodge conjecture holds for the cubic 
6-folds $Y$. 
Let $\lambda \in H^{3,3}_o(Y,\ZZ)$ be a Hodge class on $Y$, 
representing a primitive algebraic 3-cycle $Z_\lambda \subseteq Y \subseteq X$,
see \S\ref{xyld}. 
For the given triple $(\lambda,Y,X)$ the diagram from \S\ref{xyl}  
becomes

\begin{picture}(136,33)


\put(20,25){\makebox(0,0){$H^0(T_Y|_X)$}}
\put(32,25){\vector(1,0){6}}
\put(52,25){\makebox(0,0){$H^1(T_Y(-X))$}}
\put(65,25){\vector(1,0){11}}
\put(85,25){\makebox(0,0){$H^1(T_Y)$}}
\put(94,25){\vector(1,0){6}}
\put(111,25){\makebox(0,0){$H^1(T_Y|_X)$}}
\put(122,25){\vector(1,0){6}}
\put(127,25){\makebox(0,0){$...$}}

\put(85,5){\makebox(0,0){$H^4(\Omega^2_Y)$}}

\put(85,22){\vector(0,-1){13}}
\put(65,22){\vector(1,-1){13}}

\put(64,15){\makebox(0,0){{\scriptsize $\lambda^{3,3}\circ \rho$}}}
\put(90,15){\makebox(0,0){{\scriptsize $\lambda^{3,3}$}}}
\put(70,27){\makebox(0,0){{\scriptsize $\rho$}}}

\end{picture}

As in \S\ref{yx}, we rewrite this diagram in terms of the 
graded ring $R(Y)$. For this we first note the following 
identifications that can be obtained directly by using 
the adjoint and the tangent sequences for $X \subseteq Y \subseteq \PP^7$
and Bott vanishing: 

\begin{equation}
H^1(T_Y(-X)) \cong R_1(Y)\ , \ \ 
H^1(T_Y) \cong R_3(Y)\ , \ \
H^4(\Omega^2_Y) = H^{2,4}(Y) \cong R_7(Y).
\end{equation}


Next, as in \S\ref{lyx}, the composition 
$$
\lambda^{3,3}\circ \rho :
H^1(T_Y(-X)) \stackrel{\rho} \longrightarrow H^1(T_Y) 
\stackrel{\lambda^{3,3}} \longrightarrow H^4(\Omega^2_Y)
$$
becomes
$$
P_{\lambda} \circ e : 
R_1(Y) \stackrel{e} \longrightarrow R_3(Y) 
\stackrel{P_{\lambda}} \longrightarrow R_7(Y),
$$
where $P_\lambda:R(Y) \rightarrow R(Y)$ 
is the multiplication by the polynomial 
class $P\lambda \in R_4(Y)$ corresponding 
to $\lambda \in H^{3,3}_o(Y) \cong R_4(Y)$,
and $e: R(Y) \rightarrow R(Y)$ is the 
multiplication by the class of the quadric $q(x)$.




\medskip

\subsection{The v.H.s. for $X_{2.3}$ in terms of the bigraded ring $R(X)$}\label{vhsrx}
As in \S\ref{dry-1}, start from the variation of Hodge structure (v.H.s.) 
for $X = X_{2.3}$
$$
\overline{\nabla}: H^{3,2}(X) \otimes H^1(T_X) \rightarrow H^{2,3}(X).
$$


Since $X$ is a FCY manifold, the cup-product 
with the unique (modulo $\CC^*$) form $\omega^{4,1}$ on $X$ 
defines an isomorphism
$$
H^1(T_X) = H^{-1,1}(X) \stackrel{\omega^{4,1}}\longrightarrow H^{3,2}(X).
$$
By \cite{Na},
$H^{3,2}(X) \cong R_{2,-3}(X)$, $H^{2,3}(X) = R_{3,-3}(X)$; 
and under these isomorphisms the v.H.s. $\overline{\nabla}$
translates to a map 
$$
\mu_X: R_{2,-3}(X) \otimes R_{2,-3}(X) \rightarrow R_{3,-3}(X)
$$

However in this case $\mu_X$ is not given by multiplication 
of polynomials (modulo the Jacobian ideal) as in \S\ref{yx}  - for example multiplication would be additive on
bidegrees.
Fortunately, by using the generic 1:1 correspondence between $X = X_{2.3}$ and nodal 
cubic 7-folds $Z = Z_3$ from \S\ref{xz} below, 
we are able to instead rewrite $\mu_X$ as multiplication $\mu_Z$ in the graded ring $R(Z)$.




\subsection{$X_{2.3}$ and nodal cubic sevenfolds}\label{xz}
Let us think of projective 7-space, $\PP^7$, as the hyperplane $(w = 0)$ 
in the projective 8-space, $\PP^8 = \PP^8(x;w) = \PP^8(x_0:...:x_7:w)$.
Let $Z = Z_3 \subseteq \PP^8$ be a general nodal cubic sevenfold, 
and let $o \in Z$ be the node of $Z$;  Without loss of generality 
we may assume that $p_o = (0:...:0:1)$. 
The rational projection 
$$
p_o: \PP^8(x;w) \DashedArrow[densely dotted    ]
 \PP^7(x), 
$$
from $o$ sends the cubic, $Z$, birationally to $\PP^7(x)$.
Under a slight abuse of notation, we denote this birational map by 
$$
p_o: Z \DashedArrow[densely dotted    ]
 \PP^7.
$$ 
as well.
Since $Z$ has a node at the point $o = (0;1)$, 
then in the same coordinates, $(x;w)$, the equation 
of the cubic, $Z \subseteq \PP^8(x;w)$, can be written as
$$
f(z;w) = f(x) + q(x)w = 0
$$
where $f(x)$ is a cubic form of $(x) = (x_0:...:x_7)$ 
and $q(x)$ is a non-degenerate quadratic form in $(x)$. 

Let $\sigma: \tilde{Z} \rightarrow Z$ be the blowup 
of $Z$ at $o$, and let $E = \sigma^{-1}(o) \subseteq \tilde{Z}$ 
be the exceptional divisor over $o$; the divisor $E$ is 
isomorphic to a smooth 6-fold quadric identified with 
the base $Q = (q(x) = 0)$ of the projective 
tangent cone to $Z$ at $o$. 

The family of lines $L \subseteq Z$ that pass through the point $o$ 
sweep out a cone $R_o$ with vertex $o$ and a base given by the 5-fold 
$$
X = \{ x : q(x) = f(x) = 0 \} \subseteq \PP^7(x).
$$
For the general choice of the nodal cubic, $Z$, the 5-fold, $X = X_{2.3}$, 
is a general smooth complete intersection of a quadric and a cubic in $\PP^7$.

\begin{lemma}\label{x-z}
In the above notation, let 
$$
Z = Z_3 = (f(x) + q(x)w = 0) \subseteq \PP^8(x;z)
$$ 
be the general nodal cubic 7-fold with node $o = (0;1)$. 
Then 

\begin{enumerate}
\item The birational map 
$p_o: Z \DashedArrow[densely dotted    ]
 \PP^7(x)$ 
induced by the rational projection $\PP^8(x;w) \DashedArrow[densely dotted    ] \PP^7(x)$
decomposes as in the diagram below

\begin{picture}(90,60)

\put(35,20){\makebox(0,0){o}}
\put(40,20){\makebox(0,0){$\in$}}
\put(45,20){\makebox(0,0){$Z_3$}}
\put(75,20){\makebox(0,0){$\PP^7$}}
\put(81,20){\makebox(0,0){$\supseteq$}}
\put(89,20){\makebox(0,0){$X_{2.3}$}}

\put(35,50){\makebox(0,0){$E$}}
\put(40,50){\makebox(0,0){$\subset$}}
\put(45,50){\makebox(0,0){$\tilde{Z}$}}
\put(53,50){\makebox(0,0){$\supseteq$}}
\put(60,50){\makebox(0,0){$\tilde{R}_o$}}

\put(35,45){\vector(0,-1){20}}
\put(45,45){\vector(0,-1){20}}
\put(48,45){\vector(1,-1){20}}
\put(64,45){\vector(1,-1){20}}
\put(49,20){\vector(1,0){20}}

\put(60,17){\makebox(0,0){$p_o$}}
\put(43,35){\makebox(0,0){$\sigma$}}
\put(64,35){\makebox(0,0){$\tilde{p}_o$}}

%
\put(45,13){\makebox(0,0){$\cup$}}
\put(45,06){\makebox(0,0){$R_o$}}
\put(57,46){\vector(-1,-4){09}}
%

\end{picture}

\noindent
where $\tilde{R}_o \subseteq \tilde{Z}$ the proper preimage of the 
cone $R_o \subseteq Z$, and $\tilde{p}_o: \tilde{Z} \rightarrow \PP^7 = \PP^7(x)$  
is a birational morphism contracting $\tilde{R}_o$ to the complete 
intersection 
$$
X_{2.3} = (q(x) = f(x) = 0)
$$
of the quadric $(q(x) = 0)$ and the cubic $(f(x) = 0)$ in $\PP^7$. 

\item
The birational morphism $\tilde{p}_o: \tilde{Z} \rightarrow \PP^7$ 
from (i) coincides with the blow-up of $X = X_{2.3}$ in $\PP^7$. 
Moreover the inverse to (i) takes place: 

If $X_{2.3} = (q(x) = f(x) = 0)$ be a general complete intersection 
of a quadric $(q(x) = 0)$ and a cubic $(f(x) = 0)$ in $\PP^7 = \PP^7(x)$, 
then the blowup of $\PP^7$ at $X$ is the same as the blowup $\tilde{Z}$ 
of the nodal cubic 7-fold $Z = Z_3 = (f(x) + q(x)w = 0)$. 

\end{enumerate}
\end{lemma}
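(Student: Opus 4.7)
My plan is to realise both $\tilde Z = \mathrm{Bl}_o Z$ and the blow-up $\mathrm{Bl}_X \PP^7$ as (different projections of) the closure of the graph of the rational map
\[
\phi: \PP^7 \dashrightarrow Z, \qquad x \longmapsto (q(x)x_0 : \cdots : q(x)x_7 : -f(x)),
\]
inside $\PP^7 \times Z$. Starting from the given equation $F(x;w) = f(x) + q(x)w$ with $q$ non-degenerate, I would parametrise a line $\ell_x$ through $o$ in direction $(x;0)$ as $(tx:s)$ and substitute into $F$ to obtain
\[
F(tx;s) \;=\; t^2\bigl(tf(x) + s\,q(x)\bigr).
\]
Three consequences follow at once: the intersection of $\ell_x$ with $Z$ has multiplicity two at $o$; the residual intersection point is $\phi([x])$; and $\ell_x \subseteq Z$ if and only if $[x] \in V(q,f) = X$. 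Hence $p_o$ is birational with inverse the rational map $\phi$ defined off $X$, and $R_o$ is the cone over $X$ with apex $o$. Non-degeneracy of $q$ implies that the projectivised tangent cone at $o$ is the smooth quadric sixfold $Q = V(q) \subseteq \PP^7$, so $\tilde Z = \mathrm{Bl}_o Z$ is smooth with exceptional divisor $E \cong Q$.

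For part (1), I would lift $p_o$ to a morphism $\tilde p_o: \tilde Z \to \PP^7$ on the blow-up and analyse its fibres. A point of $E$ is a tangent direction $[v] \in Q$ at $o$, and the projection of any curve on $Z$ through $o$ tangent to $[v]$ limits to $[v] \in \PP^7$; hence $\tilde p_o$ restricts to $E$ as the inclusion $Q \hookrightarrow \PP^7$ and is defined on all of $E$. For $[x] \in \PP^7 \setminus Q$ the fibre is the single residual intersection point $\phi([x])$; for $[x] \in Q \setminus X$ the line $\ell_x$ meets $Z$ only at $o$ to order three, so the fibre is the single point $[x] \in E$; and for $[x] \in X$ the line $\ell_x$ lies in $Z$ and its proper transform $\tilde\ell_x \cong \PP^1$ forms the whole fibre, meeting $E$ at the tangent-direction point $[x] \in Q$. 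This exhibits $\tilde R_o \to X$ as a $\PP^1$-bundle and gives the desired birational morphism contracting $\tilde R_o$ onto $X$.

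Part (2) I would handle via the universal property of the blow-up. The inverse-image ideal $\tilde p_o^{-1}(\mathcal I_X)\cdot\mathcal O_{\tilde Z}$ is locally generated by the pullbacks of $qx_0,\ldots,qx_7,f$, whose common zero scheme on the smooth variety $\tilde Z$ is precisely the Cartier divisor $\tilde R_o$; this ideal is therefore invertible, producing a unique morphism $g: \tilde Z \to \mathrm{Bl}_X \PP^7$ over $\PP^7$. Since both $\tilde Z$ and $\mathrm{Bl}_X \PP^7$ are smooth proper birational varieties over $\PP^7$ whose exceptional loci are $\PP^1$-bundles over the same $X$, and $g$ respects these structures, $g$ is a birational morphism of smooth projective varieties with finite fibres and hence an isomorphism by Zariski's main theorem. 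The converse claim is then immediate: for a general $X_{2.3} = V(q,f)$, the polynomials reconstruct the nodal cubic $Z = V(f+qw)$ and the same graph identification applies in reverse.

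The step I expect to be the main obstacle is showing that a single blow-up at $o$ suffices to eliminate all indeterminacy of $p_o$, i.e.\ that $\tilde p_o$ extends regularly along the entire exceptional divisor $E$. This is precisely where the non-degeneracy of $q$ is essential: if $q$ were degenerate the tangent cone would be singular, the assignment ``tangent direction at $o$ $\mapsto$ projection limit in $\PP^7$'' would fail along the singular locus of $Q$, and further blow-ups would be needed. The closed-form description $\tilde p_o|_E = (Q \hookrightarrow \PP^7)$ is the compact witness that a single blow-up already does the job; once this is secured the remainder of the argument reduces to a formal application of the universal property.
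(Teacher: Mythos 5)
The paper offers no argument here---its ``proof'' reads ``straightforward, left as an exercise''---so there is nothing to compare against; your job was to actually supply the details, and you have done so correctly. The central computation $F(tx;s)=t^2\bigl(tf(x)+s\,q(x)\bigr)$ is right, and everything else follows from it as you say: the residual intersection gives the inverse rational map, $\ell_x\subseteq Z$ iff $[x]\in V(q,f)$ identifies $R_o$ as the cone over $X_{2.3}$, nondegeneracy of $q$ makes $o$ an ordinary double point with exceptional quadric $E\cong Q$, and the fibre analysis of $\tilde p_o$ ($1$ point off $Q$, $1$ point on $Q\setminus X$, a $\PP^1$ over $X$) establishes part (1).

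Two small points in part (2) deserve tightening. First, the inverse-image ideal is $\tilde p_o^{-1}(q,f)\cdot\mathcal O_{\tilde Z}$ (the ideal of $X$ is $(q,f)$, not $(qx_0,\dots,qx_7,f)$); in the standard chart $x_0=u$, $x_i=uy_i$ on $\tilde Z=V\bigl(q(1,y)+u f(1,y)\bigr)$ one has $q(1,y)=-u f(1,y)$, so the pulled-back ideal is the principal ideal $\bigl(f(1,y)\bigr)$ cutting out $\tilde R_o$---so invertibility holds, as you claim. Second, ``$g$ respects these structures, hence has finite fibres'' is the one assertion you do not actually verify: a priori $g$ could contract the $\PP^1$-fibres of $\tilde R_o\to X$ onto $X$. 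The same chart disposes of this in one line: $g(u,y)=\bigl([1:y],[q(1,y):f(1,y)]\bigr)=\bigl([1:y],[-u:1]\bigr)$, which is injective on each fibre $\{y=\mathrm{const}\}$, so $g$ is quasi-finite and Zariski's main theorem applies. With that line added, your proof is complete and is exactly the kind of argument the authors had in mind.
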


\begin{proof} 
The proof is straightforward, we leave the details as an exercise to the reader.
\end{proof}

\medskip

\subsection{The infinitesimal invariant of $(X,Y,\lambda)$ 
            by equivalences of graded rings}\label{xyldr}

Below, we rewrite the infinitesimal invariant $\delta{\nu}_\lambda$ 
for a triple $(X,Y,\lambda) = (X_{2.3},Y_3,\lambda)$ (as in \S\ref{xyl}), 
by using the equivalence of graded rings of $X_{2.3}$ and the nodal cubic 
7-fold $Z = Z_3$ corresponding to $X$.  However, in order to proceed, we first require the following lemma:




\begin{lemma}
Let $X = X_{2.3} \subseteq \PP^7$ be the general complete intersection
of a quadric and a cubic, and let $Z = Z_3 \subseteq \PP^8$ be the nodal 
cubic 7-fold corresponding to $X$ by Lemma \ref{x-z}. 
Then a 6-fold cubic, $Y$, containing $X$ and not containing 
the quadric, $q(x) = 0$, as a component
can be identified with a hyperplane sections $Y$ of $Z$ 
which do not pass through the node $o$ of $Z$. In particular, 
the generic cubic 6-fold containing the 5-fold $X = X_{2.3}$ 
is the same as 
the generic hyperplane section of its corresponding cubic 7-fold 
$Z = Z_3$.
\end{lemma}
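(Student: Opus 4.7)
The plan is to exhibit both families---hyperplane sections $Y = H \cap Z$ with $o \notin H$, and cubic $6$-folds $Y \subset \PP^7$ through $X$ not having $(q=0)$ as a component---as two parameterizations of the same linear system, both indexed by the $8$-dimensional space $V := H^0(\PP^7, \cO(1))$ of linear forms $\ell(x)$. First I would note that, because the node is $o = (0;1)$, a hyperplane $H \subset \PP^8(x;w)$ fails to contain $o$ if and only if the coefficient of $w$ in its equation is nonzero; after rescaling, such an $H$ has a unique equation $H_\ell : w + \ell(x) = 0$ with $\ell \in V$. The projection $p_o$ from $o$ restricts to an isomorphism $H_\ell \cong \PP^7(x)$, and substituting $w = -\ell(x)$ into $f(x) + q(x) w = 0$ identifies the hyperplane section with
$$ Y_\ell := H_\ell \cap Z \;\cong\; \bigl\{\, f(x) - \ell(x)\, q(x) = 0 \,\bigr\} \subset \PP^7. $$
Tautologically $Y_\ell \supset X = \{q = f = 0\}$, and $Y_\ell$ does not have $\{q=0\}$ as a component since otherwise $q \mid (f - \ell q)$ would force $q \mid f$, collapsing $X$ onto $\{q=0\}$ and contradicting $\dim X = 5$.

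For the converse, I would use that the smooth complete intersection $X = V(q,f)$ is cut out by a regular sequence, so its saturated ideal is generated by $q$ and $f$. Consequently the cubic piece decomposes as
$$ I(X)_3 \;=\; \CC \cdot f \;\oplus\; q \cdot V, $$
and any cubic $6$-fold $Y = \{F = 0\}$ containing $X$ has $F = c\,f + q\,b$ for a scalar $c$ and a linear form $b$. The quadric $\{q=0\}$ is a component of $Y$ iff $q \mid F$ iff $c = 0$; so the hypothesis that $Y$ does not contain $\{q=0\}$ amounts to $c \neq 0$, and after rescaling we may take $c = 1$ and write $F = f - \ell q$ with $\ell := -b$. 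This $Y$ then coincides with the $Y_\ell$ produced in the first step.

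Combining the two directions yields the required bijection between hyperplane sections of $Z$ away from $o$ and cubic $6$-folds through $X$ not containing the quadric component; both families are parameterized by $\ell \in V$, so in particular the generic element of either family corresponds under $\ell \mapsto H_\ell$. I do not foresee a substantive obstacle: the argument reduces to the explicit substitution $w = -\ell(x)$ into the defining equation of the nodal cubic $Z$, together with the elementary fact that the homogeneous ideal of a smooth complete intersection is generated by its defining forms.
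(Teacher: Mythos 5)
Your proof is correct and follows essentially the same route as the paper's: substitute $w = \pm\ell(x)$ into the equation of $Z$ to identify hyperplane sections missing the node with cubics of the form $f + \ell q = 0$ through $X$. You additionally justify the key assertion that every cubic through $X$ not containing the quadric has this form, via $I(X)_3 = \CC\, f \oplus q\cdot V$ from the complete-intersection property, a step the paper takes for granted.
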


\begin{proof}
In the notation of \S\ref{rxy-2}, 
let 
 $X = X_{2.3} = (q(x) = f(x) = 0) \subseteq \PP^7(x)$, 
and let 
$Z = Z_3 = (f(x) + wq(x) = 0) \subseteq \PP^8(x;w)$ 
be the nodal cubic corresponding to $X$ by Lemma \ref{x-z}. 
A cubic 6-fold $Y \subseteq \PP^7(x)$ containing $X$ and not containing 
the quadric $(q(x) = 0)$ as a component has the equation 
$$
Y = (f(x) + l(x)q(x) = 0)
$$
where $l(x)$ is a non-zero linear form of $(x) = (x_0:...:x_7)$.

From the equation $F(x;w) = f(x) + wq(x)$ of the nodal 
cubic 7-fold $Z = Z_3$ corresponding to $X$,  
we see that $Y$ is the same as the linear section 
$$
Y = Z \cap (l(x) - w = 0)
$$
of the cubic $Z$. Since in the linear form, $l(x) - w$, the coefficient at $w$ 
is non-zero, it follows that the node, $o = (0;1)$, of $Z$ does not lie 
on the linear section $Y \subseteq Z$.  Furthermore, since $l(x)$ is a general linear form on $(x)$, 
the form, $l(x) - w$, defines the general linear 
section of $Z$ that does not pas through its node, $o$.
\end{proof}  

\medskip

\subsubsection{\bf The bigraded Jacobian ring $R(X)$}
Let $X = (q(x) = f(x) = 0)$,
where $2q(x) = x_0^2+...+x_7^2$, $3f(x) = x_0^3+...+x_7^3$.
Then 
$R(X) = S(X)/J(X)$ is the same as $\CC[x_0,...,x_7,y,z]$ 
modulo the relations 
$$
x_0^3+...+x_7^3 = x_0^2+...+x_7^2 = 0 \ \mbox{ and } \ yx_i^2 = zx_i, i = 0,...,7.
$$ 
A simple combinatorial check yields:

\begin{lemma} 
Let $R(X)$ be the graded ring of $X = X_{2.3}$ with equations chosen
as above. Then: 
 
 \begin{enumerate}
 
\item  $H^{4,1}(X) = R_{1,-3}(X) = \CC y$.

\item $H^{3,2}(X) = R_{2,-3}(X)$ is generated over $\CC$ 
by the following 92 monomials:

\medskip

\begin{itemize}

\item the 8 monomials:  $z^2x_i, i = 0,...,7$;


\item the 28 monomials:  $yzx_ix_j, 0 \le i < j \le 7$; 


\item and the 56 monomials: $y^2x_ix_jx_k, 0 \le i < j < k \le 7$. 

\end{itemize}

\end{enumerate}

\end{lemma}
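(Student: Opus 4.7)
The plan is a direct combinatorial computation inside the bigraded polynomial ring $S(X) = \CC[x_0,\ldots,x_7,y,z]$ using the relations generating the Jacobian ideal $J(X)$. With the normalizations $2q(x) = \sum x_i^2$ and $3f(x) = \sum x_i^3$, one has $F_{x_i} = y x_i^2 + z x_i$, $F_y = \tfrac{1}{3}\sum x_i^3$ and $F_z = \tfrac{1}{2}\sum x_i^2$. Thus in $R(X) = S(X)/J(X)$ we have the relations
\[
\sum_{i=0}^{7} x_i^2 \equiv 0,\qquad \sum_{i=0}^{7} x_i^3 \equiv 0,\qquad y x_i^2 \equiv -zx_i \quad (i=0,\ldots,7).
\]

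First I would enumerate all monomials of $S(X)$ of the prescribed bidegree. A monomial $y^{a} z^{b} x^{c}$ (with $|c|$ the total $x$-degree) has bidegree $(a+b,\, -3a-2b+|c|)$. Solving $(a+b,-3a-2b+|c|)=(p,-3)$ gives $|c| = a + (p-b)\cdot\text{(something)}$; concretely for $p=1$ the unique solution is $(a,b,|c|)=(1,0,0)$, yielding the single monomial $y$, and for $p=2$ the three possibilities $(a,b,|c|) = (0,2,1),\,(1,1,2),\,(2,0,3)$ yield $z^2 x_i$ (8 monomials), $yz\, x_i x_j$ ($i\le j$, 36 monomials) and $y^2 x_i x_j x_k$ ($i\le j\le k$, 120 monomials), for a total of 164 monomials in bidegree $(2,-3)$.

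Next I would apply the relations $yx_i^2 \equiv -zx_i$ systematically to reduce every monomial containing a factor of the form $y \cdot x_i^2$ (when there is still a $y$ available). For $y^2 x_i x_j x_k$ with a repeated index, say $i=j$, one writes $y^2 x_i^2 x_k = y(yx_i^2)x_k \equiv -yz\, x_i x_k$, so only the monomials with three distinct indices survive, giving $\binom{8}{3}=56$. Similarly, for $yz\, x_i^2$ one gets $z(yx_i^2) \equiv -z^2 x_i$, leaving the $\binom{8}{2}=28$ monomials with distinct indices. The $8$ monomials $z^2 x_i$ are untouched. This yields the spanning set of $56+28+8=92$ monomials claimed in (ii); the single monomial $y$ of (i) is forced by the bidegree count. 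The nonvanishing of $y$ in $R_{1,-3}(X)$ follows from the identification $R_{1,-3}(X)\cong H^{4,1}(X) \cong \CC$ in \eqref{cohomology-of-x23-again}, which also pins down $R_{1,-3}(X)=\CC y$.

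The only real point requiring care is that the remaining two relations, $\sum x_i^2 \equiv 0$ and $\sum x_i^3 \equiv 0$, might produce further simplifications or even collapse the list. I would observe that multiplying $\sum x_i^2$ by monomials of the appropriate complementary bidegree (namely $z^2$, $yz$ and $y^2 x_l$) and multiplying $\sum x_i^3$ by $y^2$ produces only \emph{linear relations} among the already-listed 92 monomials (after reducing their $y x_i^2$-subwords), not further eliminations; these relations account precisely for the drop $92 - 83 = 9$ predicted by \eqref{cohomology-of-x23-again}, but they do not remove any monomial from the spanning set. The main obstacle is therefore purely bookkeeping, making sure no hidden reduction via combinations of the three relation families escapes the argument; once that is confirmed the lemma follows.
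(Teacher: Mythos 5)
Your proof is correct and is precisely the ``simple combinatorial check'' the paper invokes without writing out: enumerate the $164$ monomials of bidegree $(2,-3)$ (and the single monomial $y$ in bidegree $(1,-3)$), reduce repeated $x$-indices via the relations $F_{x_i}=yx_i^2+zx_i$ to reach the $8+28+56=92$ spanning monomials, and note that the remaining generators $q$ and $f$ of $J(X)$ contribute only linear relations among these (the $9$ relations the paper records immediately after the lemma, giving $92-9=83$). The only blemishes are cosmetic: the garbled intermediate formula for $|c|$, and a sign ($yx_i^2\equiv -zx_i$ versus the paper's $yx_i^2=zx_i$) that is immaterial to which monomials span.
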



%
%
%

There are $9$ independent relations between the 92 monomials from (2). 
These relationships can be found, using the following identities in $R(X)$,
\[
yx_i^2 \mapsto zx_i, \ i = 0,...,7,
\]
which yield the following identities in the graded component $R_{2,-3}(X)$, 	
\[
y^2x_i^3 =  yzx_i^2 = z^2x_i \text{ and } y^2x_i^2x_j = yzx_ix_j.
\]

Now, using the basis from (2), we get 1 relation between $z^2x_i = y^2x_i^3$
coming from $3f(x) = x_0^3+...+x_7^3 = 0$,
and  8 relations between $y^2x_i^2x_i$
coming from $2q(x)x_i = (x_0^2+...+x_7^2)x_i = 0$.
One can easily verify that all the relations 
between the generators from (2) are generated by the above 
9 relations.  In particular, ${\rm dim}\ H^{3,2}(X) = 92 - 9 = 83$.











\medskip

\subsubsection{\bf The graded ring, $R(Z)$, of the nodal cubic 7-fold, $Z$, corresponding to $X$}

For the above choice of $X \subseteq \PP^7(x)$ 
the nodal cubic, $Z \subseteq \PP^7(x;w)$, corresponding to $X$ is 
$$
Z = (6f(x;w) = 2f(x) - 3wq(x) = 2(x_0^3 + ... + x_7^3) - 3w(x_0^2 + ... + x^2) = 0)
$$ 
In the graded ring, $S(Z) = \CC[x;w] = \CC[x_0,..,x_7,w]$, 
the Jacobian ideal, $J(Z)$, of $Z$ is generated by 
the relations
$$
f_{x_i} = x_i^2 - wx_i = 0 , i = 0,..,7
\ \mbox{ and } \  
f_w = x_0^2 + ... + x_7^2 = 0.
$$

Let 
$$
R(Z) = S(Z)/J(Z) = \bigoplus_{d\ge 0} R_d
$$ 
be the graded Jacobian ring of $Z$.
Then the component $R_3(Z)$ is generated by:

\begin{itemize} 
\item the 56 monomials: $x_ix_jx_k, 0 \le i < j < k \le 7$; 

\item  the 28 monomials: $wx_ix_j = x_i^2x_j = x_ix_j^2, 0 \le i < j \le 7$; 

\item the 8 monomials: $w^2x_i = wx_i^2 = x_i^3, i = 0,...,7$;

\item and the monomial: $w^3$. 
\end{itemize}

\begin{lemma}\label{rx-rz}
Let $X$ and $Z$ be as above. 
Then the $\CC$-linear map, $S(X) = \CC[x;y,z] \rightarrow S(Z) = \CC[x;w]$, 
defined by 
$$
y \mapsto 1, \ \ z \mapsto w, \ \ x_i \mapsto x_i, i = 0,...,7
$$
factors through the Jacobian ideals $J(X)$ and $J(Z)$.
Let
$$
j: R(X) = S(X)/J(X) \rightarrow S(Z)/J(Z) = R(Z)
$$
be the induced map, 
$S^o(Z) = \CC[x;w]^o$ be the set of all $g(x;z)$ that vanish
at the node $o = (0;1)$ of $Z$, 
$J^o(Z) = J(Z) \cap S^o(Z)$, 
and 
$$
R^o(Z) = S^o(Z)/J^o(Z).
$$
The map $j: R(X) \rightarrow R(Z)$ restricts to isomorphisms:
$$
j: R_{2,-3}(X) \stackrel{\sim}\longrightarrow R_3^o(Z)
$$ 
and 
$$
j: R_{3,-3}(X) \stackrel{\sim}\longrightarrow R_6^o(Z).
$$ 
\end{lemma}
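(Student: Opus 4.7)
The plan is to verify first that $j$ is well-defined on $R(X)$, i.e., that $j(J(X)) \subseteq J(Z)$, and then to show that its restrictions $j: R_{2,-3}(X) \to R_3^o(Z)$ and $j: R_{3,-3}(X) \to R_6^o(Z)$ are isomorphisms by exhibiting compatible monomial spanning sets and matching the defining relations on both sides.

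For well-definedness, I would check the images of the generators of $J(X)$, namely $F_{x_i} = yx_i^2 + zx_i$, $F_y = f(x)$, and $F_z = q(x)$. The first maps to $x_i^2 + wx_i$, which, up to sign, is the generator $x_i^2 - wx_i$ of $J(Z)$; $F_z = q(x) = \tfrac12 \sum x_i^2$ lies in $J(Z)$ by definition; and for $F_y = \tfrac13 \sum x_i^3$ I would combine $\sum x_i^2 \in J(Z)$ with the eight relations $x_i^2 \equiv wx_i$ in $R(Z)$ to first deduce $w\sum x_i \in J(Z)$ and then $\sum x_i^3 = w^2\sum x_i \in J(Z)$. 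This shows $j$ descends to the quotient.

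For the isomorphism in bidegree $(2,-3)$, the monomial spanning set of $R_{2,-3}(X)$ consists of the $8$ monomials $z^2 x_i$, the $28$ monomials $yzx_ix_j$, and the $56$ monomials $y^2 x_i x_j x_k$, subject to the nine explicit relations already listed in the text (one from $f(x)=0$ and eight from $x_i q(x)=0$). Under $j$ these map bijectively onto the $92$ monomials of $R_3(Z)$ vanishing at the node $o=(0{:}\ldots{:}0{:}1)$, namely $w^2x_i$, $wx_ix_j$, and $x_ix_jx_k$. I would then verify that the nine relations in $R_{2,-3}(X)$ become precisely nine independent relations in $R_3^o(Z)$; these latter relations can be read off from the ideal generator $w\sum x_i \in J_2(Z)$ (obtained by combining $\sum x_i^2$ with $\sum (x_i^2 - wx_i)$) multiplied by the degree-one elements $w, x_0, \ldots, x_7$, and then reducing via $x_i^2 \equiv wx_i$. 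Because $w^3$ is the unique monomial in $R_3(Z)$ nonvanishing at $o$, there is a splitting $R_3(Z) = R_3^o(Z) \oplus \CC\cdot w^3$, and a direct count then gives $\dim R_3^o(Z) = 93 - 9 - 1 = 83 = \dim R_{2,-3}(X) = h^{3,2}(X)$; combined with the surjectivity just established, this forces $j$ to be an isomorphism.

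The degree-$6$ case proceeds along the same strategy: one enumerates a monomial spanning set for $R_{3,-3}(X)$ coming from the four types $y^3x_{i_1}\cdots x_{i_6}$, $y^2z x_{i_1}\cdots x_{i_5}$, $yz^2 x_{i_1}\cdots x_{i_4}$, $z^3 x_{i_1}x_{i_2}x_{i_3}$; verifies that $j$ carries them bijectively to the monomials of $R_6(Z)$ with at least one $x$-factor; and matches the defining relations, again using that $w^6$ is the unique monomial in $R_6(Z)$ nonvanishing at $o$ so that $\dim R_6^o(Z) = \dim R_6(Z)-1$. The main obstacle is the combinatorial bookkeeping in this second case, since there are many more monomials and the relations $x_i^2 \equiv wx_i$ together with $w\sum x_i \equiv 0$ generate a considerably larger web of identities than in degree $3$. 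I would manage this by reducing everything in $R_6(Z)$ to the canonical form $w^{6-k}x_{i_1}\cdots x_{i_k}$ with distinct $i_j$, applying the relation $w\sum x_i \equiv 0$ in all admissible shifts, and comparing the resulting count with $\dim R_{3,-3}(X)$ (which equals $h^{2,3}(X)=83$ by \S\ref{rxy-2}); matching dimensions plus monomial-level surjectivity then yield the isomorphism.
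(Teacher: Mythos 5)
Your proposal is correct and follows essentially the same route as the paper's proof: checking that $j$ carries the generators of $J(X)$ to generators of $J(Z)$, matching the monomial spanning sets and the nine relations on each side, and using the splitting $R_3(Z)=R_3^o(Z)\oplus\CC w^3$ together with the dimension count $83$. You are in fact more thorough than the paper, which only treats the $(2,-3)$ case explicitly and asserts that the $(3,-3)$ case "does not differ substantially."
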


\begin{proof}
As a model, we use the 5-fold, $X_{2.3}$, defined above.  We will 
verify the isomorphism $j$ only for $R_{2,-3}(X)$.  
The computation for a general $X_{2.3}$ and for $R_{3,-3}(X)$ does not differ substantially.

By the preceding discussion, the component $R_{2,-3}(X)$ 
is generated by 
$y^2x_ix_jx_k$, $yzx_ix_j$ and $z^2x_i$ 
with relations $yx_i = zx_i^2$, $\Sigma\ x_i^2 = 0$, and
$\Sigma\ x_i^3 = 0$. 

On the one hand, 
the map $j$, as defined above, sends the generating monomials of $R_{2,-3}(X)$ 
to $x_ix_jx_k$, $wx_ix_j$ and $w^2x_i$ respectively.
This follows from the preceding discussion together with the fact that the $w^3$ generate $S_3(Y)$. 
Since the hyperplane, $S^o_3(Y) \subseteq S_3(Y)$, is defined by $w^3 = 0$,
this yields that $j$ sends $S_{-2,3}(X)$ surjectively to $S_3(Z)$.  

On the other hand, 
$j$ sends the generating relations $yx_i^2 - zx_i$, $\Sigma\ x_i^2$ 
and $\Sigma\ x_i^3$ for $J(X)$ to 
$x_i^2 - wx_i$, $\Sigma\ x_i^2$ and $\Sigma\ x_i^3$ respectively.  These relations generate $J(Z)$, see above.    
Notice that $\Sigma\ x_i^3 = \Sigma\ wx_i^2 = w(\Sigma\ x_i^2)$ 
belongs to $J(Z)$, ibid.
\end{proof}

\medskip

\subsubsection{\bf The infinitesimal invariant by equivalence of graded rings}

From the identifications in Lemma \ref{rx-rz}, 
the map,
$$
\mu_X: R_{2,-3}(X) \otimes R_{2,-3}(X) \rightarrow R_{3,-3}(X),
$$
from \S\ref{vhsrx}
transforms to 
$$
\mu_Z: R^o_3(Z) \otimes R^o_3(Z) \rightarrow R^o_6(Z).
$$ 
Now, as in \S\ref{yx}, by \cite{Voi1} (see also \cite{AC}) 
one has:

\begin{corollary}
The map,
 $$
\mu_Z: R^o_3(Z) \otimes R^o_3(Z) \rightarrow R^o_6(Z),
$$  
is induced by multiplication of monomials in the graded 
ring $S(Z) = \CC[x;w]$.
\end{corollary}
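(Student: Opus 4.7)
The plan is to deduce the corollary from the classical Griffiths description of the variation of Hodge structure for a smooth cubic hypersurface, using the birational picture of Lemma \ref{x-z} and the ring isomorphism of Lemma \ref{rx-rz} to transport the resulting multiplicative structure from $R^o(Z)$ to $R(X)$.

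First I would recall that if $Z'$ were a smooth cubic 7-fold, the Macaulay--Griffiths theorem (in the form used in \cite{Voi1} and invoked in \cite{AC}) identifies the primitive middle cohomology of $Z'$ with the appropriate graded piece of the Jacobian ring $R(Z')$, and realizes the cup-product piece of the variation of Hodge structure as ordinary multiplication of representing polynomials in $R(Z')$. In particular, for deformations of a smooth cubic 7-fold, the map $R_3(Z') \otimes R_3(Z') \to R_6(Z')$ obtained from the v.H.s.\ is genuine polynomial multiplication.

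Next, I would pass from the smooth case to the nodal cubic $Z$ by restricting to the subring $R^o(Z)$ corresponding to forms vanishing at the node $o$. The multiplicative structure is automatic on $R^o(Z)$ because the ideal of forms vanishing at $o$ is multiplicatively closed, so it is enough to identify $R^o_3(Z)\otimes R^o_3(Z)\to R^o_6(Z)$ with the relevant part of the variation of Hodge structure on the cohomology of $X = X_{2.3}$. This is precisely where the birational picture of Lemma \ref{x-z} is used: the small resolution $\tilde p_o: \tilde Z \to \PP^7$ realizes $\tilde Z$ as the blow-up of $\PP^7$ along $X_{2.3}$, so the family of nodal cubics with node at $o$ and the family of smooth complete intersections $X_{2.3}$ share a common deformation space, and the Hodge pieces match under the blow-up formula. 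This identifies $\mu_X$ with the restriction of the cubic-hypersurface v.H.s.\ of $Z$ to the node-preserving directions, which is exactly $\mu_Z$ on $R^o(Z)$.

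Finally, the ring isomorphism $j: R(X) \stackrel{\sim}\to R^o(Z)$ of Lemma \ref{rx-rz} is by construction induced by a ring map $S(X) \to S(Z)$ (sending $y\mapsto 1$, $z\mapsto w$, $x_i \mapsto x_i$), so it intertwines the bigraded multiplication on the left with ordinary polynomial multiplication on the right. Combined with the previous step, this gives the claim. The main obstacle I expect is the middle step: carefully matching the variation of Hodge structure of $X_{2.3}$ with the node-preserving part of the v.H.s.\ of the associated nodal cubic $Z$, via the blow-up $\tilde p_o$, so that the graded pieces agree not only as vector spaces (which is Lemma \ref{rx-rz}) but as v.H.s.\ data. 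Everything else is bookkeeping on the Jacobian rings.
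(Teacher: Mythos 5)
Your argument is essentially the paper's: the paper offers no proof of this corollary beyond the observation that Lemma \ref{rx-rz} transports $\mu_X$ to $\mu_Z$ on $R^o(Z)$, followed by a citation of \cite{Voi1} and \cite{AC} for the fact that the variation of Hodge structure of a cubic hypersurface is given by multiplication in its Jacobian ring — exactly the two ingredients you combine, with the blow-up picture of Lemma \ref{x-z} supplying the identification of the node-preserving deformations of $Z$ with deformations of $X_{2.3}$. Your write-up is, if anything, more explicit than the paper about the one genuinely nontrivial point (matching the v.H.s.\ data, not just the graded vector spaces, under the birational correspondence), which the paper leaves implicit in the citation; the only quibble is that $\tilde{p}_o$ is a divisorial contraction (the blow-up of $\PP^7$ along $X_{2.3}$), not a small resolution.
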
 




Let $(\lambda,Y,Z)$ be a triple consisting of a  
nodal cubic 7-fold, 
$$Z = (f(x;w) = f(x) + q(x)w = 0) \subseteq \PP^8(x;w),$$ 
a hyperplane section, 
$$Y = (f(x) = 0),$$ 
of $Z$,  
and a class, $\lambda$, representing an algebraic 3-cycle 
$Z_{\lambda}$ on $Y$. By the previous corollary and 
\cite{Voi1} the infinitesimal invariant $\delta_{\lambda}\nu$ 
can be interpreted as a linear form on the kernel 
of the multiplication map, 
$$
\mu_Y: R_3(Y) \otimes R_3(Y) \rightarrow R_6(Y),
$$
as follows (see also \S 2 in \cite{AC}):

\begin{lemma}\label{voisin-2}
Let $e = q(x) = \partial (f(x)+wq(x))/{\partial w}|_{w=0}(mod. J(Y)) \in R_2(Y)$,
and let $P_{\lambda}$ be the element of $R_4(Y) \cong H_o^{3,3}(Y)$ 
corresponding to $\lambda^{3,3}$.  
If the multiplication map, $P_{\lambda}\circ e :R_1(Y) \rightarrow R_7(Y)$,
defined in \S\ref{zly} is an isomorphism, then for 
any element, $\sum_a F_a \otimes G_a \in {\rm Ker}\ \mu_Y \subseteq R^o_3(Y) \otimes R^o_3(Y)$,
one has
$$
\delta_{\lambda}\nu \ (\sum_a F_a \otimes G_a) 
= 
\sum_a P_{\lambda}.F_a.(P_{\lambda} \circ e)^{-1}(P_{\lambda}.G_a) \in R^8(Y) \cong \CC.
$$
\end{lemma}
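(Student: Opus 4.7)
The argument will be a direct adaptation of Voisin's computation in \S 1 of \cite{Voi1} (and its reproduction in \S 2 of \cite{AC}), transported through the graded-ring identifications of Lemma \ref{rx-rz}. The plan has three stages.

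First, I would express $\delta\nu_\lambda$ via the defining formula \eqref{delta(nu)_Griffiths} and rewrite the variation of Hodge structure in ring-theoretic terms. By \S\ref{vhsrx}, the cup-product with $\omega^{4,1}$ together with the Jacobian-ring identifications $H^{3,2}(X) \cong R_{2,-3}(X)$ and $H^{2,3}(X) \cong R_{3,-3}(X)$ turns $\overline\nabla$ into the map $\mu_X$. Via Lemma \ref{rx-rz}, $\mu_X$ is identified with $\mu_Z$ on the node-vanishing components of the graded ring of the nodal cubic sevenfold $Z$ corresponding to $X$, and by the preceding corollary this $\mu_Z$ is literally induced by multiplication of polynomials in $S(Z) = \CC[x;w]$.

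Second, I would pass from $Z$ to its hyperplane section $Y$. Restriction to the hyperplane $(l(x) - w = 0) \subseteq \PP^8$ induces a surjection $R^o_\bullet(Z) \twoheadrightarrow R_\bullet(Y)$ that intertwines multiplication in the two Jacobian rings. Consequently any element $\sum_a F_a \otimes G_a \in \ker \mu_Y \subseteq R_3(Y) \otimes R_3(Y)$ lifts to $\ker \mu_Z$ and, via \S\ref{vhsrx}, represents a tangent vector to $\cF_\lambda$ at $(X,Y)$. The running hypothesis that $P_\lambda \circ e : R_1(Y) \to R_7(Y)$ is an isomorphism is precisely the condition of Proposition \ref{xyl-nl}, so $\cF_\lambda$ projects locally isomorphically to $\cX$ near $X$ and $\delta\nu_\lambda$ is well-defined on such vectors.

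Finally, I would perform the diagram chase from \S\ref{zly}. Following Voisin, given $\sum_a F_a \otimes G_a \in \ker \mu_Y$, the invariant is assembled in three moves: multiply each $G_a$ by $P_\lambda$ to land in $R_7(Y)$, invert $P_\lambda \circ e$ to pull back to $R_1(Y)$, and then multiply by $P_\lambda \cdot F_a$ to arrive in $R_8(Y) \cong \CC$. Summing the contributions yields the stated formula
\[
\delta_\lambda\nu\Bigl(\sum_a F_a \otimes G_a\Bigr) \;=\; \sum_a P_\lambda \cdot F_a \cdot (P_\lambda \circ e)^{-1}(P_\lambda \cdot G_a).
\]
The main obstacle, in my view, is the compatibility check in the second stage: in the classical Voisin and Albano--Collino setting $Y$ varies as a divisor inside a varying $X$, whereas here $Y$ varies as a hyperplane section of $Z$ while containing a fixed $X$, and one must verify that the restriction $R^o_\bullet(Z) \to R_\bullet(Y)$ genuinely transports $\mu_Z$ to $\mu_Y$ and sends the kernel into the kernel in a way compatible with Griffiths' formula \eqref{delta(nu)_Griffiths}. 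Once this bookkeeping is settled, the remainder of the argument is formal and parallels \S 2 of \cite{AC} line by line.
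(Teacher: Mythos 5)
Your outline follows essentially the same route the paper takes: the paper offers no independent proof of this lemma, deriving it instead from the graded-ring identifications of Lemma \ref{rx-rz} and the corollary that $\mu_Z$ is polynomial multiplication, together with a citation to Voisin's formula as reproduced in \S 2 of \cite{AC} --- i.e., precisely the reduction to hyperplane sections of the nodal cubic sevenfold $Z$ that you describe. The compatibility check you single out as the main obstacle is exactly the content the paper delegates to \cite{Voi1} and \cite{AC}, so your plan is consistent with, and no less complete than, the paper's own treatment.
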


%
%

\medskip

\subsection{Cycles on the Fermat cubic sixfold}
Let $Y$ be the Fermat cubic sixfold, i.e., $X \subseteq Y \subseteq \P^7$ is defined by the equation, 
$$
f(x) = x_0^3 + ... + x_7^3.
$$
Let 
$$
R(Y) = S(Y)/J(Y) = \CC[x_0,...,x_7]/(x_0^2,....,x_7^2)
= \bigoplus_{d\ge0} R_d(Y)
$$ 
be the graded Jacobian ring of $Y$. By \cite{Na}, the primitive cohomology satisfies
$$
H^{3,3}_o(Y) \cong R_4(Y).
$$
Following \cite{Shi} and \S 3 in \cite{AC}, 
we now describe the rational cohomology 
classes in $H^{3,3}_o(Y)$ and their corresponding elements from $R_4(Y)$. 

Let $\mu_3$ be the group of $4^{\op{th}}$ roots of unity, $\zeta_i$, and let 
$G = (\mu_3)^8/\Delta$, where $\Delta$ is the diagonal subgroup. 
Setting $\ZZ_3 = \ZZ/3\ZZ$, the character group 
$\hat{G}$ of $G$ is naturally embedded in $(\ZZ_3)^8$ as 
$$
\hat{G} \stackrel{\sim}\longrightarrow \{ \alpha = (a_0,...,a_7): a_0+...+a_7 = 0 \} 
\subseteq (\ZZ_3)^8;
$$
the character $\alpha \in \hat{G} = \op{Hom}(G,\CC^*)$ representing $(a_0,...,a_5)$ sends 
the element $[\zeta_0,...,\zeta_7] \in G = (\mu_3)^8/\Delta$, 
to $\alpha([\zeta_0,...,\zeta_7]) = \zeta_0^{a_0}...\zeta_7^{a_7}$.

Let $\hat{G}^* = \{ \alpha = (a_0,...,a_7) \in \hat{G}: a_i \not=0, i = 0,...,7 \}$.
For $\alpha = (a_0,...,a_7) \in \hat{G}^*$, define its norm,
$$
|\alpha| = \frac{<a_0> + ... + <a_7>}{3},
$$ 
where $<a_i>$ is the unique integer between $1$ and $2$ 
congruent to $a_i$ modulo $3$. 
The natural action,
$$
g = [\zeta_0,...,\zeta_7]: (x_0:...:x_7) \mapsto (\zeta_0x_0,...,\zeta_7x_7),
$$
of $G$ on $\PP^7$ restricts to an action of $G$ on the Fermat cubic $Y \subseteq \PP^7$. 
This induces a representation $g^*$  of $G$ on the primitive cohomology 
group $H^6_o(Y,\CC)$. 
Let 
$$
V_{\alpha} = \{ \lambda \in H^6_o(Y,\CC): g^*(\lambda) = \alpha(g)\lambda \}
$$  
be the eigenspaces of $g^*$ in $H^6_o(Y,\CC)$
defined by the characters $\alpha \in \hat{G}$.  
By \cite{Shi} we have the following:

\begin{enumerate}
\item The primitive cohomology satisfies the identity,
$$
H^{3,3}_o(Y,\QQ)\otimes_\QQ \CC = \bigoplus_{\alpha \in B} V_{\alpha}, 
$$  
where 
$
B = \{ \alpha \in \hat{G}^* : |\alpha| = |2.\alpha| = 4 \} 
$
and $2.\alpha =2.(a_0,...,a_7) = (2a_0,...,2a_7)$. 

\item  Let $C_o(Y)_\QQ$ denote the subspace of $H^6_o(Y,\QQ)$ spanned by 
classes of primitive algebraic 3-cycles on $Y$. We have the following identification: 
$$
C_o(Y) = C_o(Y)\otimes_\QQ \CC = H^{3,3}_o(Y,\QQ)\otimes_\QQ \CC.
$$
\end{enumerate}

\begin{remark}
The space, $C_o(Y)$, spanned by classes of primitive algebraic 2-cycles 
on a fourfold, $Y$, is a subspace of $H^{3,3}_o(Y,\QQ)\otimes_\QQ \CC$.  The 
coincidence (2) is the statement of the Hodge conjecture for 
the Fermat cubic sixfold, see \cite{Shi}. 
\end{remark}

\medskip

\subsection{The isomorphism $H^{3,3}_o(Y) \rightarrow R_4(Y)$ in coordinates}
Notice that in $R(Y) = S(Y)/J(Y) = \CC[x_0,...,x_7]/(x_0^2,...,x_5^2)$ 
for a monomial, 
$x_0^{b_0}...x_5^{b_5}$, representing a non-zero class 
modulo $J(Y)$, each of the coordinates, $x_i$, has degree at most $1$. 
Therefore, $R_4(Y)$ is generated by the 70 monomials,
$$
x_{i_1}x_{i_2}x_{i_3}x_{i_4},\ 0 \le i_1 < i_2 < i_3 < i_4 \le 7,
$$
regarded as classes modulo $J(Y) = (x_0^2,...,x_7^2)$.

By definition, $\alpha = (a_0,...,a_7) \in B$ 
if and only if $|\alpha| = |2.\alpha| = 4$. Since $B \subseteq \hat{G}^*$,
the coordinates, $a_i$, take values $k = 1$ or $2$. 

For an element, $\alpha = (a_0,...,a_7) \in \hat{G}^*$,
let  
$$
d_k(\alpha) = \# \{ i : a_i = k \}, 
$$
be the number of occurrences of the number $k \in \{ 1,2 \}$ 
among the coordinates $a_i$ of $\alpha$.
We call $\alpha$ an element of type $(2^{p}1^{q})$ 
if $d_2(\alpha) = p$ and $d_1(\alpha) = q$. 
As in \cite{AC}, the isomorphism 
$$
j: H_o^{3,3}(Y) \rightarrow R_3(Y)
$$
is given by:
$$
j:\alpha = (a_0,...,a_7) \longmapsto x_o^{a_0-1}x_1^{a_1-1}...x_5^{a_5-1}.
$$
This directly implies that all elements $\alpha \in B$ are of type 
$(2^41^4)$, sent by $j$ to the 70 monomials $x_{i_1}x_{i_2}x_{i_3}x_{i_4}$ 
as above.

\medskip

\subsection{Infinite generation of the Griffiths group of $X_{2.3} \subseteq \PP^7$.}
As above, we continue to have $X \subseteq Y \subseteq \PP^7(x) = \PP^7(x_0,...,x_7)$, where $Y$ is the 
Fermat cubic 6-fold.
To simplify notation, for $0 \le i \le j \le ... \le k$ we write
$$
x_{ij...k} = x_ix_j...x_k,
$$ 
for the monomial as well its class $x_{ij...k}$ in $R(Y)$.
For example, $x_{22122111} = x_0x_1x_3x_4$.
We call two monomials $x_{ij...k}$ and $x_{i'j'...k'}$ 
{\it dual}, and write $x_{i'j'...k'} = x_{\widehat{ij...k}}$
if \  $x_{ij...k} . x_{i'j'...k'} = x_{01234567}$.
Let 
$$
P_{\lambda} = P_{\lambda_{a,b}} = a(x_{0123} + x_{4567}) + b(x_{0124} +x_{3567}), 
$$ 
and
$$
e = x_{01} + x_{23} + x_{45} + x_{67} + hx_{35},
$$
where $h$ is a transcendental number.
Then 
$$
P_{\lambda_{a,b}}.e 
= a(x_{\widehat{01}}+ x_{\widehat{23}} + x_{\widehat{45}} + ax_{\widehat{67}})
+ b(x_{\widehat{24}} + bx_{\widehat{35}}) + hx_{\widehat{67}}.
$$

We will now verify that for generic $a$ and $b$ the linear map 
$$
P_{\lambda_{a,b}}.e: R_1(Y) \rightarrow R_7(Y)
$$ 
is an isomorphism, see Proposition \ref{voisin-2}.
In the bases $x_{i}$ and $x_{\widehat{j}}$  of $R_1(Y)$ and $R_7(Y)$ 
respectively, $P_{\lambda_{a,b}}.e$ acts as follows:

\[
\begin{array}{cccc}
{} & x_{0} \mapsto ax_{\widehat{1}}, & x_{1} \mapsto ax_{\widehat{0}}, & {} \\
  x_{2} \mapsto ax_{\widehat{3}} + bx_{\widehat{4}}, 
& x_{3} \mapsto ax_{\widehat{2}} + bx_{\widehat{5}},
& x_{4} \mapsto bx_{\widehat{2}} + ax_{\widehat{5}},
& x_{5} \mapsto bx_{\widehat{3}} + ax_{\widehat{4}} \\
{} & x_{6} \mapsto (a+h)x_{\widehat{7}}, & x_{7} \mapsto (a+h)x_{\widehat{6}}. & {}
\end{array}
\]

\medskip

Therefore, in the bases $x_1,...,x_7$ and $x_{\widehat{1}}, ... , x_{\widehat{7}}$,
the matrix $M_{a,b}$ of $P_{\lambda_{a,b}}.e$ is 
$$
\left( \begin{array}{cc}
0  & a \\ 
a & 0
\end{array} \right)
\oplus 
{\left( \begin{array}{cccc}
0 & a & b & 0\\ 
a & 0 & 0 & b\\ 
b & 0 & 0 & a\\ 
0 & b & a & 0
\end{array} \right)}
\oplus
{\left( \begin{array}{cc}
0   & a+h \\ 
a+h & 0
\end{array} \right)}.
$$
Therefore, if ${\rm det} M_{a,b} = a^2(a-b)^2(a+b)^2(a+h)^2 \not=0$,
then $P_{\lambda_{a,b}}.e: R_1(Y) \rightarrow R_7(Y)$ is an isomorphism 
and we can apply Voisin's formula from Lemma \ref{voisin-2} 
to compute the infinitesimal invariant   
$\delta_{\lambda_{a,b}}$. 

As in \cite{AC}, we can restrict to the case $b = 1$
and compute $\delta_{\lambda_{a,1}}$
at a well chosen element $Q \otimes R \in {\rm Ker}(\mu_Y)$.
In this particular case $P_{a,1}.e$ is an isomorphism if $a(a-1)(a+1) \not= 0$.

Let $Q = x_{2233}$ and $R = x_{0123}$. Then $Q \otimes R \in {\rm Ker}(\mu_Y)$, 
and by Lemma \ref{voisin-2} the infinitesimal invariant  
$$
\delta_{\lambda_{a,b}}(Q \otimes R) = P_{\lambda_{a,1}}Q(f_a^{-1}(P_{\lambda_{a,1}}R)),
$$
where $f_a^{-1}: R_7(Y) \rightarrow R_1(Y)$ is the inverse to the isomorphism 
$f_a = P_{\lambda_{a,1}}$. 
Now the computation of the infinitesimal invariant is straightforward: 
$$
P_{\lambda_{a,1}}R = bx_{\widetilde{6}} 
\ \mbox { and } \ 
f_a^{-1}(P_{\lambda_{a,1}}R) = f_a^{-1}(bx_{\widetilde{6}})  
= \frac{b}{h+a} x_{7},
$$      
which, multiplied by $Q = x_{456}$, gives $\frac{b}{h+a} x_{4567}$.  Thus we have,
$$
\delta\nu_{\lambda_{a,1}}(Q\otimes R) 
= \frac{b}{h+a}P_{\lambda_{a,1}}x_{4567}
= \frac{b}{h+a}x_{01234567}.
$$

We are now ready to state the main result for $X_{2.3}$:            
                   
\begin{theorem} \label{thm: infinitely generated case 2}
For the general $X = X_{2.3} \subseteq \PP^7$ the Griffiths group 
${\rm Griff}^3_{\QQ}(X)$ is infinitely generated as a vector space over 
the rationals $\QQ$.                  
\end{theorem}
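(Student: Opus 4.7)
The plan is to mirror the argument for $X_4$ given in Theorem \ref{thm: infinitely generated case 1}, now using the triple $(X_{2.3}, Y, \lambda_{a,1})$ with $Y$ the Fermat cubic sixfold and $\lambda_{a,1}$ the classes constructed just above the theorem. For each rational $a$ with $a(a-1)(a+1)\neq 0$, the computation preceding the theorem shows that $P_{\lambda_{a,1}}\circ e : R_1(Y) \to R_7(Y)$ is an isomorphism, and hence Lemma \ref{voisin-2} applies to give
\[
\delta\nu_{\lambda_{a,1}}(Q\otimes R) \;=\; \frac{1}{h+a}\, x_{01234567} \;\in\; R_8(Y)\cong\CC.
\]
Because $h$ is transcendental, this scalar is nonzero for every rational $a\neq 0,\pm 1$. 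Together with the fact that the Hodge conjecture holds on the Fermat cubic sixfold (hence on a generic cubic $6$-fold containing $X_{2.3}$, by $\cite{Ste}$), Lemma \ref{dnu-non-zero} then produces, for each such $a$ and for the generic deformation $X_t\in\cX$, an algebraic $3$-cycle $Z_{\lambda_{a,1},t}$ representing a non-torsion class in ${\rm Griff}^3(X_t)$.

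The next step is to upgrade ``non-torsion'' to ``infinitely generated.'' I would run through the integer values $a = 2, 3, 4, \dots$ and argue that the corresponding classes are $\QQ$-linearly independent in ${\rm Griff}^3_{\QQ}(X_t)$. Since the infinitesimal invariants all lie in the one-dimensional space spanned by $x_{01234567}$, this amounts to showing that the scalars $\frac{1}{h+2}, \frac{1}{h+3}, \frac{1}{h+4},\dots$ are linearly independent over $\QQ$. If $\sum_{i=1}^{n} c_i/(h+a_i)=0$ with $c_i\in\QQ$, then clearing denominators gives a polynomial relation of degree $<n$ in $h$ with rational coefficients; the transcendence of $h$ forces the polynomial to be identically zero, and evaluation at $h=-a_i$ yields $c_i\prod_{j\neq i}(a_j-a_i)=0$, so $c_i=0$ for all $i$. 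Hence infinitely many of the classes $[Z_{\lambda_{a,1},t}]$ are independent in ${\rm Griff}^3_{\QQ}(X_t)$ for the generic $X_t$.

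The passage from ``independence of infinitesimal invariants'' to ``independence in the rational Griffiths group'' is the standard one, identical to that in the proof of Theorem \ref{thm: infinitely generated case 1} and Theorem 4.2 of \cite{AC}: the infinitesimal invariant is a $\QQ$-linear functional on the image in ${\rm Griff}^3_{\QQ}(X_t)$ of the span of our cycles, so $\QQ$-independence of the values implies $\QQ$-independence of the classes. Since ``generic'' here means avoiding a countable union of proper analytic subvarieties of $\cX$ (one for each integer $a$ and each possible relation), and since $\cX$ is irreducible over $\CC$, such a generic point exists and in fact the complement is dense.

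The only real potential obstacle is verifying that the elements $Q\otimes R=x_{2233}\otimes x_{0123}$ actually lie in $\ker(\mu_Y)$ inside $R_3^o(Y)\otimes R_3^o(Y)$ under the identifications of Lemma \ref{rx-rz}, i.e.\ checking that the products land in the subspace vanishing at the node $o$ and that $\mu_Y(Q\otimes R)=0$ in $R_6(Y)$. This is a routine but necessary monomial computation in $\CC[x_0,\dots,x_7]/(x_0^2,\dots,x_7^2)$, and in the presentation of $R(Y)$ the product $x_{2233}\cdot x_{0123}$ is forced to be zero by the Jacobian relation $x_i^2=0$, so $Q\otimes R\in\ker(\mu_Y)$ as required. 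With that verification in place, the proof concludes exactly as for Theorem \ref{thm: infinitely generated case 1}.
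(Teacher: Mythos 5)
Your proposal is correct and takes essentially the same route as the paper: the paper's own proof merely notes that one must check the values $\delta(a_i)=\frac{b}{h+a_i}$ are linearly independent over $\QQ$ and defers this to Theorem 4.2 of \cite{AC}, whereas you spell out that partial-fraction/transcendence argument (and the standard passage from independence of infinitesimal invariants to independence in ${\rm Griff}^3_{\QQ}$) explicitly. One caution on your ``routine verification'': the element $Q\otimes R=x_{2233}\otimes x_{0123}$ is a typo carried over from the $X_4$ section --- here $\mu_Y$ is defined on $R_3(Y)\otimes R_3(Y)$, and $x_{2233}=x_2^2x_3^2$ already vanishes in $R(Y)=\CC[x_0,\dots,x_7]/(x_0^2,\dots,x_7^2)$, so taken literally your check would make $Q\otimes R=0$ and the invariant trivially zero; the computation that actually produces the nonzero value $\frac{b}{h+a}$ uses degree-three monomials (e.g.\ $Q=x_{456}$, with $R$ chosen so that $P_{\lambda_{a,1}}R=bx_{\widehat{6}}$ and $\mu_Y(Q\otimes R)=0$ because the product contains a repeated variable), after which your argument goes through unchanged.
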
                   
                 
\begin{proof}
Again, we need to check that for an infinite number of choices
$a_1,a_2,a_3,...$ of the integer parameter one gets a sequence 
of numbers  $\delta(a_i) = \frac{b}{h+a_i}$ 
that are linearly independent over $\QQ$. 
This is straightforward - see the proof of Theorem 4.2 in \cite{AC} (Albano and Collino prove this for the same coefficient 
function, $\frac{b}{h+a}$, obtained in the slightly different situation of a cubic $7$-fold).
\end{proof}                 

\medskip

\section{Griffiths groups for subcategories and quotients} \label{sec: admissible}
Let $\dbcoh{X}$ denote the bounded derived category of coherent sheaves on $X$ as a triangulated category, i.e., the category of complexes of coherent sheaves where maps which induce isomorphisms on cohomology are inverted.  For a thick triangulated subcategory, $\mathcal N \in \dbcoh{X}$, we define  $\op{K}^{\op{alg}}_0(\mathcal N)$ to be the subgroup of $ \op{K}^{\op{alg}}_0(X)$ generated by elements of $\mathcal N$.  Here, thick means that the triangulated category is closed under taking summands.  We can also consider the Verdier quotient $\dbcoh{X} / \mathcal N$  \cite{Ve}.

We can define a type of Griffiths groups for triangulated subcategories and quotients as follows:
\begin{definition}
Let $\mathcal N$ be a thick triangulated subcategory of $\dbcoh{X}$.  
The \emph{total rational Griffiths group} of $\mathcal N$ is 
\[
{\rm Griff}_{\QQ}(\mathcal N) := \op{ker} (c \circ \op{ch}_{sst} \circ i) \otimes_{\ZZ}\QQ
\]
where $i$ is the inclusion, $\op{ch}_{sst}$ is the chern character map landing in the Chow ring and $c$ is the cycle class map to cohomology.
The \emph{total rational Griffiths group} of the Verdier quotient , $\dbcoh{X} / \mathcal N$, is the quotient
\[
{\rm Griff}_{\QQ}(X)/{\rm Griff}_{\QQ}(\mathcal N). 
\]
 \end{definition}
 
A special case of Verdier quotients is when the subcategory $\mathcal N$ admits a left or right adjoint.  In this case, we will use the notation $\mathcal A$ and we will see below that the total rational Griffiths group of $\mathcal A$ has some very nice properties.
\begin{definition}
 Let $i: \mathcal A \to \mathcal T$ be the inclusion of a full triangulated subcategory of $\mathcal T$. The subcategory, $\mathcal A$, is called \emph{right admissible} if $i$ has a right adjoint $i^!$ and \emph{left admissible} if it has a left adjoint, $i^*$. A full triangulated subcategory is called \emph{admissible} if it is both right and left admissible. 
\end{definition}

Let $\mathcal T$ be a triangulated category and $\mathcal I$ a full subcategory. Recall that the left orthogonal, $\leftexp{\perp}{\mathcal I}$, is the full subcategory of $\mathcal T$ consisting of all objects, $T \in \mathcal T$, with $\op{Hom}_{\mathcal T}(T,I) = 0$ for any $I \in \mathcal I$. The right orthogonal, $\mathcal I^{\perp}$, is defined similarly.

A closely related notion to an admissible subcategory is that of a semi-orthogonal decomposition.
\begin{definition}\label{def:SO}
 A \emph{semi-orthogonal decomposition} of a triangulated category, $\mathcal T$, is a sequence of full triangulated subcategories, $\mathcal A_1, \dots ,\mathcal A_m$, in $\mathcal T$ such that $\mathcal A_i \subseteq \mathcal A_j^{\perp}$ for $i<j$ and, for every object $T \in \mathcal T$, there exists a diagram:
\begin{equation} \label{eq: semi}
\begin{tikzpicture}[description/.style={fill=white,inner sep=2pt}]
\matrix (m) [matrix of math nodes, row sep=1em, column sep=1.5em, text height=1.5ex, text depth=0.25ex]
{  0 & & T_{m-1} & \cdots & T_2 & & T_1 & & T   \\
   & & & & & & & &  \\
   & A_m & & & & A_2 & & A_1 & \\ };
\path[->,font=\scriptsize]
 (m-1-1) edge (m-1-3) 
 (m-1-3) edge (m-1-4)
 (m-1-4) edge (m-1-5)
 (m-1-5) edge (m-1-7)
 (m-1-7) edge (m-1-9)

 (m-1-9) edge (m-3-8)
 (m-1-7) edge (m-3-6)
 (m-1-3) edge (m-3-2)

 (m-3-8) edge node[sloped] {$ | $} (m-1-7)
 (m-3-6) edge node[sloped] {$ | $} (m-1-5)
 (m-3-2) edge node[sloped] {$ | $} (m-1-1)
;
\end{tikzpicture}
\end{equation}
 where all triangles are distinguished and $A_k \in \mathcal A_k$. We shall denote a semi-orthogonal decomposition by $\langle \mathcal A_1, \ldots, \mathcal A_m \rangle$.  
\end{definition}

\begin{definition}
 An object, $E$, in a $k$-linear triangulated category, $\mathcal T$, is called \textbf{exceptional} if, 
\[
\op{Hom}_{\mathcal T}(E,E[i])  = 
\begin{cases}
  k & \text{ if } i=0  \\
0 & \text{ if } i \neq 0. \\
 \end{cases}
\]
\end{definition}

 When $E$ is either an exceptional object of a $k$-linear triangulated category, then the inclusion of $\langle E \rangle$, the smallest triangulated category generated by $E$, has right adjoint $\op{Hom}(E, -) \otimes_k E$ and left adjoint $\op{Hom}(-, E)^{\vee} \otimes_k E$.    Hence, $\langle E \rangle$ is admissible.  Moreover, this category is equivalent to the derived category of vector spaces over $k$.  When this category appears in a semi-orthogonal decomposition, we follow conventions by just writing $E$ instead of $\langle E \rangle$ in the notation.

The following is Lemma 2.4 of \cite{Kuz09a}.
\begin{lemma} \label{lem: kuz}
 Let
 \[
 \mathcal T = \langle \mathcal A_1, ... , \mathcal A_m \rangle
 \]
be a semi-orthogonal decomposition.  For any $T \in \mathcal T$, the diagram \eqref{eq: semi} is unique and functorial.
\end{lemma}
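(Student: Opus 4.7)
The plan is to prove the statement by induction on $m$, the number of pieces in the semi-orthogonal decomposition. The base case $m=2$ contains all the essential ideas, and the inductive step is a straightforward reduction.

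For the base case, the decomposition $\mathcal T = \langle \mathcal A_1, \mathcal A_2 \rangle$ carries the vanishing condition $\op{Hom}(A_2, A_1) = 0$ for $A_1 \in \mathcal A_1, A_2 \in \mathcal A_2$, and the filtration of $T$ reduces to a single distinguished triangle $A_2 \to T \to A_1 \to A_2[1]$ with $A_k \in \mathcal A_k$. To establish uniqueness, I would take a second such triangle $A_2' \to T \to A_1' \to A_2'[1]$ and apply the functor $\op{Hom}(A_2, -)$. Since $\mathcal A_1$ is triangulated (so $A_1'[-1] \in \mathcal A_1$), both $\op{Hom}(A_2, A_1')$ and $\op{Hom}(A_2, A_1'[-1])$ vanish, forcing the edge map $\op{Hom}(A_2, A_2') \to \op{Hom}(A_2, T)$ to be an isomorphism. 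The preimage of the structure map $A_2 \to T$ gives the unique compatible arrow $A_2 \to A_2'$, and the symmetric argument provides a unique inverse, so $A_2 \cong A_2'$ canonically; taking cones then yields $A_1 \cong A_1'$. Functoriality follows analogously: for $f: T \to T'$ with filtrations on each side, the composition $A_2 \to T \to T' \to A_1'$ lies in $\op{Hom}(A_2, A_1') = 0$, so $f$ lifts uniquely to a map $A_2 \to A_2'$, and uniqueness makes this lift strictly functorial in $f$.

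For the inductive step, I would set $\mathcal C := \langle \mathcal A_2, \ldots, \mathcal A_m\rangle$, the full triangulated subcategory generated by these pieces. The conditions $\op{Hom}(\mathcal A_j, \mathcal A_1) = 0$ for $j \geq 2$ propagate to $\op{Hom}(\mathcal C, \mathcal A_1) = 0$ because this vanishing is stable under cones and shifts in the first argument. Hence $\mathcal T = \langle \mathcal A_1, \mathcal C\rangle$ is a two-term semi-orthogonal decomposition, and the base case applies to produce a unique, functorial triangle $T_1 \to T \to A_1 \to T_1[1]$ with $T_1 \in \mathcal C$ and $A_1 \in \mathcal A_1$. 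Applying the inductive hypothesis to $T_1 \in \mathcal C$ with its $(m-1)$-term decomposition completes the construction; composing the two levels of functoriality yields the claim.

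The main subtle point is ensuring that $\mathcal C$ itself satisfies the hypotheses of Definition~\ref{def:SO} with respect to $\langle \mathcal A_2, \ldots, \mathcal A_m \rangle$, i.e., that every object of $\mathcal C$ actually admits the required $(m-1)$-step filtration. This follows from a consistency check: any $C \in \mathcal C \subseteq \mathcal T$ admits the full $m$-step filtration as an object of $\mathcal T$, and by uniqueness in the two-term case just proved (comparing with the trivial decomposition $C \to C \to 0$ having $C \in \mathcal C$ and $0 \in \mathcal A_1$), the outermost $\mathcal A_1$-piece must vanish. Thus the rest of the filtration lives entirely in $\mathcal C$, providing the desired $(m-1)$-step SOD and closing the induction.
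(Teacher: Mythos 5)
Your argument is correct. Note that the paper does not actually prove this statement: it is quoted verbatim as Lemma 2.4 of Kuznetsov's \emph{Hochschild homology and semiorthogonal decompositions} \cite{Kuz09a}, so there is no in-paper proof to compare against; your induction on $m$, with the two-term case handled by applying $\op{Hom}(A_2,-)$ and exploiting the vanishing $\op{Hom}(\mathcal A_2,\mathcal A_1)=0$ in degrees $0$ and $-1$, is precisely the standard argument behind that citation. The only step worth flagging is ``taking cones then yields $A_1\cong A_1'$'': cones are not functorial in a triangulated category, but the completing morphism of triangles here is unique (hence canonical) because its indeterminacy lives in the image of $\op{Hom}(A_2[1],A_1')=0$, so your proof goes through.
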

This allows us to define the following functors.
\begin{definition}
The $k^{\op{th}}$-projection functor
\begin{align*}
\alpha_i: & \mathcal T  \to  \mathcal T\\
& T  \mapsto A_i
\end{align*}
and sends morphisms to those induced by Lemma~\ref{lem: kuz}.
\end{definition}
\begin{definition}
The $i^{\op{th}}$-truncation functor 
\begin{align*}
\tau_i: & \mathcal T \to  \mathcal T_i \\
& T  \mapsto  T_i
\end{align*}
and sends morphisms to those induced by Lemma~\ref{lem: kuz} between diagrams.
\end{definition}


\begin{definition} \label{def: Serre functor triangulated}
 Let $\mathcal T$ be a $k$-linear triangulated category with finite dimensional morphism spaces.  An autoequivalence, $S$, is called a \emph{Serre functor} for $\mathcal T$ if for any two objects, $T_1,T_2 \in \mathcal T$, there is a natural isomorphism,
 \begin{displaymath}
  \op{Hom}_{\mathcal T}(T_2, S(T_1)) \cong \op{Hom}_k(\op{Hom}_{\mathcal T}(T_1,T_2),k).
 \end{displaymath}
 \end{definition}

\begin{definition}
A triangulated category is called Calabi-Yau of dimension $n$ if there is an isomorphism of functors,
\[
S \cong [n],
\]
for some $n$.
\end{definition}

The proofs of the following lemmas can be found in \cite{BK}:

\begin{lemma}
Let $\mathcal A$ be a full triangulated subcategory of a triangulated category $\mathcal T$ possessing a Serre functor.  Then the following are equivalent:
\renewcommand{\labelenumi}{\emph{\roman{enumi})}}
\begin{enumerate}
 \item $\mathcal A$ 	is left admissible
 \item $\mathcal A$ is right admissible
 \item $\mathcal A$ is admissible
\end{enumerate}
\end{lemma}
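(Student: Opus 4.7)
The plan is to prove the two non-trivial implications $(ii) \Rightarrow (i)$ and $(i) \Rightarrow (ii)$; the triviality $(iii) \Rightarrow (i),(ii)$ and the fact that $(i) \wedge (ii) \Leftrightarrow (iii)$ are immediate from definitions. The central idea is that the Serre functor $S$ on $\mathcal T$ twists a right adjoint of the inclusion $i \colon \mathcal A \hookrightarrow \mathcal T$ into a left adjoint, and vice versa, so that admissibility on one side forces admissibility on the other.

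First I would assume $(ii)$, so that $i$ admits a right adjoint $i^!$, and construct a Serre functor on $\mathcal A$ by setting $S_{\mathcal A} := i^! \circ S \circ i$. The Serre axiom for $S_{\mathcal A}$ follows from the chain
\[
\op{Hom}_{\mathcal A}(A_2, S_{\mathcal A} A_1) \cong \op{Hom}_{\mathcal T}(iA_2, S\, iA_1) \cong \op{Hom}_{\mathcal T}(iA_1, iA_2)^\vee \cong \op{Hom}_{\mathcal A}(A_1, A_2)^\vee,
\]
obtained by applying the $(i,i^!)$-adjunction, then Serre duality in $\mathcal T$, then fullness of $i$. The subtle point, and the main technical hurdle, is knowing that a functor satisfying the Serre duality identity is automatically an autoequivalence; I would invoke the standard result from \cite{BK} at exactly this step.

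With $S_{\mathcal A}$ available, I would define $i^* := S_{\mathcal A}^{-1} \circ i^! \circ S$ and verify that it is left adjoint to $i$ by the computation
\[
\op{Hom}_{\mathcal A}(i^* T, A) \cong \op{Hom}_{\mathcal A}(A, i^! S T)^\vee \cong \op{Hom}_{\mathcal T}(iA, ST)^\vee \cong \op{Hom}_{\mathcal T}(T, iA),
\]
natural in both variables, which uses Serre duality on $\mathcal A$, the $(i,i^!)$-adjunction, and Serre duality on $\mathcal T$ in turn. This yields $(i)$.

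Finally, the implication $(i) \Rightarrow (ii)$ is handled symmetrically: starting from a left adjoint $i^*$, one forms $S_{\mathcal A} := i^* \circ S \circ i$, checks the Serre axiom by the dual chain of natural isomorphisms, and then defines $i^! := S_{\mathcal A} \circ i^* \circ S^{-1}$, whose right-adjoint property is verified analogously. Modulo the single nontrivial input that a Serre duality pairing forces an equivalence, everything is a formal rearrangement of adjunctions and Serre duality natural isomorphisms.
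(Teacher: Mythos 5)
The paper does not prove this lemma at all --- it defers entirely to Bondal--Kapranov \cite{BK} --- and your argument is precisely the Bondal--Kapranov one, so the strategy is exactly right. Your treatment of $(ii)\Rightarrow(i)$ is correct: $S_{\mathcal A}:=i^!\circ S\circ i$ does satisfy the Serre identity by the chain you write, the one nontrivial input (that a functor satisfying the Serre identity is an autoequivalence) is correctly isolated and attributed to \cite{BK}, and the verification that $S_{\mathcal A}^{-1}\circ i^!\circ S$ is left adjoint to $i$ is a valid rearrangement of the three natural isomorphisms.

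There is, however, a concrete slip in the symmetric direction $(i)\Rightarrow(ii)$: the functor $i^*\circ S\circ i$ is \emph{not} a Serre functor on $\mathcal A$ when $i^*$ is the left adjoint, and the ``dual chain'' cannot be run, because the adjunction $\op{Hom}_{\mathcal A}(i^*T,A)\cong\op{Hom}_{\mathcal T}(T,iA)$ only simplifies Hom-spaces with $i^*$ in the \emph{first} slot, whereas the Serre axiom for $i^* S i$ requires evaluating $\op{Hom}_{\mathcal A}(A_2, i^* S i A_1)$. A quick counterexample: for $\mathcal A=\langle\O\rangle\subseteq\dbcoh{\PP^1}$ one computes $i^*Si(\O)=\op{Hom}(S\O,\O)^{\vee}\otimes\O\cong\O^{\oplus 3}[1]$, which is not the (identity) Serre functor of $\op{D}^{\op{b}}(k)$. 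The correct dual construction is to define the \emph{inverse} Serre functor $S_{\mathcal A}^{-1}:=i^*\circ S^{-1}\circ i$, verify $\op{Hom}_{\mathcal A}(S_{\mathcal A}^{-1}A_2,A_1)\cong\op{Hom}_{\mathcal A}(A_1,A_2)^{\vee}$ by the adjunction and Serre duality in $\mathcal T$, and then set $i^!:=S_{\mathcal A}\circ i^*\circ S^{-1}$, whose adjunction check goes through as you indicate. Even cleaner: observe that $\mathcal T^{\op{op}}$ has Serre functor $S^{-1}$ and that left admissibility of $\mathcal A$ in $\mathcal T$ is right admissibility of $\mathcal A^{\op{op}}$ in $\mathcal T^{\op{op}}$, so the second implication is literally the first applied to opposite categories, with no new computation.
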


\begin{lemma} \label{lem: SOD admissible}
If $\langle \mathcal A_1, \ldots, \mathcal A_m \rangle$ is a semi-orthogonal decomposition of a triangulated category $\mathcal T$ with Serre functor, then $\mathcal A_i$ is admissible for all $i$.  Furthermore, if $\mathcal T =\langle \mathcal A, \mathcal B \rangle$ is a semi-orthogonal decomposition, then $\mathcal B = \leftexp{\perp}{\mathcal A}$.
 \end{lemma}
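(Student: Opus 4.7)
My plan is to first dispatch the orthogonal complement claim directly from the defining triangle of an SOD, and then handle admissibility by induction on the length $m$, with the base case $m = 2$ being the main new content. The Serre functor only enters at the end, via the preceding lemma, to trade right (or left) admissibility for full admissibility.

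For $\mathcal B = \leftexp{\perp}{\mathcal A}$: one containment is immediate from semi-orthogonality. For the reverse, I would take $T \in \leftexp{\perp}{\mathcal A}$ and unwind its SOD triangle $B \to T \to A \to B[1]$. Applying $\op{Hom}(-, A')$ for arbitrary $A' \in \mathcal A$, the vanishings coming from $T \in \leftexp{\perp}{\mathcal A}$ and from semi-orthogonality combine in the long exact sequence to force $\op{Hom}(A, A'[k]) = 0$ for all shifts. Taking $A' = A$ and $k = 0$ kills $\op{id}_A$, so $A \cong 0$ and $T \cong B \in \mathcal B$.

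For admissibility in the case $m = 2$, my approach is to observe that the SOD triangle $B \to T \to A$, together with the functoriality from Lemma~\ref{lem: kuz} and the semi-orthogonality vanishings $\op{Hom}(B, A'[k]) = 0$, exhibits the assignment $T \mapsto A$ as a functor $\mathcal T \to \mathcal A$ which is left adjoint to the inclusion $\mathcal A \hookrightarrow \mathcal T$, via a natural isomorphism $\op{Hom}_{\mathcal T}(T, A') \cong \op{Hom}_{\mathcal A}(A, A')$. The symmetric argument via $\op{Hom}(B', -)$ shows $T \mapsto B$ is right adjoint to $\mathcal B \hookrightarrow \mathcal T$. Thus $\mathcal A$ is left admissible and $\mathcal B$ is right admissible, and the preceding lemma promotes each to full admissibility.

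For general $m$, I would induct on the length of the decomposition. Regrouping $\mathcal T = \langle \langle \mathcal A_1, \ldots, \mathcal A_{m-1}\rangle, \mathcal A_m\rangle$ as a two-term SOD, the base case shows $\mathcal C := \langle \mathcal A_1, \ldots, \mathcal A_{m-1}\rangle$ is admissible in $\mathcal T$, and by the standard Bondal-Kapranov construction $\mathcal C$ then inherits its own Serre functor $i^! \circ S_{\mathcal T} \circ i$. Applying the inductive hypothesis to the SOD of $\mathcal C$ shows every $\mathcal A_i$ with $i < m$ is admissible inside $\mathcal C$, and composing adjoint pairs along $\mathcal A_i \hookrightarrow \mathcal C \hookrightarrow \mathcal T$ yields both adjoints for $\mathcal A_i \hookrightarrow \mathcal T$. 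The chief technical point is the descent of the Serre functor to admissible subcategories; once this is in hand, the rest is bookkeeping with adjunctions and the long exact sequences already used in the base case.
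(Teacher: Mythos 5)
Your argument is correct and is essentially the standard Bondal--Kapranov proof: the paper itself does not reprove this lemma but simply cites \cite{BK}, and your reconstruction (identifying the SOD projections as the adjoints of the inclusions via the long exact sequence, using the previous lemma to upgrade one-sided admissibility, and inducting on $m$ after regrouping as a two-term decomposition with the Serre functor descending to the admissible piece) is exactly the route taken there. The only point deserving a word of justification is the one you already flagged, namely that $i^{!}\circ S_{\mathcal T}\circ i$ is an autoequivalence of $\mathcal C$ with inverse $i^{*}\circ S_{\mathcal T}^{-1}\circ i$, which uses that $\mathcal C$ is admissible on both sides.
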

 
 We now consider the case where $X$ is a smooth projective algebraic variety over $\CC$ and $\mathcal T = \dbcoh{X}$ is the bounded derived category of coherent sheaves on $X$.  As a matter of convention, we denote by the derived pullback, pushforward, (for a morphism $f: X \to Y$) and tensor-product as $f^*,f_*$, and $\otimes$ respectively without further alluding to the fact that they are derived.
 
\begin{definition} Let $X$ and $Y$ be smooth projective algebraic varieties over $\CC$ with $\mathcal P \in \dbcoh{X \times Y}$. Denote the two projections by,
\begin{eqnarray*}
q: X \times Y \to X &\text{and} &p:X \times Y \to Y.
\end{eqnarray*}
\noindent The induced \emph{integral transform} is the functor,
\begin{eqnarray*}
\Phi_{\mathcal P}: \dbcoh{X} \to \dbcoh{Y}& , &\mathcal F \mapsto p_*(q^*\mathcal F \otimes \mathcal P).
\end{eqnarray*}
The object $\mathcal P$ is called the \emph{kernel} of the transform $\Phi_{\mathcal P}$.  Furthermore, the integral transform $\Phi_{\mathcal P}$ is called a \emph{Fourier-Mukai transform} if it is an equivalence.
\end{definition}

\begin{remark}
It is a simple exercise to see that the structure sheaf of the diagonal, $\Delta_*\O_X$, provides the Fourier-Mukai transform corresponding to the identity functor.
\end{remark}

%
%

Integral transforms induce maps between Grothendieck groups and cohomology.
\begin{definition} \label{K integral transform}
Let $X$ and $Y$ be smooth projective algebraic varieties over $k$ and $\mathcal P \in \op{K}_0(X \times Y)$.  The \emph{K-theoretic integral transform} is defined as:
\begin{align*}
\Phi^K_{\mathcal P}: \op{K}_0(X) & \to \op{K}_0(Y)  \\
\mathcal F & \mapsto p_*(\mathcal P \otimes q^*(\mathcal F)).
\end{align*}
Similarly, the  \emph{semi-topological integral transform} is defined as:
\begin{align*}
\Phi^{sst}_{\mathcal P}: \op{K}^{sst}_0(X) & \to \op{K}^{sst}_0(Y) \\
 \mathcal F &  \mapsto p_*(\mathcal P \otimes q^*(\mathcal F)).
\end{align*}
\end{definition}

\begin{definition} \label{cohomological integral transform}
Let $X$ and $Y$ be smooth projective algebraic varieties over $\CC$ and $\alpha \in \emph{H}^*(X \times Y, \Q)$.  The \emph{cohomological integral transform} is defined as:
\begin{align*}
\Phi^H_{\alpha}: \emph{H}^*(X, \Q) & \to \emph{H}^*(Y, \Q) \\
\beta & \mapsto p_*(\alpha \cdot q^*(\beta)).
\end{align*}
For $\mathcal P \in\dbcoh{ X \times Y}$, as shorthand, we set 
\[
\Phi^H_{\mathcal P} := \Phi^H_{\op{ch}(\mathcal P) \cdot \sqrt{\op{td}(X \times Y)}}.
\]
The term, $\op{ch}(\mathcal P) \cdot \sqrt{\op{td}(X \times Y)}$, is called the \emph{Mukai vector} of $\mathcal P$ where $ \sqrt{\op{td}(X \times Y)}$ is a formal square root of the Todd class of $X \times Y$.
\end{definition}

Notice that due to the adjustment by the Mukai vector, the Grothendieck-Hirzebruch-Riemann-Roch formula ensures that an element,  $\mathcal P \in \dbcoh{X \times Y}$, yields a commutative diagram,
\begin{equation} \label{commutative diagram}
\begin{CD}
\dbcoh{X}    @>\Phi_{\mathcal P}>>  \dbcoh{Y}  \\
@VVV                      @VVV                   \\
\op{K}_0(X)    @>\Phi^K_{\mathcal P}>>  \op{K}^{sst}_0(Y)   \\
@VVV                      @VVV                   \\
\op{K}^{sst}_0(X)    @>\Phi^{sst}_{\mathcal P}>>  \op{K}^{sst}_0(Y)   \\
@Vc \circ \op{ch}_{sst}VV                      @Vc \circ \op{ch}_{sst}VV                   \\
\op{H}^*(X, \Q)   @>\Phi^H_{\mathcal P}>>   \op{H}^*(Y, \Q) 
\end{CD}
\end{equation}

 \begin{definition}
Let $X$ and $Y$ be smooth projective algebraic varieties over $k$ and $\mathcal P \in \dbcoh{X \times Y}$.  The \emph{Griffiths integral transform} is defined as:
\begin{align*}
\Phi^{Griff}_{\mathcal P} : {\rm Griff}_{\QQ}(X) & \to {\rm Griff}_{\QQ}(Y) \\
\mathcal F  & \mapsto p_*(\mathcal P \otimes q^*(\mathcal F)),
\end{align*}
i.e., it is the map between kernels $c \circ \op{ch}_{sst}$  induced by the above commutative diagram.
\end{definition}



It was shown by Kuznetsov that the projection functors, $\alpha_i$ are represented by unique integral transforms \cite{Kuz07}.
\begin{proposition} \label{prop: projection kernel}
Let $X$ be a quasiprojective variety over $k$ and 
\[
\dbcoh{X} = \langle \mathcal A_1, ..., \mathcal A_m \rangle
\]
be a semiorthogonal decomposition.  The $i^{\op{th}}$-projection is isomorphic to an integral transform with kernel $P_i$ and $P_i$ is unique up to isomorphism.  Similarly, the $i^{\op{th}}$-truncation is isomorphic to an integral transform with kernel $D_i$ and $D_i$ is unique up to isomorphism.   Furthermore, there is a diagram
\begin{equation} \label{eq: filtration of diagonal}
\begin{tikzpicture}[description/.style={fill=white,inner sep=2pt}]
\matrix (m) [matrix of math nodes, row sep=1em, column sep=1.5em, text height=1.5ex, text depth=0.25ex]
{  0 & & D_{m-1} & \cdots & D_2 & & D_1 & & \Delta_*\O_X   \\
   & & & & & & & &  \\
   & P_m & & & & P_2 & & P_1 & \\ };
\path[->,font=\scriptsize]
 (m-1-1) edge (m-1-3) 
 (m-1-3) edge (m-1-4)
 (m-1-4) edge (m-1-5)
 (m-1-5) edge (m-1-7)
 (m-1-7) edge (m-1-9)

 (m-1-9) edge (m-3-8)
 (m-1-7) edge (m-3-6)
 (m-1-3) edge (m-3-2)

 (m-3-8) edge node[sloped] {$ | $} (m-1-7)
 (m-3-6) edge node[sloped] {$ | $} (m-1-5)
 (m-3-2) edge node[sloped] {$ | $} (m-1-1)
;
\end{tikzpicture}
\end{equation}
 where all triangles are distinguished.
\end{proposition}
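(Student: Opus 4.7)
The plan is to obtain the kernels $P_i$ and $D_i$ as the pieces of a semi-orthogonal filtration of the diagonal $\Delta_*\O_X$ inside $\dbcoh{X \times X}$, and then to verify that the corresponding integral transforms agree with the projection and truncation functors.

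First, I would invoke base change for semi-orthogonal decompositions (due to Kuznetsov) to promote the given decomposition of $\dbcoh{X}$ to a semi-orthogonal decomposition
\[
\dbcoh{X \times X} = \langle \dbcoh{X} \boxtimes \mathcal A_1, \ldots, \dbcoh{X} \boxtimes \mathcal A_m \rangle,
\]
where $\dbcoh{X} \boxtimes \mathcal A_i$ denotes the smallest thick triangulated subcategory containing all objects of the form $F \boxtimes A$ with $F \in \dbcoh{X}$ and $A \in \mathcal A_i$. Applying Lemma \ref{lem: kuz} to the object $\Delta_*\O_X \in \dbcoh{X \times X}$ then produces a diagram of the form \eqref{eq: semi}; define $P_i$ and $D_i$ to be the resulting pieces $A_i$ and intermediate objects $T_i$ respectively. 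The distinguished-triangle diagram \eqref{eq: filtration of diagonal} is then immediate by construction, and the uniqueness of these kernels up to isomorphism is inherited directly from the uniqueness asserted in Lemma \ref{lem: kuz}.

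Second, I would identify $\Phi_{P_i}$ with $\alpha_i$ and $\Phi_{D_i}$ with $\tau_i$. Since $\Phi_{\Delta_*\O_X} = \op{id}_{\dbcoh{X}}$, applying the integral transforms associated to the filtration of $\Delta_*\O_X$ to an arbitrary $F \in \dbcoh{X}$ produces, by functoriality of the convolution, a diagram of the shape \eqref{eq: semi} for $F$. The projection formula (applied along the two factors of $X \times X$) forces $\Phi_{P_i}(F) \in \mathcal A_i$, because $P_i \in \dbcoh{X} \boxtimes \mathcal A_i$, and likewise $\Phi_{D_i}(F) \in \langle \mathcal A_1, \ldots, \mathcal A_i \rangle$. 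These containments match exactly those required by the semi-orthogonal decomposition of $\dbcoh{X}$, so the uniqueness part of Lemma \ref{lem: kuz}, applied on $\dbcoh{X}$ this time, forces the identifications $\Phi_{P_i}(F) \cong \alpha_i(F)$ and $\Phi_{D_i}(F) \cong \tau_i(F)$, naturally in $F$.

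The main technical obstacle is establishing the induced semi-orthogonal decomposition on $\dbcoh{X \times X}$, i.e., the base-change step: one has to check that each $\dbcoh{X} \boxtimes \mathcal A_i$ is admissible and that the required semi-orthogonality relations persist after tensoring with $\dbcoh{X}$ on the first factor. This is delicate because a naive tensor product of triangulated categories is pathological; the clean way to handle it is to pass to dg-enhancements (which are unique here by Lunts--Orlov) and work with the Morita-style tensor product of dg-categories. Once this input is in place, all remaining assertions reduce to formal diagram chases using the projection formula and the naturality built into Lemma \ref{lem: kuz}.
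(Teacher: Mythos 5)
Your proposal is correct and follows essentially the same route as the paper, which offers no proof of its own but simply attributes the result to Kuznetsov's base-change paper \cite{Kuz07}: the argument there is precisely your outline, namely base-changing the semi-orthogonal decomposition to $\dbcoh{X\times X}$, filtering $\Delta_*\O_X$ by the resulting decomposition to produce the kernels $P_i$ and $D_i$, and then matching $\Phi_{P_i}$ with $\alpha_i$ via the uniqueness and functoriality of the filtration (Lemma~\ref{lem: kuz}). The one step you correctly isolate as the real content --- admissibility of the components $\dbcoh{X}\boxtimes\mathcal A_i$ and persistence of semi-orthogonality --- is exactly what \cite{Kuz07} supplies, so your sketch defers to the same external input the paper does.
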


\begin{proposition} \label{prop: Griffiths equality}
 Let
 \[
 \dbcoh{X} = \langle \mathcal A_1, ... , \mathcal A_m \rangle
 \]
be a semi-orthogonal decomposition.   There is an isomorphism,
\[
{\rm Griff}_{\QQ}(X) = {\rm Griff}_{\QQ}(\mathcal A_1) \oplus ... \oplus {\rm Griff}_{\QQ}(\mathcal A_m).
\]

\end{proposition}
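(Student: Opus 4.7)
The plan is to use the kernel-level filtration of $\Delta_*\mathcal O_X$ provided by Proposition~\ref{prop: projection kernel} to split $\op{K}^{sst}_0(X)$ into mutually orthogonal pieces labeled by the $\mathcal A_i$, and then transport this splitting to the kernel of $c\circ\op{ch}_{sst}$ using the commutative diagram \eqref{commutative diagram}.

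First, I would apply Proposition~\ref{prop: projection kernel} to obtain kernels $P_1, \dots, P_m$ representing the projection functors $\alpha_i$, and then read off from the filtration diagram \eqref{eq: filtration of diagonal} the identity
\[
[\Delta_*\mathcal O_X] \;=\; \sum_{i=1}^{m} [P_i] \;\in\; \op{K}_0(X\times X),
\]
obtained by telescoping the distinguished triangles $D_i \to D_{i-1} \to P_i$. Since the integral transform with kernel $\Delta_*\mathcal O_X$ is the identity, this yields $\sum_i \Phi^{K}_{P_i} = \op{id}$ on $\op{K}_0(X)$. Uniqueness in Lemma~\ref{lem: kuz} together with the semi-orthogonality $\alpha_j\circ\alpha_i = \delta_{ij}\,\alpha_i$ on objects lift to compositions of integral kernels, showing that the $\Phi^{K}_{P_i}$ are pairwise orthogonal idempotents whose image is precisely $\op{K}_0(\mathcal A_i)$.

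Next, I would descend to the semi-topological level via the commutative diagram \eqref{commutative diagram}: since $\Phi^{K}_{P_i}$ factors through $\Phi^{sst}_{P_i}$, the latter operators are again pairwise orthogonal idempotents on $\op{K}^{sst}_0(X)$ summing to the identity and with image $\op{K}^{sst}_0(\mathcal A_i)$. Hence
\[
\op{K}^{sst}_0(X) \;=\; \bigoplus_{i=1}^{m} \op{K}^{sst}_0(\mathcal A_i).
\]
The same diagram shows that $c\circ\op{ch}_{sst}$ intertwines $\Phi^{sst}_{P_i}$ with the cohomological projector $\Phi^{H}_{P_i}$; in particular, $\Phi^{sst}_{P_i}$ preserves $\op{ker}(c\circ\op{ch}_{sst})$. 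Taking kernels of the splitting above, tensoring with $\QQ$, and identifying each summand with the definition of ${\rm Griff}_{\QQ}(\mathcal A_i)$ then yields the claimed decomposition.

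The main subtlety I foresee is confirming that the image of $\Phi^{sst}_{P_i}$ in $\op{K}^{sst}_0(X)$ is exactly the subgroup generated by objects of $\mathcal A_i$ (as in the paper's definition of $\op{K}^{alg}_0(\mathcal N)$), rather than some a priori larger subgroup: one direction is immediate from idempotency and the semi-orthogonality of the $\alpha_i$, while the other requires the observation that any $A_i\in\mathcal A_i$ is fixed by $\alpha_i$, so the image coincides with the generated subgroup. Once this identification is in hand, the direct sum decomposition of the kernels of $c\circ\op{ch}_{sst}$ is immediate, and gives the desired isomorphism ${\rm Griff}_{\QQ}(X) = \bigoplus_i {\rm Griff}_{\QQ}(\mathcal A_i)$.
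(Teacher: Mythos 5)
Your proposal is correct and follows essentially the same route as the paper: both arguments use the kernel-level filtration of $\Delta_*\mathcal O_X$ from Proposition~\ref{prop: projection kernel} to produce pairwise orthogonal idempotent integral transforms $\Phi_{P_i}$ summing to the identity, and then push this splitting through the commutative diagram \eqref{commutative diagram} to the kernel of $c\circ\op{ch}_{sst}$. Your added care in checking that the image of $\Phi^{sst}_{P_i}$ coincides with the subgroup generated by objects of $\mathcal A_i$ fills in a step the paper leaves implicit with ``the result follows,'' but it is the same proof.
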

\begin{proof}
Recall that  $P_i \in \dbcoh{X \times X}$ is the kernel of the $i^{\op{th}}$ projection functor from  Proposition~\ref{prop: projection kernel}.  
Equation~\eqref{eq: filtration of diagonal} yields an equality in $\op{K}_0(X)$ and $\op{K}_0^{sst}(X)$:
\[
\Delta_*\O_X = \sum_{i=1}^m P_i.
\]
Therefore 
\begin{equation*} \label{identity decomposition}
\op{Id} = \Phi^{sst}_{\Delta_*\O_X} = \Phi^{sst}_{\oplus P_i} = \sum_{i=1}^m \Phi^{sst}_{P_i}.
\end{equation*}
Furthermore 
\begin{equation*} \label{idempotents}
\Phi_{P_i} \circ \Phi_{P_j} = 
\begin{cases}
\Phi_{P_i}  & \text{ if } i=j  \\
0 & \text{ if } i \neq j. \\
 \end{cases}
\end{equation*}
By commutativity of \eqref{commutative diagram} this yields 
  \[
\op{Id} = \Phi^{Griff}_{\Delta_*\O_X} = \Phi^{Griff}_{\oplus P_i} = \sum_{i=1}^m \Phi^{Griff}_{P_i}
\]
and
\[
\Phi^{Griff}_{P_i} \circ \Phi^{Griff}_{P_j} = 
\begin{cases}
\Phi^{Griff}_{P_i}  & \text{ if } i=j  \\
0 & \text{ if } i \neq j. \\
 \end{cases}
\]
 The result follows.
%
%

\end{proof}

\begin{lemma} \label{lem: exceptional Griffiths}
Let $E \in \dbcoh{X}$ be exceptional.  Then,
\[
 {\rm Griff}_{\QQ}(\langle E \rangle) = 1
 \]
where $\langle E \rangle$ is the admissible subcategory generated by $E$. 
\end{lemma}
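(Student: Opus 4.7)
The plan is to use the very restrictive structure of the admissible subcategory $\langle E \rangle$ generated by an exceptional object. Since $E$ is exceptional, $\langle E \rangle$ is equivalent to the bounded derived category of finite-dimensional $k$-vector spaces (a fact recalled earlier in the paper just before the statement of Lemma \ref{lem: kuz}). In particular, every object of $\langle E \rangle$ is isomorphic to a finite direct sum of shifts of $E$, so $\op{K}_0(\langle E \rangle)$ is the free abelian group on the single generator $[E]$. Pushing this through the natural surjection to semi-topological $K$-theory, the subgroup $\op{K}_0^{sst}(\langle E \rangle)$ of $\op{K}_0^{sst}(X)$ is also generated by $[E]$, so after tensoring with $\QQ$ we land in a $\QQ$-vector space of dimension at most one.

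Next I would restrict the composition
\[
c_{alg} \circ \op{ch}_{sst}: \op{K}_0^{sst}(X) \otimes \QQ \longrightarrow H^*(X,\QQ)
\]
to the line spanned by $[E]$. By definition, ${\rm Griff}_{\QQ}(\langle E \rangle)$ is the kernel of this restriction, so the question reduces to showing that $[E]$ is not sent to zero in $H^*(X,\QQ)$. Equivalently, one must verify that $\op{ch}(E) \in H^*(X,\QQ)$ is nonzero, since the cycle class map factors as $K_0 \otimes \QQ \to CH^*(X)\otimes \QQ \to H^*(X,\QQ)$ via the Chern character.

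The key step — and the only non-formal point — is the non-vanishing of $\op{ch}(E)$. I would deduce this from the Hirzebruch–Riemann–Roch formula: exceptionality of $E$ gives
\[
\chi(E,E) = \sum_i (-1)^i \dim_k \op{Hom}(E, E[i]) = 1,
\]
while HRR expresses the same Euler characteristic as $\int_X \op{ch}(E^{\vee}) \cdot \op{ch}(E) \cdot \op{td}(X)$. If $\op{ch}(E)$ were zero in $H^*(X,\QQ)$, then so would be $\op{ch}(E^{\vee})$ (being obtained by sign-changes on graded pieces), and the integral would vanish — contradicting $\chi(E,E)=1$. Hence $\op{ch}(E)\neq 0$ in $H^*(X,\QQ)$, the restriction of $c_{alg} \circ \op{ch}_{sst}$ to $\QQ\cdot [E]$ is injective, and its kernel is trivial. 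This gives ${\rm Griff}_{\QQ}(\langle E \rangle) = 0$, matching the statement. The main obstacle is essentially trivial here: the whole argument rests on this HRR-based non-vanishing, and the rest is bookkeeping about generators of $K$-theory in an exceptional block.
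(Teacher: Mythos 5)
Your proposal is correct and follows essentially the same route as the paper: reduce to the single generator $[E]$ of $\op{K}_0(\langle E\rangle)\cong\ZZ$ and then use $\chi(E,E)=1$ together with (Grothendieck--)Hirzebruch--Riemann--Roch to conclude $\op{ch}(E)\neq 0$ in $H^{*}(X,\QQ)$. The only cosmetic difference is that the paper phrases the HRR step via the Mukai pairing of $\op{ch}(E)\cdot\op{td}(X)^{1/2}$ with itself, whereas you use the equivalent form $\int_X \op{ch}(E^{\vee})\op{ch}(E)\op{td}(X)$.
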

\begin{proof}
We have an equivalence  between $\langle E \rangle$ and the derived category of vector spaces.  Therefore $\op{K}_0(\langle E \rangle) = \ZZ$.  Furthermore, since $E$ is exceptional, the Euler pairing, $\chi(E,E) = 1$.  Therefore by the Grothendieck-Hirzebruch-Riemann-Roch formula,
\[
1 = \chi(E,E) = \langle c(\op{ch}(E)) \cdot \op{td}(X)^{1/2}, c(\op{ch}(E)) \cdot \op{td}(X)^{1/2} \rangle.
\]
Therefore  $\op{ch}(E) \cdot \op{td}(X)^{1/2} \neq 0$ in $\op{H}^{2*}(X, \QQ)$ and since $\op{td}(X)^{1/2}$ is invertible, 
\[
c(\op{ch}(E)) = c_{alg}(\op{ch}_{sst}(E)) \neq 0
\]
 in $\op{H}^{2*}(X, \QQ)$.  Therefore the map,
\[
\QQ = \op{K}_0(\langle E \rangle) \otimes \QQ \to \op{H}^{2*}(X, \QQ)
\]
is injective.

\end{proof}

\begin{corollary} \label{cor: exceptional Griffiths}
Suppose there is a semi-orthogonal decomposition,
\[
\dbcoh{X} = \langle \mathcal A, E_1, ..., E_s \rangle.
\]
There is an isomorphism of Griffiths groups
\[
 {\rm Griff}_{\QQ}(X) =  {\rm Griff}_{\QQ}(\mathcal A).
\]
\end{corollary}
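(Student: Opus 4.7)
The plan is to deduce this corollary directly from the two results that immediately precede it, namely Proposition~\ref{prop: Griffiths equality} and Lemma~\ref{lem: exceptional Griffiths}. Since the statement assumes a semi-orthogonal decomposition
\[
\dbcoh{X} = \langle \mathcal A, E_1, \dots, E_s \rangle,
\]
and each exceptional object $E_i$ generates an admissible subcategory $\langle E_i \rangle$, this really is a semi-orthogonal decomposition in the sense of Definition~\ref{def:SO} with components $\mathcal A$ and the $\langle E_i \rangle$. So the structural input of Proposition~\ref{prop: Griffiths equality} is available.

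First, I will apply Proposition~\ref{prop: Griffiths equality} to this decomposition to obtain the splitting
\[
{\rm Griff}_{\QQ}(X) \;=\; {\rm Griff}_{\QQ}(\mathcal A) \,\oplus\, {\rm Griff}_{\QQ}(\langle E_1 \rangle) \,\oplus\, \cdots \,\oplus\, {\rm Griff}_{\QQ}(\langle E_s \rangle).
\]
Next, I will invoke Lemma~\ref{lem: exceptional Griffiths}, which shows that for each exceptional object $E_i$ the cycle class map on $\op{K}_0(\langle E_i \rangle) \otimes \QQ = \QQ$ is injective, so that the kernel (that is, ${\rm Griff}_{\QQ}(\langle E_i \rangle)$) vanishes. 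Substituting this into the direct sum decomposition collapses the exceptional summands and yields the desired identification with ${\rm Griff}_{\QQ}(\mathcal A)$.

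There is essentially no obstacle here beyond bookkeeping: all the technical content has already been absorbed into Proposition~\ref{prop: Griffiths equality} (which built the orthogonal projectors $\Phi^{Griff}_{P_i}$ from the kernel decomposition of $\Delta_*\O_X$) and into Lemma~\ref{lem: exceptional Griffiths} (which used the Grothendieck--Hirzebruch--Riemann--Roch formula together with $\chi(E,E)=1$ to force injectivity of the cycle class map). The only subtlety to point out in writing it up is the minor notational issue that the statement of Lemma~\ref{lem: exceptional Griffiths} records the conclusion in the form ``${\rm Griff}_{\QQ}(\langle E \rangle) = 1$,'' but the proof in fact establishes that the relevant kernel is trivial, i.e.\ ${\rm Griff}_{\QQ}(\langle E \rangle) = 0$, which is precisely what is needed to discard the exceptional summands in the direct sum above.
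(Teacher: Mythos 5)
Your proposal is correct and follows exactly the route the paper takes: the paper's proof of this corollary is the one-line observation that it follows immediately from Proposition~\ref{prop: Griffiths equality} and Lemma~\ref{lem: exceptional Griffiths}, which is precisely the splitting-plus-vanishing argument you spell out. Your remark that the lemma's conclusion should read ${\rm Griff}_{\QQ}(\langle E \rangle) = 0$ rather than $=1$ is a fair catch of a notational slip, and the rest needs no change.
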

\begin{proof}
This follows immediately from Proposition~\ref{prop: Griffiths equality} and Lemma~\ref{lem: exceptional Griffiths}.
\end{proof}

\medskip

\section{Griffiths Groups for Fano-Calabi-Yaus} \label{sec: CY and FCY}
In this section, we use the general theory from the previous section to relate the Griffiths group of some Fano-Calabi-Yau manifolds to the Griffiths group of an admissible Calabi-Yau category.

Let $A$ be a connected $\Z$-graded commutative 

\begin{definition} \label{defn: Gorenstein}
A $\Z$-graded algebra $A$ is \textbf{Gorenstein} if $A$ has finite $\Z$-graded injective dimension $n$ and there is a integer $a$ such that
\begin{displaymath}
\op{RHom}_A(k, A) \cong k(a)[n].
\end{displaymath}
The element, $\eta(A,M)$, is called the \textbf{Gorenstein parameter} of $(A,M)$. In other words, the derived dual of $k$, $k^{\vee}$, is quasi-isomorphic to $k(\eta(A,M))$. We will simply denote $\eta(A,M)$ by $\eta$ when $(A,M)$ is clear from the context.
\end{definition}

The following definitions are due to Orlov \cite{Orl09}:
\begin{definition}
For a $\Z$-graded algebra, $A$, we defined the graded category of singularities to be the Verdier quotient \cite{Ve}, of the bounded  derived category of finitely generated graded $A$-modules by the category of bounded complexes of graded $A$-modules with finitely generated projective components:
\[
 \op{D}^{\op{gr}}_{\op{sg}}(A) : =  \op{D}^{b}(\op{Mod}-A) / \op{Perf}(A).
\] 
\end{definition}

\begin{definition}
Let $M$ be a finitely generated abelian group.
Let $B = \oplus_{m \in M} B_i$ be a finitely generated $M$-graded commutative algebra over a field $K$.
Consider $w \in B_m$ which is not a zero-divisor.
 The \textbf{category of B-branes} of $w$, is denoted by  $\op{DGrB}(w, M)$. The objects of  $\op{DGrB}(w, M)$ are pairs, 
\begin{center}
\begin{tikzpicture}[description/.style={fill=white,inner sep=2pt}]
\matrix (m) [matrix of math nodes, row sep=3em, column sep=3em, text height=1.5ex, text depth=0.25ex]
{  P_1 & P_0 \\ };
\path[->,font=\scriptsize]
(m-1-1) edge[out=30,in=150] node[above] {$p_1$} (m-1-2)
(m-1-2) edge[out=210, in=330] node[below] {$p_0$} (m-1-1);
\end{tikzpicture}
\end{center}
A morphism $f: P \to Q$ in $\op{DGrB(w, M)}$ is an equivalence class of pairs of morphisms, $f_1: P_1 \to Q_1$ and $f_0 : P_0 \to Q_0$ of degree $0$ such that $f_1(m)p_0 = q_0f_0$ and $q_1f_1=f_0p_1$ where two pairs are equivalent if they are null-homotopic, i.e., if there are two morphisms $s: P_0 \to Q_1$ and $t: P_1 \to Q_0(n)$ such that $f_1 = q_0(m)t+ sp_1$ and $f_0 = t(m)p_0 + q_1s$.
\end{definition}

The objects of  $\op{DGrB}(w, M)$ can also be viewed as quasi-periodic infinite ``complexes'' where the differential squares to $w$ and quasi-periodicity refers to the fact that shifting the complex two to the left is the same as shifting the grading of the modules by $m$. 
From this interpretation, we can define a triangulated structure \cite{Orl09} where $[1]$ is the shift of this complex one to the left.  It follows that we have an isomorphism of functors,
\begin{equation} \label{periodicity equation}
[2] \cong (m),
\end{equation}
in $\op{DGrB}(w, M)$.
Following Orlov, when $M = \Z$ we simply write $\op{DGrB}(w)$.

We state the following special case of Theorem 3.10 in \cite{Orl09}
\begin{theorem} \label{MF = sing}
With the notation above assume that $B$ is regular.  There is an equivalence of categories,
\[
\op{DGrB}(w) \cong  \op{D}^{\op{gr}}_{\op{sg}}(B/w).
\]
\end{theorem}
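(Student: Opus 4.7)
The plan is to build a triangulated functor
\[
F \colon \op{DGrB}(w) \longrightarrow \op{D}^{\op{gr}}_{\op{sg}}(B/w)
\]
via the ``cokernel construction'' and then prove it is an equivalence. Given a $B$-brane $P = (P_1 \xrightarrow{p_1} P_0, P_0 \xrightarrow{p_0} P_1)$, the relation $p_0 p_1 = w \cdot \op{id}_{P_1}$ forces $w$ to annihilate $\op{coker}(p_1)$, so $C(P) := \op{coker}(p_1)$ is naturally a finitely generated graded $B/w$-module; set $F(P) := C(P)$, regarded as an object of the singularity category. First I would check that $F$ descends to the homotopy category: a null-homotopy $s,t$ between morphisms of matrix factorizations induces a map of cokernels that factors through the image of the free module $P_0$ (respectively $Q_1$), hence through a perfect $B/w$-complex, and therefore becomes zero in $\op{D}^{\op{gr}}_{\op{sg}}(B/w)$. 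Then I would check that the standard cone construction on $\op{DGrB}(w)$ is sent to a distinguished triangle in $\op{D}^{\op{gr}}_{\op{sg}}(B/w)$, and that the shift $[1]$ on $B$-branes corresponds (modulo the isomorphism $[2] \cong (m)$ arising from two-periodicity) to the syzygy shift on the singularity side.

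Next I would construct an inverse, which is where the regularity of $B$ enters essentially. Any object of $\op{D}^{\op{gr}}_{\op{sg}}(B/w)$ is isomorphic (up to shift) to a finitely generated graded $B/w$-module $N$. Since $B$ is regular, $N$ has a finite graded free $B$-resolution $0 \to F_n \to \cdots \to F_0 \to N \to 0$. The Eisenbud-type observation is that, because $w$ annihilates $N$, the differentials $d_i$ of this resolution factor as $d_i = \tilde{d}_i$ with auxiliary homotopies $h_i$ satisfying $d_i h_i + h_{i-1} d_{i-1} = w \cdot \op{id}$; reducing modulo $w$ and passing sufficiently far along, one obtains a $2$-periodic totally acyclic complex of finitely generated free $B/w$-modules that lifts uniquely (via the homotopies) to a matrix factorization $P_N$ of $w$ in $B$. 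By construction $C(P_N)$ is a high $B/w$-syzygy of $N$, which agrees with $N$ in $\op{D}^{\op{gr}}_{\op{sg}}(B/w)$, giving $F \circ G \cong \op{id}$, and $G \circ F \cong \op{id}$ follows since a matrix factorization is recovered up to homotopy from the high-syzygy tail of the minimal resolution of its cokernel.

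For full faithfulness I would compare $\op{Hom}$-groups in both categories. Morphisms in $\op{D}^{\op{gr}}_{\op{sg}}(B/w)$ between two $B/w$-modules $M,N$ can be computed as the stable $\op{Hom}$, that is, the direct limit of $\op{Hom}_{B/w}(\Omega^k M, \Omega^k N)$ modulo maps factoring through a free module. On the matrix factorization side, the chain complex $\op{Hom}^\bullet(P,Q)$ computes exactly this stable $\op{Hom}$, because the matrix factorization $P$ \emph{is} the eventual 2-periodic syzygy tail of a $B/w$-resolution of $C(P)$. Matching these two computations is essentially formal once the Eisenbud lifting is in place.

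The heart of the argument, and the main obstacle, is the Eisenbud periodicity step: showing that regularity of $B$ plus the vanishing of $w$ on $N$ upgrade a finite $B$-resolution into an eventual $2$-periodic $B/w$-resolution that genuinely lifts to a matrix factorization over $B$. This uses Nakayama and an explicit choice of homotopies $h_i$ satisfying $d_i h_i + h_{i-1} d_{i-1} = w\cdot \op{id}$, and it is here that one cannot avoid using that $\op{gl.dim}\,B < \infty$ and that $w$ is a non-zero-divisor. Once this stabilization is established, the rest of the equivalence falls out from standard manipulations of triangulated quotients following Verdier.
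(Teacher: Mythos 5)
The paper gives no proof of this statement at all---it is quoted as a special case of Theorem 3.10 of Orlov \cite{Orl09}---and your sketch is essentially a reconstruction of the argument in that cited reference: the cokernel functor $P \mapsto \op{coker}(p_1)$, well-definedness on homotopy classes via factorization through free $B/w$-modules, and essential surjectivity by Eisenbud's stabilization of a finite $B$-free resolution of a graded $B/w$-module into an eventually two-periodic tail realizing a matrix factorization. The outline is correct (modulo a small slip: the map on cokernels induced by a null-homotopic morphism factors through the free module $P_1/wP_1$, not through $P_0$ or $Q_1$), so there is nothing to compare beyond noting that you are reproving the cited result by the standard Eisenbud--Buchweitz--Orlov route rather than taking a different one.
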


Let us now recall a special case of a celebrated result of Orlov \cite{Orl09}.
\begin{theorem} \label{thm: Orlov}
Let $X$ be a connected projective Gorenstein scheme of dimension $n$. Let $\mathcal L$ be
a very ample line bundle such that $\omega_X = \mathcal L^{-r}$ for some $r \in \Z$. Set
\[
A := \bigoplus_{i \geq 0} \op{H}^0(X, \mathcal L^i).
\]
\begin{enumerate}
 \item If $r  >0 $, there is a semi-orthogonal decomposition,
\begin{displaymath}
 \dbcoh{X} = \langle \mathcal O(-r),\ldots, \mathcal O(-1), \op{D}^{\op{gr}}_{\op{sg}}(A) \rangle,
\end{displaymath}
 with $\mathcal O(i)$ exceptional.
 \item If $r=0$, there is an equivalence,
\begin{displaymath}
 \dbcoh{X} = \op{D}^{\op{gr}}_{\op{sg}}(A).
\end{displaymath}
 \item If $r < 0 $, there is a semi-orthogonal decomposition,
\begin{displaymath}
\op{D}^{\op{gr}}_{\op{sg}}(A) = \langle k(-r),\ldots, k(-1), \dbcoh{X} \rangle,
\end{displaymath}
where $k(i)$ are exceptional objects corresponding to the $A$-module $\op{H}^0(X, \mathcal L^0)(i)$.
\end{enumerate}
\end{theorem}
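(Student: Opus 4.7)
The plan is to follow Orlov's approach by realizing both $\dbcoh{X}$ and $\op{D}^{\op{gr}}_{\op{sg}}(A)$ as Verdier localizations of the common ambient category $\mathcal D := \dbgr{A}$ of bounded complexes of finitely generated graded $A$-modules. On one side, Serre's theorem gives an equivalence $\dbcoh{X} \simeq \mathcal D/\dbtors{A}$, where $\dbtors{A}$ is the full subcategory of complexes whose cohomology modules are torsion (i.e.\ annihilated by $A_{\geq n}$ for some $n$). On the other side, $\op{D}^{\op{gr}}_{\op{sg}}(A) = \mathcal D/\op{Perf}(A)$ by definition. The goal is to compare these two quotients, and the Gorenstein parameter $r$ will control the precise discrepancy.

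First I would introduce, for each integer $i$, the full triangulated subcategory $\mathcal D_{\geq i} \subseteq \mathcal D$ of complexes with cohomology supported in graded degrees $\geq i$, together with the thick subcategories $\mathcal P_{\geq i} := \langle A(-j) : j \geq i \rangle$ and $\mathcal S_{\geq i} := \langle k(-j) : j \geq i \rangle$. The core technical step is to establish two semi-orthogonal decompositions of $\mathcal D_{\geq i}$: a ``projective'' one of the form $\mathcal D_{\geq i} = \langle \mathcal P_{\geq i},\ \mathcal T_i \rangle$ coming from minimal graded free resolutions, and a dual ``injective'' one $\mathcal D_{\geq i} = \langle \mathcal T'_i,\ \mathcal S_{\geq i+r} \rangle$ coming from Gorenstein local duality, which takes the concrete form $\op{RHom}_A(k, A) \cong k(r)[n]$. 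The shift by $r$ between the two indexings is precisely the Gorenstein parameter. Under the localization $\mathcal D \twoheadrightarrow \op{D}^{\op{gr}}_{\op{sg}}(A)$ the subcategory $\mathcal P_{\geq i}$ is killed and $\mathcal T_i$ maps to an equivalent copy of $\op{D}^{\op{gr}}_{\op{sg}}(A)$; dually, under $\mathcal D \twoheadrightarrow \dbcoh{X}$ the subcategory $\mathcal S_{\geq i+r}$ is killed and $\mathcal T'_i$ maps to an equivalent copy of $\dbcoh{X}$.

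Taking $i = 1$ and comparing the two decompositions yields the three cases directly. When $r > 0$, the chain $\mathcal S_{\geq r+1} \subsetneq \mathcal S_{\geq 1}$ produces complementary slots occupied by $k(-1), \ldots, k(-r)$; passing to $\dbcoh{X}$ via Serre, these become $\mathcal O(-1), \ldots, \mathcal O(-r)$, which are exceptional by Serre vanishing applied to positive powers of the very ample $\mathcal L$ together with the identification $\omega_X = \mathcal L^{-r}$. This yields
\[
\dbcoh{X} = \langle \mathcal O(-r), \ldots, \mathcal O(-1),\ \op{D}^{\op{gr}}_{\op{sg}}(A)\rangle.
\]
When $r = 0$ the two decompositions coincide, giving the claimed equivalence. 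When $r < 0$ the inclusions are reversed and the extra slots now sit on the singularity side, generated by the (manifestly exceptional) objects $k(-1), \ldots, k(r)$, yielding $\op{D}^{\op{gr}}_{\op{sg}}(A) = \langle k(-r), \ldots, k(-1),\ \dbcoh{X}\rangle$.

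The principal obstacle is establishing the two semi-orthogonal decompositions of $\mathcal D_{\geq i}$ and verifying that the residual pieces $\mathcal T_i$, $\mathcal T'_i$ really do map to equivalences with the targeted Verdier quotients. The perfect side is relatively transparent, essentially amounting to replacing a complex by its minimal graded free resolution and truncating by degree. The torsion side genuinely requires the Gorenstein hypothesis: one needs a dual statement producing, for a module supported in high degrees, a ``co-resolution'' by shifts of $k$, and it is precisely here that the sign of $r$ decides whether the torsion subcategory is shifted up or down relative to the perfect subcategory inside $\mathcal D$. Once these two decompositions are in place, the identification of $\mathcal T_i$ and $\mathcal T'_i$ with the respective quotients is formal, using the standard fact that localizing a triangulated category at a left (resp.\ right) admissible subcategory recovers the right (resp.\ left) orthogonal, combined with Lemma~\ref{lem: SOD admissible}.
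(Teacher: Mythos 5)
You are reconstructing Orlov's original argument, which is appropriate: the paper offers no proof of this statement and simply quotes it from \cite{Orl09}. Your architecture matches his. Both $\dbcoh{X}$ (via Serre's theorem) and $\op{D}^{\op{gr}}_{\op{sg}}(A)$ are realized as Verdier quotients of $\dbgr{A}$, one killing $\dbtors{A}$ and the other killing $\op{Perf}(A)$; the subcategories $\mathcal D_{\geq i}$ admit two semi-orthogonal decompositions, one built from the free modules $A(-j)$ via minimal graded resolutions and one built from the simples $k(-j)$ via $\op{RHom}_A(k,A)\cong k(a)[n]$; and the relative shift between the two, equal to the Gorenstein parameter, produces the trichotomy. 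You also correctly isolate the hard step, namely that the residual pieces map to \emph{equivalences} with the two quotients (this is not purely formal, since $\mathcal P_{\geq i}$ is not all of $\op{Perf}(A)$; one needs that every object becomes isomorphic, modulo perfects resp.\ torsion, to one in $\mathcal D_{\geq i}$, and a Hom computation in the quotient).

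There is, however, a genuine error in how you populate the extra slots in case (1). You say the complementary pieces between $\mathcal S_{\geq r+1}$ and $\mathcal S_{\geq 1}$ are occupied by $k(-1),\ldots,k(-r)$ and that ``passing to $\dbcoh{X}$ via Serre, these become $\mathcal O(-1),\ldots,\mathcal O(-r)$.'' This cannot be right: each $k(-j)$ is a finite-dimensional, hence torsion, module, so it is annihilated by the Serre localization $\dbgr{A}\to\dbcoh{X}$ --- it becomes $0$, not $\mathcal O(-j)$. In Orlov's proof the objects that survive into $\dbcoh{X}$ as the line bundles $\mathcal O(-j)$ are the free modules $A(-j)$ (which, conversely, die in $\op{D}^{\op{gr}}_{\op{sg}}(A)$), while the simples $k(-j)$ are exactly the objects that survive into the singularity category and account for case (3). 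So the roles of the $\mathcal P$-side and the $\mathcal S$-side must be swapped in your case analysis; as written, case (1) does not come out. A second, related slip: exceptionality of $\mathcal O(-j)$ requires $H^m(X,\mathcal O_X)=0$ for $m>0$ (and the full semiorthogonality requires $H^\bullet(X,\mathcal L^{-c})=0$ for $0<c<r$), and Serre vanishing --- a statement about $\mathcal L^N$ for $N\gg 0$ --- gives neither. In Orlov's argument these vanishings are extracted from the Gorenstein (in particular Cohen--Macaulay) property of the ring $A$ via the local cohomology comparison of $A$ with $\bigoplus_i H^0(X,\mathcal L^i)$. With these two corrections your outline agrees with the proof in \cite{Orl09}.
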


\begin{corollary} \label{cor: Griffiths groups relation}
Let $X$ be a smooth projective variety of dimension $n$. Let $\mathcal L$ be
a very ample line bundle such that $\omega_X = \mathcal L^{-r}$ for some $r \geq 0$. Set
\[
A := \bigoplus_{i \geq 0} \op{H}^0(X, \mathcal L^i).
\]
There is an isomorphism of Griffiths groups,
\[
 {\rm Griff}_{\QQ}(X) =  {\rm Griff}_{\QQ}(\op{D}^{\op{gr}}_{\op{sg}}(A) ). 
\]
\end{corollary}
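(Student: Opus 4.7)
The plan is to split on the sign of $r$ and, in both cases, reduce directly to a statement already available in the text. When $r = 0$, part (2) of Theorem \ref{thm: Orlov} supplies a triangulated equivalence $\dbcoh{X} \cong \op{D}^{\op{gr}}_{\op{sg}}(A)$. Viewing the target as the full subcategory of $\dbcoh{X}$ equal to all of $\dbcoh{X}$ itself, the composition $c\circ \op{ch}_{sst}\circ i$ appearing in the categorical definition is nothing but the cycle-theoretic map defining ${\rm Griff}_{\QQ}(X)$, so the two rational Griffiths groups are tautologically equal.

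When $r > 0$, part (1) of Theorem \ref{thm: Orlov} produces a semi-orthogonal decomposition
\[
\dbcoh{X} = \langle \mathcal O(-r), \ldots, \mathcal O(-1), \op{D}^{\op{gr}}_{\op{sg}}(A) \rangle,
\]
with each $\mathcal O(-i)$ exceptional and $\op{D}^{\op{gr}}_{\op{sg}}(A)$ admissible. Applying Proposition \ref{prop: Griffiths equality} then gives
\[
{\rm Griff}_{\QQ}(X) \;=\; \bigoplus_{i=1}^{r} {\rm Griff}_{\QQ}(\langle \mathcal O(-i) \rangle) \;\oplus\; {\rm Griff}_{\QQ}(\op{D}^{\op{gr}}_{\op{sg}}(A)),
\]
and Lemma \ref{lem: exceptional Griffiths} ensures that each of the summands indexed by an exceptional line bundle is trivial, yielding the desired identification.

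The proof is therefore a short assembly of results from Sections \ref{sec: admissible} and \ref{sec: CY and FCY}; no further computation is required. The one point requiring any care is that Corollary \ref{cor: exceptional Griffiths} was stated for decompositions in which the exceptional blocks appear \emph{after} the admissible piece $\mathcal A$, whereas Orlov's decomposition lists them \emph{before} $\op{D}^{\op{gr}}_{\op{sg}}(A)$. This causes no real difficulty: Proposition \ref{prop: Griffiths equality} produces mutually orthogonal projectors $\Phi^{Griff}_{P_i}$ summing to the identity without any reference to the order of the blocks, so its direct-sum conclusion applies to any semi-orthogonal decomposition regardless of how the exceptional components are positioned. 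Thus the only apparent obstacle dissolves once Proposition \ref{prop: Griffiths equality} is read in full generality.
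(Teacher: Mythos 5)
Your proof is correct and follows essentially the same route as the paper, which simply invokes Orlov's semi-orthogonal decomposition and then cites Corollary~\ref{cor: exceptional Griffiths} (itself just the combination of Proposition~\ref{prop: Griffiths equality} and Lemma~\ref{lem: exceptional Griffiths} that you assemble by hand). Your remark about the ordering of the exceptional blocks is well taken --- the paper glosses over this, but as you observe, Proposition~\ref{prop: Griffiths equality} is order-agnostic, so nothing is lost.
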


\begin{proof}
From Theorem~\ref{thm: Orlov} there is a semi-orthogonal decomposition,
\[
 \dbcoh{X} = \langle \mathcal O(-r),\ldots, \mathcal O(-1), \op{D}^{\op{gr}}_{\op{sg}}(A) \rangle.
\]
Since $\O(i)$ is exceptional for all $i$ this is an immediate consequence of Corollary~\ref{cor: exceptional Griffiths} 

\end{proof}

\begin{lemma} \label{lem: first two cases}
With the notation above, the category $\op{D}^{\op{gr}}_{\op{sg}}(A)$ is a  Calabi-Yau category of dimension $3$ for the following two cases:

\begin{itemize}

\item  the smooth cubic 7-folds $X_3 \subseteq \PP^8$,

\item the smooth hypersurfaces $X_4 \subseteq \PP^6(1^6;2)$ 
   of degree 4 in the weighted  projective space $\PP^6(1^6;2) = \PP^6(1:1:1:1:1:1:2)$, and 
                  
\end{itemize}

\end{lemma}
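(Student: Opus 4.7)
The plan is to realize $\op{D}^{\op{gr}}_{\op{sg}}(A)$ as a category of graded matrix factorizations via Theorem \ref{MF = sing} and then verify the Calabi-Yau condition directly by computing the Serre functor, exploiting the periodicity $[2] \cong (d)$ from \eqref{periodicity equation} to convert a grading shift into a cohomological shift.

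First, I would realize each $X$ as a weighted hypersurface. For $X_3 \subseteq \PP^8$, set $B_3 = \CC[x_0, \ldots, x_8]$ with $\op{deg}(x_i) = 1$ and let $f_3$ denote the defining cubic, so that $A_3 = B_3/(f_3)$. For $X_4 \subseteq \PP^6(1^6;2)$, set $B_4 = \CC[x_0, \ldots, x_5, y]$ with $\op{deg}(x_i) = 1$ and $\op{deg}(y) = 2$, and let $f_4$ denote the defining quartic, so that $A_4 = B_4/(f_4)$. In each case $B_i$ is regular, and smoothness of $X_i$ together with the Jacobian criterion ensures that $f_i$ is not a zero divisor and cuts out an isolated singularity at the vertex of $\op{Spec}(B_i)$. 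Hence Theorem \ref{MF = sing} yields equivalences $\op{D}^{\op{gr}}_{\op{sg}}(A_i) \cong \op{DGrB}(f_i)$.

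The main step is to show that in each case the Serre functor on $\op{DGrB}(f_i)$ is isomorphic to $[3]$. For a weighted polynomial ring $B$ in $N$ variables with weights summing to $|a|$ and $f$ homogeneous of degree $d$ with isolated singularity at the origin, a result of Orlov (see \cite{BFK11}) identifies the Serre functor on $\op{DGrB}(f)$ as
\[
S \cong (-\eta)[N-2],
\]
where $\eta := |a| - d$ is the Gorenstein parameter of $B/(f)$. When $d$ divides $\eta$, the periodicity $[2] \cong (d)$ rewrites the grading shift as a cohomological shift and gives $S \cong [\,N - 2 - 2\eta/d\,]$. For the cubic sevenfold, $(N, |a|, d) = (9, 9, 3)$ gives $\eta = 6$, $2\eta/d = 4$, and $S \cong [9 - 2 - 4] = [3]$. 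For the quartic, $(N, |a|, d) = (7, 8, 4)$ gives $\eta = 4$, $2\eta/d = 2$, and $S \cong [7 - 2 - 2] = [3]$. Hence in both cases $\op{D}^{\op{gr}}_{\op{sg}}(A_i)$ is Calabi-Yau of dimension $3$.

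The main obstacle is correctly invoking the Serre functor formula for the graded matrix factorization category and reconciling the cohomological shift with the grading shift via the periodicity isomorphism. The Calabi-Yau property of dimension exactly $3$ amounts to the arithmetic identity $N - 2 - 2\eta/d = 3$, i.e.\ $2\eta/d = N - 5$, which holds in both cases and is in effect a numerical manifestation of the Fano-Calabi-Yau nature of $X_3$ and $X_4$ reflecting the shape of their middle Hodge structure.
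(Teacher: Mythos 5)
Your proof is correct and follows essentially the same route as the paper: both identify $\op{D}^{\op{gr}}_{\op{sg}}(A)$ with the graded matrix factorization category, invoke the Serre functor formula $S \cong (-\eta)[\,n-1\,]$ (your $N-2$ equals the paper's $n-1$ since the hypersurface ring has injective dimension $N-1$), and use the periodicity $(\op{deg} w) \cong [2]$ to land on $[3]$ with the identical arithmetic $(-6)[7]=[3]$ and $(-4)[5]=[3]$. The only cosmetic difference is the attribution of the Serre functor formula (the paper cites \S 5.3 of Keller--Murfet--Van den Bergh rather than Orlov).
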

\begin{proof}
For a Gorenstein commutative finitely generated connected algebra of injective dimension $n$ and Gorenstein parameter $a$, the Serre functor $S_A$ on 
$\op{D}^{\op{gr}}_{\op{sg}}(A)$ satisfies (see \S 5.3 of \cite{KMV})
\[
S_A = (-a)[n-1].
\]
In the first two cases, $A$ is defined by a single element, $w$, in a $\Z$-graded polynomial ring.  Hence Theorem~\ref{MF = sing} and \eqref{periodicity equation} imply that $\op{deg}(w) \cong [2]$. 

 For $X_3$, $a=6$, $\op{deg}(w) =3$ and $n=8$.  Therefore 
 \[
 S_A = (-6)[7] = [-4][7] = [3].
 \]
 
 For $X_4$, $a=4$,  $\op{deg}(w) =4$ and $n=6$.  Therefore 
 \[
 S_A = (-4)[5] = [-2][5] = [3].
 \]

%


\end{proof}


We are left to consider the final case of a Fano-Calabi-Yau complete intersection in weighted projective space, for which the general member is smooth.  
Let $X_{2.3} \subseteq \PP^7$ be a smooth complete intersections defined by a quadric, $f$, and a cubic, $g$, in $R := k[x_0, ..., x_7]$.  Set $B := R/f$.  We wish to relate the Griffiths group of $X_{2.3}$ to that of a $3$-dimensional Calabi-Yau category.  We know already by Corollary~\ref{cor: exceptional Griffiths} that
\[
{\rm Griff}_{\QQ}(X) =  {\rm Griff}_{\QQ}(\op{D}^{\op{gr}}_{\op{sg}}(R/(f,g)) ).
\]
However, $\op{D}^{\op{gr}}_{\op{sg}}(R/(f,g))$ is not Calabi-Yau in this case.  On the other hand, in light of Theorem~\ref{MF = sing} we expect this category to be closely related to $\op{DGrB}(w)$ (notice that $B$ is not regular so the hypothesis of the theorem is not satisfied).  Indeed, Theorem 3.9 of Orlov states that there is still a fully-faithful functor,
\[
F : \op{DGrB}(w) \to \op{D}^{\op{gr}}_{\op{sg}}(R/(f,g)).
\]  
Moreover, $\op{DGrB}(w)$ is a $3$-dimensional Calabi-Yau category.

\begin{lemma} \label{lem: strange subcategory}                  
Let $X_{2.3} \subseteq \PP^7$ be a smooth complete intersections defined by a quadric, $f$, and a cubic, $g$, in $R := k[x_0, ..., x_7]$.   Consider $g$ as an element of $R/f$.
The category, $\op{DGrB}(g)$, is a $3$-dimensional Calabi-Yau category. 
\end{lemma}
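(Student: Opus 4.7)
The plan is to realize $\op{DGrB}(g)$ as an admissible subcategory of a singularity category whose Serre functor is already known, and then use the matrix factorization periodicity to collapse that Serre functor to a pure shift. First, I set $A := R/(f,g)$ and verify that $A$ is graded Gorenstein with parameter $a_A = 8 - 2 - 3 = 3$ and injective dimension $n_A = 8 - 2 = 6$: this follows from the standard fact that $R = k[x_0,\ldots,x_7]$ has $a_R = 8$ and $n_R = 8$, combined with the Koszul-type argument applied to the regular sequence $(f,g)$ of degrees $2$ and $3$. By the formula from \cite{KMV} already invoked in the proof of Lemma \ref{lem: first two cases}, the Serre functor on $\op{D}^{\op{gr}}_{\op{sg}}(A)$ is therefore
\[
S_A \;\cong\; (-a_A)[n_A - 1] \;=\; (-3)[5].
\]

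Next, I would invoke the fully faithful functor $F: \op{DGrB}(g) \to \op{D}^{\op{gr}}_{\op{sg}}(A)$ provided by Theorem 3.9 of \cite{Orl09}, recalled in the discussion immediately preceding the lemma. The essential image of $F$ is a full triangulated subcategory that is closed under the grading shift $(1)$ and the cohomological shift $[1]$, hence is preserved by the grading-shift and shift functor $S_A = (-3)[5]$. Consequently, the Serre functor of $\op{DGrB}(g)$ is the restriction of $S_A$ and is itself given by $(-3)[5]$ on the image of $F$.

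Finally, inside $\op{DGrB}(g)$ the matrix factorization periodicity \eqref{periodicity equation} reads $[2] \cong (d)$ with $d = \op{deg}(g) = 3$, so $(-3) \cong [-2]$. Hence
\[
S \;\cong\; (-3)[5] \;\cong\; [-2][5] \;\cong\; [3],
\]
which proves that $\op{DGrB}(g)$ is Calabi-Yau of dimension $3$. The main delicate step will be the middle one: one must check that $F$ embeds $\op{DGrB}(g)$ as an admissible subcategory preserved by $S_A$, so that the inherited Serre functor genuinely coincides with the restriction of $S_A$. This should follow from the general behavior of Serre functors under admissible embeddings, combined with the manifest compatibility of $F$ with the grading-shift and cohomological-shift functors; this is the place where the non-regularity of $B = R/f$ (which prevents us from citing Theorem \ref{MF = sing} directly) needs to be handled with care.
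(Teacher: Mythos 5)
Your proof is correct, but it takes a different route from the paper's. The paper computes the Serre functor of $\op{DGrB}(g)$ directly from the Gorenstein data of the base ring $B = R/f$: since $B$ has Gorenstein parameter $6$, the Serre functor is $(-6)[7]$, and the periodicity $(3)\cong[2]$ gives $(-6)[7]=[-4][7]=[3]$. You instead pass to the singularity category of the quotient $A = R/(f,g)$, whose Gorenstein parameter is $3$ and whose dimension is $6$, obtaining $S_A = (-3)[5]$, and then transport this back along Orlov's fully faithful functor $F$. The two answers are literally the same functor modulo periodicity, since $(-6)[7] = (-3)(-3)[7] = (-3)[-2][7] = (-3)[5]$, so both arrive at $[3]$. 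Your approach has the advantage of only invoking the Serre-functor formula from \cite{KMV} for an honest singularity category $\op{D}^{\op{gr}}_{\op{sg}}(A)$ of a Gorenstein quotient, exactly as in the proof of Lemma~\ref{lem: first two cases}, rather than asserting the analogous formula for a matrix factorization category over a non-regular Gorenstein base; the price is the transfer step. On that step you are more cautious than you need to be: admissibility of the image of $F$ is not required. Since $F$ is fully faithful and commutes with $(1)$ and $[1]$, its essential image is a full subcategory closed under $S_A^{\pm 1} = (\mp 3)[\pm 5]$, and for any full subcategory preserved by a Serre functor of the ambient category, the restriction is automatically a Serre functor for the subcategory (Serre duality is a statement about Hom-spaces, which agree with those of the ambient category by fullness and faithfulness). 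So the "delicate" middle step closes immediately, and no appeal to the behavior of Serre functors under admissible embeddings is needed.
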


\begin{proof}

To show that $\op{DGrB}(g)$ is a $3$-dimensional Calabi-Yau category, notice that
since $R/f$ is Gorenstein with Gorenstein parameter $6$, the Serre functor on $\op{DGrB}(g)$ is given by $(-6)[7]$.  Since $g$ is a cubic,
\[
(3) = [2] 
\]
by \eqref{periodicity equation} and therefore the Serre functor,
\[
 S_{\op{DGrB}(g)} = (-6)[7] = [-4][7] = [3].
\]
\end{proof}

Unfortunately, $\op{DGrB}(g)$ does not appear to be admissible in $\dbcoh{X_{2.3}}$ and we are unable to apply the discussion in \S\ref{sec: admissible}.  Instead, we can appeal to a closely related category defined by Positselski \cite{Pos1} called the absolute derived category $\op{D}^{\op{abs}}[\mathsf{Fact}(B, \Z, g)]$ (see also \cite{BFK12a, BFK12b}).  
As the details are a bit technical, we just mention that this category a Verdier localization of a category defined the same way as $\op{DGrB}(g)$ except that the pairs consist of any two finitely generated graded $B$-modules as opposed to finitely generated projective graded $B$-modules.  Since $B$ is not regular in our case, this distinction is important.  However, the relevant property that
\begin{equation}
[2] \cong (\op{deg} g) = (3)
\end{equation}
still holds by definition and hence $\op{D}^{\op{abs}}[\mathsf{Fact}(B, \Z, g)]$ is a 3-dimensional Calabi-Yau category.

Furthermore, Positselski establishes an equivalence,
\[
\op{D}^{\op{abs}}[\mathsf{Fact}(B, \Z, g)] \cong \op{D}^{\op{b}}(\op{mod}-B/g)/ \op{D}^{\op{b}}(\op{mod}-B).
\]
Notice that
\[
\op{D}^{\op{gr}}_{\op{sg}}(R/(f,g)) =  \op{D}^{\op{b}}(\op{mod}-R/(f,g)) /  \op{D}^{\op{b}}(\op{mod}-R).
\]
and hence  $\op{D}^{\op{abs}}[\mathsf{Fact}(B, \Z, g)]$ is a Verdier quotient of $\op{D}^{\op{gr}}_{\op{sg}}(R/(f,g))$ by some triangulated subcategory, $\mathcal N \subseteq \op{D}^{\op{gr}}_{\op{sg}}(R/(f,g))$.  From Orlov's theorem, it follows that $\op{D}^{\op{abs}}[\mathsf{Fact}(B, \Z, g)]$ is a Verdier quotient of $\dbcoh{X}$ by the full triangulated subcategory generated by $\mathcal N$ and $\O(-3), \O(-2)$ and $\O(-1)$.

Hence, we arrive at the following result
\begin{proposition} \label{prop: final case}
Let $X_{2.3} \subseteq \PP^7$ be a smooth complete intersections defined by a quadric, $f$, and a cubic, $g$, in $R := k[x_0, ..., x_7]$.  There is a surjective homomorphism of rational vector  spaces:
\[
{\rm Griff}_{\QQ}(X) \to {\rm Griff}_{\QQ}(\op{D}^{\op{abs}}[\mathsf{Fact}(B, \Z, g)]).
\]
Furthermore  $\op{D}^{\op{abs}}[\mathsf{Fact}(B, \Z, g)]$ is a $3$-dimensional Calabi-Yau category.
\end{proposition}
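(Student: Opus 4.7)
The plan is to assemble the Verdier-quotient description already sketched in the paragraphs immediately preceding the statement, and then invoke the definition of the Griffiths group of a Verdier quotient.

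For the Calabi--Yau assertion, the plan is to mirror the argument of Lemma~\ref{lem: strange subcategory}. By the construction of Positselski's absolute derived factorization category, the $\Z$-grading on $B$ together with $\op{deg} g = 3$ yields the periodicity isomorphism $[2] \cong (3)$. Since $R = k[x_0,\ldots,x_7]$ is a polynomial ring in $8$ variables and $f$ is a quadric regular on $R$, the quotient $B = R/f$ is Gorenstein of injective dimension $7$ and Gorenstein parameter $8-2 = 6$. The same Serre-functor computation as in Lemma~\ref{lem: strange subcategory} then gives $S \cong (-6)[7]$, and combining with $(3) \cong [2]$ yields $S \cong [-4][7] = [3]$, which is the Calabi--Yau-$3$ property.

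For the surjectivity, the plan is to combine three inputs: (i) Positselski's equivalence
\[
\op{D}^{\op{abs}}[\mathsf{Fact}(B, \Z, g)] \cong \op{D}^{\op{b}}(\op{mod}\text{-}B/g)\,/\,\op{D}^{\op{b}}(\op{mod}\text{-}B)
\]
recalled in the text; (ii) the standard presentation of the graded singularity category,
\[
\op{D}^{\op{gr}}_{\op{sg}}(R/(f,g)) = \op{D}^{\op{b}}(\op{mod}\text{-}R/(f,g))\,/\,\op{D}^{\op{b}}(\op{mod}\text{-}R);
\]
and (iii) Orlov's semi-orthogonal decomposition $\dbcoh{X_{2.3}} = \langle \O(-3), \O(-2), \O(-1), \op{D}^{\op{gr}}_{\op{sg}}(R/(f,g)) \rangle$ supplied by Theorem~\ref{thm: Orlov}. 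Since $B/g = R/(f,g)$, the inclusion $\op{D}^{\op{b}}(\op{mod}\text{-}R) \hookrightarrow \op{D}^{\op{b}}(\op{mod}\text{-}B)$ exhibits (i) as a further Verdier quotient of (ii); that is, it identifies a thick subcategory $\mathcal N \subseteq \op{D}^{\op{gr}}_{\op{sg}}(R/(f,g))$ with $\op{D}^{\op{abs}}[\mathsf{Fact}(B, \Z, g)] \cong \op{D}^{\op{gr}}_{\op{sg}}(R/(f,g))/\mathcal N$. Composing with (iii) presents the target as a Verdier quotient $\dbcoh{X_{2.3}}/\mathcal M$, where $\mathcal M$ is the thick subcategory of $\dbcoh{X_{2.3}}$ generated by $\mathcal N$ together with $\O(-3), \O(-2), \O(-1)$. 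The desired surjection of rational Griffiths groups is then immediate from the very definition given in \S\ref{sec: admissible}.

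The main item requiring care will be verifying that the iterated Verdier localizations compose as claimed. Specifically, one should confirm that the essential image of $\op{D}^{\op{b}}(\op{mod}\text{-}B)$ inside $\op{D}^{\op{b}}(\op{mod}\text{-}R/(f,g))$ descends to a genuine thick subcategory $\mathcal N$ of $\op{D}^{\op{gr}}_{\op{sg}}(R/(f,g))$ after quotienting by $\op{D}^{\op{b}}(\op{mod}\text{-}R)$, and then that pulling this back through Orlov's decomposition indeed yields a well-defined thick subcategory of $\dbcoh{X_{2.3}}$. This is largely formal bookkeeping, but deserves attention because $g$ is regular on $B$ but not on $R$, so the two module categories enter the comparison asymmetrically.
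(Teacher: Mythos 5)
Your proposal is correct and follows essentially the same route as the paper: the paper's proof of Proposition~\ref{prop: final case} is literally ``this follows immediately from the previous discussion,'' and that discussion consists of exactly the three ingredients you assemble (Positselski's equivalence, the presentation of $\op{D}^{\op{gr}}_{\op{sg}}(R/(f,g))$ as a Verdier quotient, and Orlov's semi-orthogonal decomposition), combined with the definition of the Griffiths group of a Verdier quotient, while the Calabi--Yau property is obtained exactly as in Lemma~\ref{lem: strange subcategory} from the Gorenstein parameter $6$ of $B=R/f$ and the periodicity $(3)\cong[2]$. The only difference is that you explicitly flag the bookkeeping of the iterated localizations, which the paper leaves implicit.
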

\begin{proof}
This follows immediately from the previous discussion.
\end{proof}

We summarize our results
\begin{theorem}
With the notation above, suppose $X$ is a general smooth Fano-Calabi-Yau complete intersection in weighted projective space.  The Griffiths group,
\[
 {\rm Griff}_{\QQ}(X) =  {\rm Griff}_{\QQ}(\op{D}^{\op{gr}}_{\op{sg}}(A) ),
\]
is infinitely generated.  Furthermore, when $X$ is a cubic $7$-fold or a hypersurface of degree $4$ in $\PP^6(1^6;2)$, then  $\op{D}^{\op{gr}}_{\op{sg}}(A)$ is an admissible $3$-dimensional Calabi-Yau subcategory of $\dbcoh{X}$.
  When $X$ is an intersection of a quadric, $f$, and a cubic, $g$, then ${\rm Griff}_{\QQ}(X)$ surjects onto  $ {\rm Griff}_{\QQ}(\op{D}^{\op{abs}}[\mathsf{Fact}(B, \Z, g)])$ and  $\op{D}^{\op{abs}}[\mathsf{Fact}(B, \Z, g)]$ is a $3$-dimensional Calabi-Yau category.
 \end{theorem}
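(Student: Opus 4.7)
The statement is a synthesis of results that have already been established individually in the preceding sections, so my proof plan is essentially one of bookkeeping: I would verify that each of the three assertions in the theorem is a direct consequence of a named result from the excerpt.

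First, for the identification ${\rm Griff}_{\QQ}(X) = {\rm Griff}_{\QQ}(\op{D}^{\op{gr}}_{\op{sg}}(A))$, I would observe that each of the three FCY spaces under consideration (the cubic $7$-fold $X_3$, the hypersurface $X_4 \subseteq \PP^6(1^6;2)$, and the $(2,3)$-intersection $X_{2.3} \subseteq \PP^7$) is smooth projective Gorenstein with $\omega_X \cong \mathcal{L}^{-r}$ for $\mathcal{L}$ the restriction of the ample generator (the hyperplane class in the ordinary projective cases and $\mathcal O(1)$ in the weighted case), where $r = 6, 4, 3$ respectively. Thus Corollary~\ref{cor: Griffiths groups relation} applies verbatim and yields the claimed isomorphism of rational Griffiths groups.

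Second, for the infinite generation, I would assemble three independent references: the result of Albano and Collino \cite{AC} handles the cubic $7$-fold $X_3$, Theorem~\ref{thm: infinitely generated case 1} handles $X_4$, and Theorem~\ref{thm: infinitely generated case 2} handles $X_{2.3}$. These are the genuinely difficult inputs, and all three follow the same Voisin strategy of producing infinitely many triples $(\lambda_{a,1},Y,X)$ on the Fermat member of the family whose Griffiths infinitesimal invariants $\delta\nu_{\lambda_{a,1}}$ are linearly independent over $\QQ$ after specializing an auxiliary parameter to a transcendental number.

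Third, for the Calabi-Yau and admissibility claims in the cases $X_3$ and $X_4$, Orlov's Theorem~\ref{thm: Orlov}(1) furnishes the semi-orthogonal decomposition
\[
\dbcoh{X} = \langle \mathcal O(-r), \ldots, \mathcal O(-1), \op{D}^{\op{gr}}_{\op{sg}}(A) \rangle,
\]
so Lemma~\ref{lem: SOD admissible} gives admissibility of $\op{D}^{\op{gr}}_{\op{sg}}(A)$ inside $\dbcoh{X}$, while Lemma~\ref{lem: first two cases} gives the $3$-dimensional Calabi-Yau property via the explicit Serre-functor computation $(-a)[n-1]$ combined with the periodicity $(\op{deg} w)=[2]$. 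Finally, for $X_{2.3}$ the surjectivity of rational Griffiths groups onto the category $\op{D}^{\op{abs}}[\mathsf{Fact}(B, \Z, g)]$ is Proposition~\ref{prop: final case}, and the same proposition together with Lemma~\ref{lem: strange subcategory} records the $3$-dimensional Calabi-Yau property.

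The main obstacle has already been overcome inside the body of the paper, and lies entirely in Sections~\ref{x4} and~\ref{x23}: namely, the combinatorial/Jacobian-ring analysis required to verify Voisin's hypotheses on the Fermat member, to build an isomorphism $f_a = P_{\lambda_{a,1}} \cdot e$ at the level of graded rings, and to check that the resulting sequence $\delta(a_i)$ generates an infinite-dimensional $\QQ$-subspace. No additional technical machinery beyond the results already quoted is needed for the summary theorem itself; the final proof is therefore a short paragraph of citations, with the case $X_{2.3}$ requiring the extra caveat that one obtains only a surjection rather than an isomorphism, because the ambient category $\op{D}^{\op{abs}}[\mathsf{Fact}(B, \Z, g)]$ fails to be admissible in $\dbcoh{X}$ and must be treated via Positselski's absolute derived framework.
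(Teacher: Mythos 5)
Your proposal is correct and follows essentially the same route as the paper: the identification of Griffiths groups via Orlov's theorem and the exceptional-object corollary, infinite generation from \cite{AC} together with Theorems~\ref{thm: infinitely generated case 1} and~\ref{thm: infinitely generated case 2}, the Calabi--Yau/admissibility claims from Lemma~\ref{lem: first two cases}, and the $X_{2.3}$ surjection from Proposition~\ref{prop: final case}. The paper's own proof is exactly this assembly of citations, so nothing further is needed.
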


\begin{proof}
By Corollary~\ref{cor: exceptional Griffiths}, we get the equality,
\[
 {\rm Griff}_{\QQ}(X) =  {\rm Griff}_{\QQ}(\op{D}^{\op{gr}}_{\op{sg}}(A) ).
\]
The fact that it is infinitely generated is a combination of the main result of \cite{AC} and Theorems~\ref{thm: infinitely generated case 1} and \ref{thm: infinitely generated case 2}.

The statement that when $X$ is a cubic $7$-fold or a hypersurface of degree $4$ in $\PP^6(1^6;2)$, then  $\op{D}^{\op{gr}}_{\op{sg}}(A)$ is an admissible $3$-dimensional Calabi-Yau subcategory of $\dbcoh{X}$ is Lemma~\ref{lem: first two cases}.

 When $X$ is an intersection of a quadric, $f$, and a cubic, $g$, the statement is a consequence of Proposition~\ref{prop: final case}
\end{proof}

\begin{remark}
It was pointed out to the authors by M. Kontsevich and B. T\"oen, that using work of T\"oen and Vessosi, one can define the total rational Griffiths group of a saturated dg-category.  This is the kernel of the map from semi-topological K-theory to Hochschild homology tensored over $\Q$.  This definition reduces to our definition in the case of a thick triangulated subcategory of $\dbcoh{X}$.  It would be interesting to see if one can generalize Voisin's result to an open subset of the moduli space of saturated $3$-dimensional Calabi-Yau dg-categories and obtain all FCY manifolds as a special case.
\end{remark}

\medskip

\section{Categorical covers and Griffiths groups} \label{sec: categorical covers}
Suppose $M$ and $N$ are finitely generated abelian groups of rank one and $\phi: M \to N$ is a surjective homomorphism with finite kernel.
When considering the categories, $\op{DGrB}(w, M)$ and $\op{DGrB}(w,N)$, the following abstract situation occurs  (see \cite{BFK11} for a more complete discussion). 


 One has two triangulated categories $ \mathcal T$ and $\mathcal S$, and there is a finite group $\Gamma$ (resp. $\Gamma'$) of autoequivalences of $\mathcal T$ (resp. $\mathcal S$) given for each $\gamma \in \Gamma$ by  $\Upsilon^{\gamma}_{\mathcal T} $ (resp. $\Upsilon^{\gamma'}_{\mathcal S}$).
 Furthermore, there are functors,
\[
F: \mathcal S \to \mathcal T \tand G: \mathcal T \to \mathcal S
\]
satisfying
\[
G \circ F = \bigoplus_{\gamma \in \Gamma} \Upsilon_{\mathcal S} \tand F \circ G = \bigoplus_{\gamma' \in \Gamma'} \Upsilon_{\mathcal T}.
\]

Now suppose that $i_{\mathcal S}: \mathcal S \to \dbcoh{X} $ and $i_{\mathcal T} : \mathcal T \to \dbcoh{Y}$ are admissible subcategories.
Let $P_\mathcal S$ (resp. $P_\mathcal T$) be the projection functor onto $\mathcal S$ (resp $\mathcal T$) with respect to the semi-orthogonal decomposition
\[
\dbcoh{X} = \langle \mathcal S, \text{}^{\perp}\mathcal S \rangle (\text{resp. }\dbcoh{Y} = \langle \mathcal T, \text{}^{\perp}\mathcal T \rangle).
\]
Composing with inclusions and projections, all the functors above can be thought of as functors between some choices of $\dbcoh{X}$ and $\dbcoh{Y}$.  Assume further that all these functors are represented by integral transforms.

In this situation, we obtain an isomorphism between the $\Gamma$-invariant and $\Gamma'$-invariant Griffiths groups,
\begin{equation} \label{covering equation}
 {\rm Griff}_{\QQ}(\mathcal S)^{\Gamma} \to  {\rm Griff}_{\QQ}(\mathcal T)^{\Gamma'},
\end{equation}
with respect to the action of the Griffiths integral transforms induced by the $\Upsilon^{\gamma}_{\mathcal S}$ and $\Upsilon^{\gamma'}_{\mathcal T} $ respectively. 

Now suppose $M$ and $N$ are finitely generated abelian groups of rank one and $\phi: M \to N$ is a surjective homomorphism with finite kernel. Let $B$ be an $M$-graded polynomial algebra.  The map $\phi$ induces an $N$-grading on $B$ as well.
We can let $\mathcal S = \op{DGrB}(w, M)$, $\mathcal T =  \op{DGrB}(w, N)$, $\Gamma= \op{ker }\phi$, and $\Gamma' = \mathbb{G}_{\op{ker }\phi}$ be the dual group.  
For $\gamma' \in \Gamma'$ there is an action on $B$ which acts on a homogeneous element $b \in B_n$ for $n \in N$ by $\gamma'(b) = \gamma'(n) \cdot b$. 
Furthermore we can set $\Upsilon^{\gamma}_{\mathcal T} = (\gamma)$ and  $\Upsilon^{\gamma'}_{\mathcal S} = \op{Id}_\mathcal S$.  One of the central aspects of the work in \cite{BFK11} is that it guarantees that all functors in question are always represented by integral transforms.  
\begin{proposition}
Let $M$ and $N$ be finitely generated abelian groups of rank one and $\phi: M \to N$ be a surjective homomorphism with finite kernel and $B$ be an $M$-graded polynomial algebra.  Suppose that $\op{DGrB}(w, M), \op{DGrB}(w, N)$ are admissible subcategories of $\dbcoh{X}$ and $\dbcoh{Y}$ respectively where $X$ and $Y$ are smooth proper algebraic varieties over $k$. There is an isomorphism,
\[
 {\rm Griff}_{\QQ}(\op{DGrB}(w, M))^{\op{ker }\phi} \to  {\rm Griff}_{\QQ}(\op{DGrB}(w, N))^{\op{ker }\phi}.
\]
\end{proposition}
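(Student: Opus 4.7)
The plan is to apply the abstract categorical covering framework described in the paragraphs just preceding the proposition and to verify that its hypotheses hold in the present setup; once that is done, the isomorphism \eqref{covering equation} produces the conclusion. Concretely, I set $\mathcal{S} = \op{DGrB}(w,M)$ and $\mathcal{T} = \op{DGrB}(w,N)$, with $\Gamma = \op{ker}\phi$ acting on $\mathcal{T}$ via the grading shifts $(\gamma)$ for $\gamma \in \op{ker}\phi \subseteq M$ (these act trivially after coarsening the $M$-grading, but nontrivially on $M$-graded factorizations once one has induced back up) and $\Gamma' = \mathbb{G}_{\op{ker}\phi}$ acting on $\mathcal{S}$ through the characters of $\op{ker}\phi$, exactly as in the excerpt just above the proposition.

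The functors $F:\mathcal{S}\to\mathcal{T}$ and $G:\mathcal{T}\to\mathcal{S}$ required by the framework are then the natural restriction/coarsening and induction functors associated with the surjection $\phi$. The functor $F$ coarsens an $M$-graded factorization of $w$ to an $N$-graded one by summing along the fibers of $\phi$, which is well defined because the differential has $M$-degree $\op{deg}_M w$ and $\phi(\op{deg}_M w)=\op{deg}_N w$. The functor $G$ sends an $N$-graded factorization $(P,p)$ to $\bigoplus_{\gamma\in\op{ker}\phi}(P(\gamma),p)$, placing the components in the $M$-degrees given by a set-theoretic section of $\phi$ and shifting by the finite kernel; the resulting object is independent of the chosen section up to canonical isomorphism.

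The two composition identities $G\circ F\cong\bigoplus_{\gamma'\in\Gamma'}\Upsilon^{\gamma'}_{\mathcal{S}}$ and $F\circ G\cong\bigoplus_{\gamma\in\Gamma}\Upsilon^{\gamma}_{\mathcal{T}}$ then reduce to a direct bookkeeping at the level of graded modules: inducing an $N$-coarsened module back up produces a sum indexed by $\op{ker}\phi$ of its $M$-grading shifts, while the dual composition recovers the regular representation of $\mathbb{G}_{\op{ker}\phi}$ via the Fourier decomposition of a finite abelian group. Substituting these into the abstract framework and invoking \eqref{covering equation} gives the isomorphism
\[
{\rm Griff}_{\QQ}(\mathcal{S})^{\Gamma'}\;\cong\;{\rm Griff}_{\QQ}(\mathcal{T})^{\Gamma},
\]
which, after the (non-canonical) identification of $\op{ker}\phi$ with its Pontryagin dual $\mathbb{G}_{\op{ker}\phi}$, is precisely the stated conclusion.

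The main obstacle will be showing that every functor in sight is represented by an honest integral transform on the ambient $\dbcoh{X\times X}$ and $\dbcoh{Y\times Y}$, so that the framework actually applies to the admissible embeddings $\mathcal{S}\hookrightarrow\dbcoh{X}$ and $\mathcal{T}\hookrightarrow\dbcoh{Y}$. For the shift autoequivalences $\Upsilon^{\gamma}_{\mathcal{T}}$ and $\Upsilon^{\gamma'}_{\mathcal{S}}$ this is straightforward, since their kernels are twisted diagonals. For the coarsening and induction functors $F$ and $G$, representability is precisely the technical core of \cite{BFK11}, where integral-transform kernels for grading-change functors between graded categories of factorizations are constructed in exactly the setup of a surjection $\phi:M\to N$ with finite kernel. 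Quoting that result trivializes the remaining step: the two composition identities above, having been verified on objects, extend to isomorphisms of kernels in $\dbcoh{X\times X}$ and $\dbcoh{Y\times Y}$ by the uniqueness clause of Proposition~\ref{prop: projection kernel}, completing the verification of the hypotheses of \eqref{covering equation}.
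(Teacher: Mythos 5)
Your proposal is correct and follows essentially the same route as the paper, which offers no separate proof of this proposition and instead treats it as a direct instance of the abstract covering framework set up in the immediately preceding paragraphs, with $F$ and $G$ the grading coarsening/induction functors and with \cite{BFK11} cited to guarantee that all functors involved are represented by integral transforms so that \eqref{covering equation} applies. Your write-up merely makes explicit the bookkeeping (the decomposition of $G\circ F$ and $F\circ G$ and the identification of $\op{ker}\phi$ with its dual) that the paper leaves implicit.
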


\begin{example} \label{3 curves}
Let $M=(\Z \oplus \Z \oplus \Z)/\langle(3,-3,0), (3,0,-3) \rangle$ and $B = k[x_0, ..., x_8]$ be the $M$-graded algebra where,
\[
\op{deg}(x_i)  = 
\begin{cases}
 (1,0,0) & \text{ if } 0 \leq i \leq  2,  \\
(0,1,0) & \text{ if } 3 \leq i \leq 5, \\
(0,0,1) & \text{ if } 6 \leq i \leq 8. \\
 \end{cases}
\]
  Consider smooth elliptic curves, $E_1,E_2,E_3$ defined by $f_3(x_0,x_1,x_2), g_3(x_3,x_4,x_5),$ and $h_3(x_6,x_7,x_8)$.  Set $w=f_3+g_3+h_3$.
By \cite{BFK11} there is an equivalence of categories,
\[
\dbcoh{E_1 \times E_2 \times E_3} \cong \op{DGrB}(w, M).
\]
Let $N = \Z$ and $\phi$ be the summing map.  The kernel of $\phi$ is $\Z_3 \oplus \Z_3$, a finite group of order $9$ generated by $(1,-1,0)$ and $(1,0,-1)$.  By Theorem~\ref{thm: Orlov} $\op{DGrB}(w, N)$ is an admissible subcategory of $\dbcoh{X_3}$ where $X_3$ is the cubic sevenfold defined by $w$.  Hence, by Corollary~\ref{cor: Griffiths groups relation} we obtain:
\[
 {\rm Griff}_{\QQ}(E_1 \times E_2 \times E_3)^{\Z_3 \oplus \Z_3} \cong  {\rm Griff}_{\QQ}(X_3)^{\Z_3 \oplus \Z_3}.
\]
\end{example}


\begin{example} \label{fourfold by curve}
Let $M=(\Z \oplus \Z )/\langle(3,-3) \rangle$ and $B = k[x_0, ..., x_8]$ be the $M$-graded algebra where,
\[
\op{deg}(x_i)  = 
\begin{cases}
 (1,0) & \text{ if } 0 \leq i \leq  5,  \\
(0,1) & \text{ if } 6 \leq i \leq 8. \\
 \end{cases}
\]
Let $Z$ be a smooth cubic-fourfold by $f_3(x_0, ..., x_5)$ and $E$ be a smooth elliptic curve defined by $h_3(x_6,x_7,x_8)$.  Set $w=f+h$.
Suppose further that $\op{DGrR}(f) \cong \dbcoh{Y}$ for some smooth $K3$ surface, $Y$, where $R=k[x_0, ..., x_5]$.\footnote{This occurs, for example, when $f$ is Pffafian or contains a plane $P$ and a 2-dimensional cycle $T$ such that $T \cdot H^2 - T \cdot P$ is odd \cite{Kuz09b}.}
By \cite{BFK11} there is an equivalence of categories,
\[
\dbcoh{Y \times E} \cong \op{DGrB}(w, M).
\]
Let $N = \Z$ and $\phi$ be the summing map.  The kernel of $\phi$ is $\Z_3$, the finite group of order $3$ generated by $(1,-1)$.  By Theorem~\ref{thm: Orlov} $\op{DGrB}(w, N)$ is an admissible subcategory of $\dbcoh{X_3}$ where $X_3$ is the cubic sevenfold defined by $w$.  Hence, by Corollary~\ref{cor: Griffiths groups relation} we obtain:
\[
 {\rm Griff}_{\QQ}(Y \times E)^{\Z_3} \cong  {\rm Griff}_{\QQ}(X_3)^{\Z_3}.
\]
\end{example}

\begin{example} \label{K3 by curve}
Let $M=(\Z \oplus \Z )/\langle(2,-4) \rangle$ and $B = k[x_0, ..., x_6]$ be the $M$-graded algebra where,
\[
\op{deg}(x_i)  = 
\begin{cases}
 (1,0) & \text{ if } 0 \leq i \leq  3,  \\
(0,1) & \text{ if } 4 \leq i \leq 5. \\
(0,2) & \text{ if } i=6. \\
 \end{cases}
\]
Let $Y$ be a smooth quartic $K3$-surface defined by $f_4(x_0, ..., x_3)$ and $E$ be a smooth elliptic curve defined by $h_4(x_4,x_5,x_6)$ in $\P(1:1:2)$.  Set $w=f_4+h_4$.
By \cite{BFK11} there is an equivalence of categories,
\[
\dbcoh{Y \times E} \cong \op{DGrB}(w, M).
\]
Let $N = \Z$ and $\phi$ be the summing map.  The kernel of $\phi$ is $\Z_2$, the finite group of order $2$ generated by $(1,-2)$.  By Theorem~\ref{thm: Orlov} $\op{DGrB}(w, N)$ is an admissible subcategory of $\dbcoh{X_4}$ where $X_4$ is the smooth hypersurface in $\P(1^6;2)$ defined by $w$.  Hence, by Corollary~\ref{cor: Griffiths groups relation} we obtain:
\[
 {\rm Griff}_{\QQ}(Y \times E)^{\Z_2} \cong  {\rm Griff}_{\QQ}(X_4)^{\Z_2}.
\]
\end{example}

In light of these examples, let us propose the following conjectures:
\begin{conjecture}
For the general member of each of the families above, the invariant Griffiths groups,
\[
 {\rm Griff}_{\QQ}(X_3)^{\Z_3 \oplus \Z_3} \tand  {\rm Griff}_{\QQ}(X_4)^{\Z_2} 
\]
are infinitely generated.
\end{conjecture}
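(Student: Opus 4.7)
The plan is to adapt the Voisin--Albano-Collino infinitesimal method in a $G$-equivariant fashion and produce infinitely many linearly independent classes inside the invariant Griffiths group. Because $G$ is finite, the averaging projector $\pi_G := \frac{1}{|G|}\sum_{g\in G} g^*$ is a well-defined idempotent on ${\rm Griff}_{\QQ}(X)$ with image ${\rm Griff}_{\QQ}(X)^G$, so it suffices to exhibit an infinite family of classes produced by the methods of \S\ref{x4} (or \cite{AC}) whose $\pi_G$-images remain linearly independent.

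First, I would restrict the deformation-of-triples picture of \S\ref{lyx} and \S\ref{xyl} to the $G$-invariant deformation subspace $\cX^G \subseteq \cX$. For $X_3 = V(f_3+g_3+h_3)$ this consists of deformations of the form $V(\tilde f_3 + \tilde g_3 + \tilde h_3)$; for $X_4$ it consists of deformations of the product form used in Example \ref{K3 by curve}. Both invariant deformation subspaces are positive-dimensional, hence contain directions along which to differentiate normal functions. Given a triple $(\lambda,Y,X)$ from \S\ref{x4} or from \cite{AC}, I would form the averaged cycle $\bar Z_\lambda := \frac{1}{|G|}\sum_{g\in G} g_* Z_\lambda$, which defines a $G$-invariant class in ${\rm Griff}_{\QQ}(X)$. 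Its infinitesimal invariant, viewed as a linear functional on the restriction of $\ker \overline{\nabla}$ to $T\cX^G$, is the average of the individual infinitesimal invariants of the $g_*Z_\lambda$, and Voisin's formulas (Lemmas \ref{voisin-1} and \ref{voisin-2}) still compute each summand in terms of polynomial multiplications in the graded rings of the (possibly $G$-translated) auxiliary divisors.

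The hard part will be verifying that this averaging does not collapse the infinite family $\{\lambda_{a,1}\}$ of Theorem \ref{thm: infinitely generated case 1} (or the analogous family in \cite{AC}) to a finite-dimensional subspace. The cleanest way around this is to pick the auxiliary polynomials $P_{\lambda_{a,b}}$ and the form $e$ directly inside the $G$-invariant part of the relevant Jacobian ring. The Shioda-type decompositions of $H^{n+1,n+1}_o(Y)$ used in \S\ref{x4} and in \cite{AC} provide explicit character bases, and the $G$-action permutes these bases in a way determined by its weights on the ambient variables. Since $G$ is small ($\Z_3\oplus\Z_3$ or $\Z_2$), direct inspection of the weight tables should show that the $G$-invariant subspace is still large enough to support both the isomorphism hypothesis of Lemma \ref{voisin-1} (and its cubic analogue for $X_3$) and a one-parameter family of invariant classes $\lambda^G_{a,1}$ whose infinitesimal invariant $\delta^G(a)$ retains a transcendental number $h$ in the denominator. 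Once this is arranged, the linear-independence argument of Theorem \ref{thm: infinitely generated case 1} (modeled on Theorem 4.2 of \cite{AC}) applies verbatim with $\delta(a)$ replaced by $\delta^G(a)$, and Lemma \ref{dnu-non-zero} delivers infinitely many non-torsion $G$-invariant classes, proving infinite generation of both ${\rm Griff}_{\QQ}(X_3)^{\Z_3\oplus\Z_3}$ and ${\rm Griff}_{\QQ}(X_4)^{\Z_2}$.
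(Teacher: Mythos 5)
The statement you are addressing is stated in the paper only as a conjecture: the authors give no proof, so there is no argument of theirs to compare yours against, and your proposal must be judged as a standalone program. As such it identifies the right ingredients but has a genuine gap at its core. The theorems actually proved (Theorem~\ref{thm: infinitely generated case 1}, Theorem~\ref{thm: infinitely generated case 2}, and the Albano--Collino result) concern the general member of the \emph{full} deformation space $\cX$, whereas the conjecture concerns the general member of the \emph{split} families of Examples~\ref{3 curves} and~\ref{K3 by curve}, which form a proper closed subvariety $\cX^G \subseteq \cX$; a general point of $\cX$ does not lie in $\cX^G$, so nothing proved in the paper transfers formally. Every step of the Voisin machinery must therefore be re-established relative to $\cX^G$: Proposition~\ref{lyx-nl} must be replaced by the statement that the Noether--Lefschetz components dominate $\cX^G$ (surjectivity of $p_*$ onto $T\cX^G$ at a point of $\cX^G$ is a new hypothesis, not a consequence of the isomorphism onto $T\cX$), and Lemma~\ref{dnu-non-zero} only produces non-torsion classes on the general $X_t \in \cX$, so one needs a version of that lemma for the subfamily, which requires nonvanishing of $\delta\nu_\lambda$ on $\ker\overline{\nabla}$ restricted to invariant tangent directions. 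You acknowledge the last point but not the first two.

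The second gap is that the explicit computations on which everything rests are not carried out, and the reference varieties of \S\ref{x4} and \S\ref{x23} are not obviously compatible with the split families: for Example~\ref{K3 by curve} the potential is $f_4(x_0,\dots,x_3)+h_4(x_4,x_5,x_6)$ with $x_6$ of weight $2$, so the branch quartic, the auxiliary divisor $Y$, and its Jacobian ring differ from those used to prove Theorem~\ref{thm: infinitely generated case 1}. Constraining $P_{\lambda}$ and $e$ to be $G$-invariant changes the matrices $M_{a,b}$ of Lemma~\ref{voisin-1}, and whether the determinant remains generically nonzero and whether the resulting $\delta^G(a)$ still takes infinitely many $\QQ$-linearly independent values are exactly the computations that would constitute a proof; "direct inspection should show" is where the conjecture remains open. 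Finally, the invariants in the conjecture are taken with respect to the action induced by the Griffiths integral transforms of the autoequivalences $\Upsilon^{\gamma'}$ from \S\ref{sec: categorical covers}; you should verify that this agrees with the pushforward action $g_*$ you average over, since your projector $\pi_G$ only lands in the invariants for the action it is built from.
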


From the examples above, it follows that the product of three general elliptic curves, $E_1 \times E_2 \times E_3$, is infinitely generated as well as the product of a general quartic $K3$-surface and a general elliptic curve in $\P(1:1:2)$.

\bigskip


\bigskip


\end{document}